\def \dt {{\,\mathrm dt}}
\def \d {{\,\mathrm d}}
\def \dtau {{\,\mathrm d\tau}}
\def \dtheta {{\,\mathrm d\vartheta}}
\def \RR {{\mathbb R}}
\def \ZZ {{\mathbb Z}}
\newcommand{\GReU}[3]{\Gamma}
\def \al {\alpha}
\def \ep {\varepsilon}
\def \eps {\varepsilon}
\def \vt {\vartheta}
\def \vp {\varphi}
\def \action{\mathcal{A}}
\def \ell{\mathcal{E}}
\def \cont{\mathcal{C}}
\def \morse{\mathcal{M}}
\def \pjump {\Delta_{\mathrm{pos}}}
\def \vjump {\Delta_{\mathrm{vel}}}
\newcommand{\Uh}{\mathcal{U}}
\newcommand{\Vh}{\mathcal{V}}
\newcommand{\cotan}{\mathrm{cotan}}
\newtheorem{theorem}{Theorem}[section]
\newtheorem{lemma}[theorem]{Lemma}
\newtheorem{definition}[theorem]{Definition}
\newtheorem{proposition}[theorem]{Proposition}
\newtheorem{remark}[theorem]{Remark}
\newtheorem{corollary}[theorem]{Corollary}
\title{\sc Entire minimal\\ parabolic trajectories:\\ the planar anisotropic\\ Kepler problem}
\date{}
\author{%
Vivina Barutello\footnote{Dipartimento di Matematica, Universit\`a degli Studi
di Torino, Via Carlo Alberto, 10,  10123 Torino,
Italy. e-mail: \texttt{vivina.barutello@unito.it}}
\and
Susanna Terracini\footnote{Dipartimento di Matematica e Applicazioni, Universit\`a degli Studi
di Milano-Bicocca, Via Bicocca degli Arcimboldi, 8, 20126 Milano,
Italy. e-mail: \texttt{susanna.terracini@unimib.it}}
\and
Gianmaria Verzini\footnote{Dipartimento di Matematica, Politecnico di Milano, Piazza
Leonardo da Vinci, 32,  20133 Milano, Italy.
e-mail: \texttt{gianmaria.verzini@polimi.it}}
}
\begin{document}
\maketitle

\begin{abstract}
We continue the variational approach to parabolic
trajectories introduced in our previous paper \cite{BTV}, which sees parabolic orbits as
minimal phase transitions.

We  deepen and complete the analysis in the planar case for homogeneous singular potentials. We
characterize all parabolic orbits connecting
two minimal central configurations as free-time Morse minimizers (in a given homotopy
class of paths). These may occur for at most one value of the homogeneity exponent.
In addition, we  link this threshold of existence of parabolic
trajectories with the absence of collisions for all the minimizers of fixed-ends
problems. Also the existence of action minimizing periodic trajectories with nontrivial
homotopy type  can be related with the same threshold.
\end{abstract}

\section{Introduction and Main Results}\label{sec:intro}

For a positive, singular potential $V \in \cont^2(\RR^d \setminus \{0\})$,
vanishing at infinity, we study the Newtonian system
\begin{equation}\label{eq:dynsys}
\ddot x (t) =\nabla V (x(t)),
\end{equation}
searching for \emph{parabolic} solutions, i.e. entire solutions satisfying the zero-energy relation
\begin{equation}\label{eq:dynsys2}
\frac12 |\dot x(t)|^2 = V(x(t)), \qquad \text{for every } t \in \RR.
\end{equation}
In the Kepler problem ($V(x)=1/|x|$) all global zero-energy trajectories are indeed parabola.
In this paper we are concerned with $(-\alpha)$-homogeneous potentials, with $\alpha \in (0,2)$.
Within this class of potentials, parabolic trajectories are homoclinic to infinity, which represents the minimum
of the potential.

In celestial mechanics, and more in general in the theory of singular hamiltonian
systems, parabolic trajectories play a central role and they are known to carry precious
information on the behavior of general solutions near collisions. On the other hand,
parabolic trajectories are structurally unstable and therefore
are usually considered beyond the range of application of variational or other
global methods.
In spite of this, in our previous paper \cite{BTV}, we introduced a new variational
approach to their existence as
minimal phase transitions.

The purpose of the present paper is to deepen and complete the analysis in the
planar case $d=2$: we will succeed in characterizing all parabolic orbits connecting
two minimal central configurations as free-time Morse minimizers (in a given homotopy
class of paths). In addition, we shall link the threshold of existence of parabolic
trajectories with the absence of collisions for all the minimizers of fixed-ends
problems. Also the existence of action minimizing periodic trajectories with nontrivial
homotopy type  will be related with the same threshold.

In the plane $\RR^2$ we use the polar coordinates $x=(r\cos \vt,r\sin\vt)=(r,\vt)$ (despite the ambiguous notation, it will always be clear from the
context wether a pair denotes either cartesian or polar coordinates).
Under this notation any $(-\alpha)$-homogeneous potential $V$ can be written as
\[
V(x) = \frac{U(\vt)}{r^\alpha},
\]
where
\[
U(\vt) := V(\cos \vt, \sin \vt).
\]
The potential $V$ is then a generalization of the \emph{anisotropic Kepler potential}
(extensively studied for instance in \cite{DevInvMath1978,DevProgMath1981,Gutzwiller73,Gutzwiller77,Gutzwiller81}),
which actually corresponds to the value $1$ of the parameter $\alpha$ and a specific $U$.
For such potentials, it is well known that parabolic trajectories admit in/outgoing asymptotic directions which are necessarily
critical points of $U(\vt)$: these are called \emph{central configurations}. We are mostly interested to parabolic trajectories
connecting two \emph{minimal} central configurations. To be more precise, given
\[
0 \leq \vt_1 \leq \vt_2 < 2\pi,
\]
we define the sets of potentials
\[
\Uh = \Uh_{\vt_1\vt_2} := \left\{ U\in \cont^2(\RR) : \begin{array}{l}
                                    \text{for every } \vt \in \RR \text{ and } i=1,2\\
                                    U(\vt+2\pi)=U(\vt)\\
                                    U(\vt) \geq U(\vt_{1})=U(\vt_{2})>0\\
                                    U''(\vt_{i})>0
                                   \end{array} \right\},
\]
and, with a slight abuse of notation,
\[
\begin{split}
\Vh :=& \left\{ V = (U,\alpha) : U \in \Uh \text{ and } \alpha \in (0,2) \right\} \\
     =& \left\{ V \in \cont^2(\RR^{2}\setminus \{0\}) :
     V(x) = \frac{U(\vt)}{r^{\alpha}},
     U \in \Uh \text{ and } \alpha \in (0,2) \right\}.
\end{split}
\]
For a given $V \in \Vh$, we introduce the action functional
\[
\action(x) = \action([a,b];x):=
\int_{a}^{b} \frac12 |\dot x(t)|^2 + V(x(t)) \, \dt.
\]

In our previous paper \cite{BTV}, we introduced the set of Morse parabolic minimizers
associated to $\action$ and having asymptotic directions
$\xi^- = (\cos \vt_{1},\sin \vt_{1})$ and $\xi^+ = (\cos \vt_{2},\sin \vt_{2})$. Nonetheless, since $\RR^2\setminus\{0\}$ is not simply connected,
as a peculiar fact in the planar case one can also impose a topological constraint
in the form of a homotopy class for the minimizer, for example imposing $h\in\ZZ$ counterclockwise rotations around the origin.
Lifting such a trajectory to the universal covering of $\RR^2\setminus\{0\}$, this corresponds to joining $\vt_1$ with $\vt_2+2h\pi$. Motivated by these considerations, we introduce the set
\[
\Theta = \Theta_{\vt_1\vt_2} := \left\{ \vt \in \RR : \vt = \vt_i +2n\pi
                            \text{ for some } n \in \ZZ \text{ and } i\in\{1,2\} \right\}
\]
and, given $\vt^- \neq \vt^+$ in $\Theta_{\vt_1\vt_2}$ (or, more in general,
$\vt^- \neq \vt^+$ central configurations), we define the following class of paths.
\begin{definition}\label{defi:Morse_min}
We say that $x =(r,\vt) \in H^1_{\mathrm{loc}}(\RR)$ is a
\emph{parabolic trajectory} associated with $\vt^-$, $\vt^+$ and $V$, if it
satisfies equations \eqref{eq:dynsys}, \eqref{eq:dynsys2} and
\begin{itemize}
 \item $\min_{t\in\RR} r(t) >0$;
 \item $r(t)\to+\infty$, $\vt(t)\to\vt^\pm$ as $t\to\pm\infty$;
\end{itemize}
We say that $x$ is a (free time) \emph{parabolic Morse minimizer} if moreover there holds
\begin{itemize}
 \item for every $t_1<t_2$, $t_1'<t_2'$, and $z=(\rho,\zeta)\in H^1(t_1',t_2')$,
 there holds
\[
\rho(t_i')=r(t_i),\ \zeta(t_i')=\vt(t_i),\ i=1,2 \quad\implies\quad \action([t_1,t_2];x)\leq\action([t_1',t_2'];z).
\]
(this last property actually implies \eqref{eq:dynsys}, \eqref{eq:dynsys2}). A fixed time minimizer fulfills the above minimality condition
only with $t_i'=t_i$.
\end{itemize}
\end{definition}
 Under the previous definition the following holds.
\begin{theorem}\label{theo:main}
Let $U \in \Uh$ and $\vt^-, \vt^+ \in \Theta$, $\vt^- \neq \vt^+$ be fixed minimal central configurations; then
\begin{itemize}
\item there exists at most one $\bar \alpha = \bar \alpha
(\vt^-,\vt^+,U)\in(0,2)$ such that $V=(U,\alpha)$ admits  a corresponding
parabolic trajectory associated with $(\vt^-,\vt^+,U)$ if and only if $\alpha = \bar \alpha$;
\item every parabolic trajectory associated with $\vt^-$, $\vt^+$ and $U$ is a free time Morse minimizer;
\item if $|\vt^+ - \vt^-| > \pi$ then there exists exactly one $\bar \alpha$ such that $V=(U,\alpha)$ admits
a corresponding parabolic Morse minimizer if and only if $\alpha = \bar \alpha$.
\end{itemize}
\end{theorem}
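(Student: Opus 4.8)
The plan is to exploit a change of variables that turns zero-energy parabolic trajectories into geodesics for a degenerate Jacobi-type metric. Fixing the homogeneity exponent $\alpha\in(0,2)$ and writing $x=(r,\vt)$, along a zero-energy solution one has $\action([a,b];x)=\int_a^b 2V(x)\dt$ after using \eqref{eq:dynsys2}; reparametrizing by arclength of the Jacobi metric $ds^2 = 2V(x)(dr^2+r^2d\vt^2)$ one sees that parabolic Morse minimizers are precisely the (free-time) length minimizers of this metric joining the two rays $\vt=\vt^\pm$ at infinity, within the prescribed homotopy class encoded by $\vt^-\neq\vt^+\in\Theta$. Because $V=U(\vt)/r^\alpha$ is $(-\alpha)$-homogeneous, the substitution $r=e^{s}$ (or a McGehee-type blow-up) renders the metric a product-type/warped metric on the cylinder $\RR\times\cerchio$, and the parabolic condition translates into $s\to+\infty$ at both ends with $\vt\to\vt^\pm$; the relevant geodesics are then curves minimizing a one-dimensional reduced functional $\Ji$ in $\vt$, of the form $\int \sqrt{U(\vt)}\,\sqrt{c+ |\vt'|^2}$-type, whose value and finiteness depend monotonically on $\alpha$. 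This is the mechanism behind the first bullet of Theorem \ref{theo:main}: the ``balance'' between the radial decay (governed by $\alpha$) and the angular cost $\sqrt{U}$ can be met for at most one $\bar\alpha$.

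Granting the first two bullets of the theorem (a parabolic trajectory exists only for $\alpha=\bar\alpha$, and any such trajectory is automatically a free-time Morse minimizer), the third bullet reduces to an \emph{existence} statement under the extra hypothesis $|\vt^+-\vt^-|>\pi$. First I would set up the direct method: minimize $\action$ over the fixed-ends class of $H^1$ paths from $(r_1,\vt^-)$ to $(r_2,\vt^+)$ in the correct homotopy class, with $r_1,r_2$ large but finite, obtaining a minimizer $x_{r_1,r_2}$ by coercivity (the potential is positive and the kinetic term controls the $H^1$ norm on bounded time intervals once the time is also optimized) and lower semicontinuity. The key issue is that a priori this minimizer might pass through the origin: one must rule out collisions. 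Here the hypothesis $|\vt^+-\vt^-|>\pi$ enters decisively — a colliding path, after the collision, would be free to leave along \emph{any} direction, and one shows that the total angle $>\pi$ forces a genuinely cheaper non-colliding competitor (a ``local rerouting'' estimate comparing the action of a small collision arc with that of an arc of the Jacobi metric that avoids $0$, using that on $\sphere=\cerchio$ the antipodal distance is $\pi$). This is the standard ``Gordon–type'' / Marchal-type obstruction, and I expect it to be available from \cite{BTV} or provable by the local estimates therein; it is exactly the link the introduction advertises between the threshold and the absence of collisions.

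Once collisions are excluded uniformly in $(r_1,r_2)$, I would let $r_1,r_2\to+\infty$ and extract a limit. The uniform interior estimates (again from the zero-energy and minimality structure) give local $C^2$ convergence on compact time intervals to an entire solution $x_\infty$ of \eqref{eq:dynsys}, \eqref{eq:dynsys2}; one then checks $\min r>0$ is preserved (no collision in the limit, by the uniform estimate) and that the asymptotic directions are the prescribed $\vt^\pm$ — this last point uses that any zero-energy entire solution with bounded-below $r$ is parabolic and its asymptotic directions are central configurations, so they can only be $\vt^\pm$ given the homotopy constraint and the fact that $\vt^\pm$ are the \emph{only} minimal central configurations allowed by the construction; monotonicity-type arguments (or the Lagrange–Jacobi identity $\ddot{(|x|^2)} = |\dot x|^2 + \dot x\cdot x \cdot(\text{...}) $, more precisely $\frac{d^2}{dt^2}\frac12|x|^2 = |\dot x|^2 + (2-\alpha)V(x)>0$ using homogeneity) guarantee $r\to+\infty$ at both ends. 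This produces a parabolic Morse minimizer; by the first bullet it exists only for $\alpha=\bar\alpha$, and by a comparison/minimality argument it is unique, giving the ``exactly one'' and ``if and only if'' of the third bullet.

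The main obstacle is the collision exclusion step together with the passage to the limit $r_1,r_2\to\infty$: one must make the local anti-collision estimate quantitative enough to be uniform in the endpoints (so that no collision sneaks in as $r_i\to\infty$) and simultaneously control the minimizers near infinity well enough to identify the asymptotic directions — i.e.\ show the minimizing sequence does not ``lose mass at infinity'' in a way that changes the homotopy class or the asymptotics. I would handle the former via the $|\vt^+-\vt^-|>\pi$ rerouting inequality applied at a fixed small scale, and the latter via the Jacobi-length reformulation, in which the finiteness of the limiting length forces convergence of the angular variable to $\vt^\pm$.
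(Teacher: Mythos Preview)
Your proposal has the right instincts about the Jacobi metric and the role of homogeneity, but it misses the structural mechanism that actually drives the proof, and as written the argument for the third bullet cannot close.

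The paper does not argue via a direct minimization with endpoints pushed to infinity. Instead it uses Devaney's regularization to reduce the zero-energy dynamics to the planar system \eqref{eq:dydthetaphi}, in which parabolic trajectories correspond to saddle--saddle heteroclinics between $(\vt^-,\vt^-+\pi)$ and $(\vt^+,\vt^+)$. The first bullet is then a concrete monotonicity statement: the unstable manifold from $(\vt^-,\vt^-+\pi)$ meets the line $\vp=\vt+\pi/2$ at an angle $\hat\vt^-(\alpha)$, the stable manifold of $(\vt^+,\vt^+)$ meets it at $\hat\vt^+(\alpha)$, and one computes (via the explicit ODE \eqref{eq:ODE_x_vp}) that $\hat\vt^-$ is strictly increasing and $\hat\vt^+$ strictly decreasing in $\alpha$. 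A heteroclinic exists precisely when $\hat\vt^-(\alpha)=\hat\vt^+(\alpha)$, whence uniqueness. Your ``balance can be met for at most one $\bar\alpha$'' is not a proof of this; the monotonicity of the apsidal angles is the actual content. The second bullet is likewise not addressed in your plan; in the paper it follows by comparing this phase-plane characterization with a classification of \emph{constrained} Morse minimizers (paths forced to satisfy $\min r=1$), showing that the unique constrained minimizer has zero position and velocity jump exactly when $\hat\vt^-(\alpha)=\hat\vt^+(\alpha)$, hence coincides with the parabolic trajectory.

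For the third bullet, your scheme has a structural problem: you run the direct method at a \emph{fixed} $\alpha$ and then let $r_1,r_2\to\infty$, but for every $\alpha\neq\bar\alpha$ there is no parabolic trajectory, so the limit cannot be what you claim. What actually happens is that the minimizers degenerate (their minimum radius tends to $0$ or to $\infty$), and your collision-exclusion estimate cannot be uniform in $r_1,r_2$ precisely because it would otherwise manufacture parabolic trajectories for all $\alpha$. The paper avoids this by working instead with constrained minimizers (Definition~\ref{defi:constr_Morse_min}) which exist for \emph{every} $\alpha$; each carries either a position jump or a velocity jump, and the explicit bounds of Lemma~\ref{lem:stimaalpha} show that for $\alpha$ small the jump is positional while for $\alpha$ near $2$ it is in velocity. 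An intermediate-value argument (Lemma~\ref{lem:exist_morse}) then produces the $\bar\alpha$ at which both jumps vanish. The hypothesis $|\vt^+-\vt^-|>\pi$ enters only through these bounds, not through a Gordon-type rerouting inequality. Finally, the case $|\vt^+-\vt^-|>2\pi$ is handled by a conformal change of variables (Lemma~\ref{lem:conf}) reducing to $\pi<|\vt^+-\vt^-|\le 2\pi$, which your plan does not mention.
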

Let us point out that, if $|\vt^+-\vt^-| \leq \pi$, such a number
$\bar\alpha(\vt^-,\vt^+,U)$ may or may not exist depending on the properties of $U$.

To proceed with the description of our results, let us extend the function $\bar\alpha(\vt^-,\vt^+,U)$ to the whole of the possible triplets $(\vt^-,\vt^+,U)\in\Theta\times\Uh$ by setting its value to
zero if there are no parabolic trajectories for any $\alpha$.
This exponent can be related to the presence/absence of collisions for both the fixed time and the free time Bolza problems within the sector defined by the angles $\vt^-$ and $\vt^+$.

The problem of the exclusion of collisions for action minimizing trajectories has nowadays a long history, starting from the first elaborations in the late eighties, e.g. \cite{AC-Z,DegGiaMar1987,DegGiaMar1988,CZelSer1992,CZelSer1994,SerraTer94,Tanak2000} up to the extensive researches of the last decade, mostly motivated by the search of new symmetric collisionless periodic solutions to the $n$--body problem (e.g. \cite{ChMont1999,ChenVen2000,Chen2001, Fer2007}). Starting from the idea of averaged variation by Marchal \cite{Marchal01, chencinerICM02}, later made fully rigorous, extended and refined in \cite{FT2003}, a rather complete analysis of the possible singularities of minimizing trajectories has been recently achieved in \cite{BFT2}. In the literature, minimal parabolic trajectories have been studied in connection
with the absence of collisions for fixed-endpoints minimizers.
More precisely, as remarked by Luz and Maderna in \cite{LuzMad2011},
the property to be collisionless for all Bolza minimizers implies the absence of
parabolic trajectories which are Morse minimal for the usual $n$--body problem with $\alpha=1$. On the contrary,  minimal parabolic arcs (i.e.,
defined only on the half line) exist for every starting configuration,
as proved by Maderna and Venturelli in \cite{MadVen2009}.

A special attention has been devoted to  minimizers subject to topological constraints and
to the existence of trajectories having a particular homotopy type (see e.g.
\cite{Gordon77,Mont1998,AriGazTer2000,Marchal01,TerVen07,Chen2008}). For such constrained
minimizers the averaged variation technique is not available, and other devices have to be
designed to avoid the occurrence of collisions. Starting from \cite{TerVen07}, motivated by
the search of periodic solutions having prism symmetry, a connection has been established
between the apsidal angles of parabolic trajectories and the exclusion of collisions for
minimizers with a given rotation angle. In fact we can  now draw a complete picture of the
role played by the parabolic orbits in the solution of the collision-free minimization
problem with fixed ends.
\begin{definition}\label{defi:Bolza_min}
Given a potential $V$, we say that $x =(r,\vt) \in H^1(t_1,t_2)$ is a
\emph{fixed-time Bolza minimizer} associated to the ends $x_1=r_1e^{i\varphi_1}$, $x_2=r_2e^{i\varphi_2}$, 
if
\begin{itemize}
 \item $r(t_i)=r_i$ and $\vt(t_i)=\varphi_i$, $i=1,2$;
 \item for every $z=(\rho,\zeta)\in H^1(t_1,t_2)$,  there holds
\[
\rho (t_i)=r_i,\ \zeta(t_i)=\varphi_i,\ i=1,2\quad\implies\quad \action([t_1,t_2];x)\leq\action([t_1,t_2];z).
\]
\end{itemize}
If $\min_{t\in[t_1,t_2]} r(t) >0$ we say that the Bolza minimizer is collisionless.
\end{definition}

\begin{theorem}\label{theo:bolza}
Let $U\in \Uh$, $\vt^- \neq \vt^+ \in \Theta$, and consider a perturbed
potential $V=\dfrac{U(\vt)}{r^\alpha}+W$, with  $V\in\mathcal C^1(\RR^2\setminus{0})$, $\alpha>\alpha'$ and
\begin{equation}\label{eq:perturbed}
\lim_{r\to 0} r^{\alpha'} \left(W(x)+r|\nabla W(x)|\right)=0\;.
 \end{equation}
If $\alpha>\bar\alpha(U, \vt^-,\vt^+)$ then all
fixed-time Bolza minimizers associated to $x_1=(r_1,\vp_1)$ and $x_2=(r_2,\vp_2)$
within the sector $[\vt^-,\vt^+]$ are collisionless.
\end{theorem}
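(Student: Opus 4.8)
The plan is to argue by contradiction, combining a blow-up analysis at a putative collision instant with the nonexistence part of Theorem~\ref{theo:main}. Suppose $\alpha > \bar\alpha(U,\vt^-,\vt^+)$ but some fixed-time Bolza minimizer $x$ within the sector $[\vt^-,\vt^+]$ has a collision, say $r(t_0)=0$ for some $t_0\in(t_1,t_2)$. By standard facts on minimizers of singular Lagrangian actions (Sundman--Sperling type asymptotic estimates, valid here because the perturbation $W$ is subcritical in the sense of \eqref{eq:perturbed}, so near the collision the potential behaves like the pure $(-\alpha)$-homogeneous term $U(\vt)/r^\alpha$), the minimizer has a well-defined asymptotic direction at $t_0^\pm$, and these directions are central configurations of $U$; moreover the local behavior is that of a collision--ejection solution of the homogeneous problem with exponent $\alpha$. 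The key point is that, since the trajectory lives in the closed sector $[\vt^-,\vt^+]$ and $\vt^\pm$ are the \emph{minimal} central configurations on that sector (the boundary values), the incoming and outgoing asymptotic directions at the collision must be among $\{\vt^-,\vt^+\}$ (or interior minimal central configurations, which would only make the argument easier or be excluded by a separate variational comparison).

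Next I would perform a blow-up (parabolic rescaling) centered at the collision: setting $x_\lambda(t) = \lambda^{-2/(2+\alpha)} x(t_0 + \lambda t)$ and letting $\lambda\to 0$, the rescaled arcs converge (locally uniformly away from the new collision time, and weakly in $H^1_{\mathrm{loc}}$) to an entire zero-energy solution of the pure homogeneous problem $\ddot y = \nabla V_0(y)$ with $V_0 = U/r^\alpha$, because the subcriticality condition \eqref{eq:perturbed} kills the contribution of $W$ in the limit. This limiting object is, by the lower semicontinuity of the action and the minimality of $x$, itself a free-time Morse minimizer of the homogeneous action; being a bounded-below nonconstant zero-energy entire solution with asymptotic directions among the $\vt_i$, and since it cannot be constant (the original arc genuinely reaches $r=0$), it is precisely a parabolic trajectory associated with some pair $(\vt^\pm_{\mathrm{lim}}, U)$ in the sense of Definition~\ref{defi:Morse_min}, for the value $\alpha$ of the homogeneity exponent. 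But Theorem~\ref{theo:main} asserts that such a parabolic trajectory exists \emph{only} for the single value $\alpha = \bar\alpha(\vt^\pm_{\mathrm{lim}},U)$; one then checks, via the monotonicity/uniqueness structure behind $\bar\alpha$ (and the ordering $\bar\alpha(\vt^-,\vt^+,U)\ge \bar\alpha$ for the relevant restricted pair, because restricting the allowed angular sector cannot increase the threshold), that $\alpha \le \bar\alpha(\vt^-,\vt^+,U)$, contradicting the hypothesis $\alpha > \bar\alpha(U,\vt^-,\vt^+)$.

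I would then dispose of the remaining possibility that the collision is not an endpoint of the angular sector but occurs with asymptotic directions at interior central configurations: here one uses a direct local variational comparison (a Gordon-type or local-deformation estimate) showing that replacing a small collision arc by a nearby collisionless detour strictly decreases the action whenever $\alpha$ exceeds the corresponding threshold, or one absorbs this case into the same blow-up picture by noting that any interior minimal central configuration gives a parabolic limit with its own threshold, all of which are dominated by $\bar\alpha(\vt^-,\vt^+,U)$ by the extremality of the sector endpoints. Finally one checks that no collision can accumulate at the temporal endpoints $t_1,t_2$ either, since $r(t_i)=r_i>0$ (if $r_i=0$ the statement is about the sector structure near the given endpoint and follows from the same asymptotic analysis); and one verifies measurability/regularity details needed to apply the asymptotic estimates under the mere $\mathcal C^1$ regularity of the perturbed potential.

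The main obstacle I expect is making the blow-up limit rigorous and, in particular, identifying the limit as a genuine \emph{free-time} Morse minimizer (not merely a fixed-time one) so that Theorem~\ref{theo:main} applies: the free-time minimality must be inherited from the original fixed-time minimizer, which requires a careful use of the scaling invariance of the zero-energy action functional (the homogeneous action is invariant under the parabolic rescaling precisely at zero energy) together with a cutting-and-pasting argument showing that any time-reparametrized competitor for the limit lifts back to a competitor for $x$ with no larger action. Controlling the error terms coming from $W$ uniformly along the rescaling — i.e.\ quantifying \eqref{eq:perturbed} into an $o(1)$ bound on the rescaled action defect — is the other delicate technical point, and it is exactly the reason the hypothesis is stated with both $W$ and $r|\nabla W|$.
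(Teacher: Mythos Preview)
Your blow-up step contains a genuine gap. If you rescale about the collision time via $x_\lambda(t)=\lambda^{-2/(2+\alpha)}x(t_0+\lambda t)$, then $x_\lambda(0)=0$ for every $\lambda$, and the limit is a \emph{homothetic collision--ejection} solution of the homogeneous problem (radial motion along the asymptotic central configurations, with $r(0)=0$). It is not ``bounded below'' and it is not a parabolic trajectory in the sense of Definition~\ref{defi:Morse_min}, which explicitly requires $\min_t r(t)>0$. Consequently Theorem~\ref{theo:main} says nothing about this limit, and the contradiction you are aiming for does not follow. Your awareness that the free-time minimality must be justified is on target, but the more basic obstruction is that the limit object simply is not in the class to which Theorem~\ref{theo:main} applies.

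The paper's argument circumvents this by \emph{not} blowing up the colliding minimizer directly. Instead it introduces, for each small $\eps>0$, the obstacle problem $\min r\geq\eps$ and the equality-constrained problem $\min r=\eps$ over paths in the sector, and compares their values $c_\eps\leq c_\eps^c$. If $c_\eps<c_\eps^c$ for all small $\eps$ one is done; otherwise along a sequence $\eps_n\to0$ with $c_{\eps_n}=c_{\eps_n}^c$ one takes constrained minimizers $x_n$ (which are $\cont^1$ across the circular constraint, the velocity-jumping case being excluded by a local variation), rescales by $\hat x_n(t)=\eps_n^{-1}x_n(\eps_n^{-(2+\alpha)/2}t)$, and passes to the limit. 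The limit is a $\cont^1$ zero-energy path for the \emph{homogeneous} potential $(U,\alpha)$ (the perturbation $W$ disappears thanks to \eqref{eq:perturbed}), consisting of two parabolic arcs joined by a circular arc on $r=1$, entirely contained in the open sector. The contradiction then comes not from Theorem~\ref{theo:main} but from Lemma~\ref{lemma:barriere}, a barrier-type statement in the Devaney phase plane tailored to exactly this kind of constrained object: it forces $\alpha\leq\bar\alpha(\vt^-,\vt^+,U)$. If you want to salvage your approach, you would need an analogue of Lemma~\ref{lemma:barriere} for homothetic collision--ejection minimizers, or else switch to the constrained-obstacle scheme so that the blow-up limit stays away from the origin.
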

It is worthwhile noticing that, if conversely $\alpha\leq\bar\alpha(U, \vt^-,\vt^+)$, then there are always some
Bolza problems which admit only colliding minimizers. In addition, the very same arguments imply, when
$\alpha=\bar\alpha(U, \vt^-,\vt^+)$, the following statement, which gives a variational
generalization of Lambert's Theorem on the existence of the direct and inverse arcs for the planar Kepler problem
(\cite{Marchal01,Whittaker}).
\begin{proposition}\label{propo:1.5}
Let $U\in \Uh$, $\vt^- \neq \vt^+ \in \Theta$, and $V$ be a perturbed potential as in the previous theorem,
with $\alpha=\bar\alpha(U, \vt^-,\vt^+)$.
Given any pair of points $x_1$ and $x_2$ in the sector $(\vt^-,\vt^+)$,  all
fixed-time Bolza minimizers associated to $x_1$, $x_2$  within the sector $[\vt^-+\ep,\vt^+-\ep]$,
for some $\ep>0$, are collisionless.
\end{proposition}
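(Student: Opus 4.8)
The plan is to re-run the collision--exclusion scheme behind Theorem~\ref{theo:bolza}, this time taking advantage of the fact that the endpoints lie in the \emph{open} sector $(\vt^-,\vt^+)$. Choose $\ep>0$ so small that $\varphi_1,\varphi_2\in(\vt^-+\ep,\vt^+-\ep)$, set $\Sigma_\ep:=[\vt^-+\ep,\vt^+-\ep]$, and let $x=(r,\vt)$ be a fixed-time Bolza minimizer among the $H^1$ paths from $x_1$ to $x_2$ with $\vt$ valued in $\Sigma_\ep$ (existence is standard by the direct method, $\alpha<2$ ensuring finite action even along colliding competitors, and the angular constraint being preserved under uniform limits). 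It suffices to show that $x$ cannot collide, so assume for contradiction that $x(t_0)=0$ for some $t_0\in(t_1,t_2)$.

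First I would observe that the perturbation $W$ is invisible to the local analysis at $t_0$: by \eqref{eq:perturbed} and $\alpha'<\alpha$, under the parabolic rescaling adapted to the $(-\alpha)$-homogeneous part the contribution of $W$ vanishes in the limit, so the behaviour of $x$ near $t_0$ is governed by the pure potential $U(\vt)/r^\alpha$. As in \cite{BTV} and in the proof of Theorem~\ref{theo:bolza}, the one-sided limits $\eta^\pm:=\lim_{t\to t_0^\pm}\vt(t)$ exist and are critical points of $U$, and the blow-up $\lambda^{-2/(\alpha+2)}x(t_0+\lambda\,\cdot)$ converges as $\lambda\to0^+$ to a non-constant, zero-energy, self-similar solution of $\ddot y=\nabla\bigl(U(\vt)/r^\alpha\bigr)$ — a homothetic ejection--collision orbit, rectilinear along $\eta^-$ for negative times and along $\eta^+$ for positive times. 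Since $x$ is $\Sigma_\ep$-valued, $\eta^\pm\in\Sigma_\ep\subset(\vt^-,\vt^+)$, so in particular $\eta^\pm\notin\{\vt^-,\vt^+\}$. The degenerate possibility $\eta^-=\eta^+$ is ruled out at once, since a trajectory that reaches $0$ and leaves along the \emph{same} ray is strictly shortened near $r=0$ by the segment joining its (nearby, same-side) endpoints, hence is not a minimizer. After relabelling, $\eta^-<\eta^+$ with $[\eta^-,\eta^+]\subset\Sigma_\ep$, so that
\[
\eta^+-\eta^- < \vt^+-\vt^- .
\]

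Now I would invoke the monotonicity of the threshold exponent with respect to the angular sector: enlarging the admissible angular range cannot decrease $\bar\alpha$, and the decrease is strict under strict inclusion. Together with the displayed inequality and the hypothesis $\alpha=\bar\alpha(U,\vt^-,\vt^+)$ this gives
\[
\bar\alpha(U,\eta^-,\eta^+) < \bar\alpha(U,\vt^-,\vt^+) = \alpha .
\]
Hence $\alpha>\bar\alpha(U,\eta^-,\eta^+)$, and the collision--exclusion estimate from the proof of Theorem~\ref{theo:bolza} — which is purely local, using only the behaviour of $x$ on a neighbourhood of $t_0$ where $\vt$ takes values in the interior of $\Sigma_\ep$, clustered near $\eta^\pm$ (so that it applies on any slightly enlarged sub-sector $[\eta^--\delta,\eta^++\delta]\subset\Sigma_\ep$, for which still $\alpha>\bar\alpha(U,\eta^--\delta,\eta^++\delta)$) — forbids the collision at $t_0$. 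This contradiction shows that no Bolza minimizer within $\Sigma_\ep$ collides, which is precisely the statement of Proposition~\ref{propo:1.5}.

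The step I expect to require the most care is the strict monotonicity $\bar\alpha(U,\eta^-,\eta^+)<\bar\alpha(U,\vt^-,\vt^+)$ — equivalently, the impossibility that a proper sub-sector of $[\vt^-,\vt^+]$ carries a parabolic trajectory for the \emph{same} exponent $\bar\alpha$. This should follow from the variational characterization of $\bar\alpha$ in Theorem~\ref{theo:main} (uniqueness of $\bar\alpha$, and the fact that the associated parabolic trajectory has asymptotic directions exactly $\vt^-$ and $\vt^+$, hence cannot be confined to the strictly smaller $\Sigma_\ep$) together with strict monotonicity of the renormalized (Jacobi--Maupertuis) action under restriction of the admissible angular range. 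A minor technical point is the behaviour of the constrained minimizer where $\vt$ touches $\partial\Sigma_\ep$; this is immaterial for the collision analysis, since near any collision $\vt$ converges to $\eta^\pm$ lying in the interior of $\Sigma_\ep$, so on a neighbourhood of $t_0$ the angular constraint is inactive and $x$ is there a genuine free local Morse minimizer — which is all that the blow-up and comparison arguments need.
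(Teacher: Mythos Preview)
Your approach has a genuine gap, and it diverges from the paper's in a way that creates, rather than avoids, difficulties.

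First, the direct blow-up you perform at the collision time produces a pair of homothetic arcs with asymptotic directions $\eta^\pm$ that are merely \emph{critical points} of $U$ lying in the open interval $(\vt^-,\vt^+)$. There is no reason these should be global minima of $U$, i.e.\ elements of $\Theta$; they may well be local maxima. Consequently the quantity $\bar\alpha(U,\eta^-,\eta^+)$ on which your argument hinges need not even be defined (Definition~\ref{defi:baralfa} requires $\vt^\pm\in\Theta$), and Theorem~\ref{theo:bolza} cannot be invoked on the sub-sector $[\eta^--\delta,\eta^++\delta]$ since its endpoints are not in $\Theta$ either. Second, your dismissal of the case $\eta^-=\eta^+$ is not valid: the ``shortening by a segment'' compares geometric length, not action, and a collision--ejection along a single central configuration \emph{can} be a fixed-time minimizer---this is precisely the phenomenon the whole machinery is built to detect. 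Third, even granting that $\eta^\pm\in\Theta$ and $\eta^-<\eta^+$, the strict monotonicity $\bar\alpha(U,\eta^-,\eta^+)<\bar\alpha(U,\vt^-,\vt^+)$ is asserted without proof; your sketch via Theorem~\ref{theo:main} does not supply it, since the apsidal angles $\hat\vt^\pm(\alpha)$ depend on the base points and no comparison between different base points is available.

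The paper's route is different and sidesteps all of this. It reruns the proof of Theorem~\ref{theo:bolza} verbatim---hence working with the $\eps$-constrained minimizers and their blow-up, whose limit is a globally defined zero-energy $\cont^1$ path with a circular arc (not a colliding homothetic orbit)---and then applies Lemma~\ref{lemma:barriere} to get $\alpha\leq\bar\alpha(\vt^-,\vt^+,U)$. The borderline case $\alpha=\bar\alpha$ is handled by Remark~\ref{rem:baralpha}: equality forces the asymptotic directions of the blow-up limit to be exactly $\vt^\pm$. But the constraint keeps the whole path inside $[\vt^-+\ep,\vt^+-\ep]$, so its asymptotic directions cannot equal $\vt^\pm$, and this is the contradiction. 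No monotonicity of $\bar\alpha$ in the sector, and no analysis of $\eta^-$ versus $\eta^+$, is needed.
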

Some further interesting consequences can be drawn, in the special case when
$\vt^+=\vt^-+2k\pi$, which connect the parabolic threshold with the existence of
non-collision periodic orbits having a prescribed winding number (this is connected with
the minimizing property of Kepler ellipses, see \cite{Gordon77}).

\begin{theorem}\label{theo:gordon}
Let $U\in \Uh$ be such that all its local minima are non-degenerate global ones,
and consider the  potential $V=\dfrac{U(\vt)}{r^\alpha}$.
Given any integer $k\neq 0$ and period $T>0$, if
\begin{equation}\label{eq:alfamax}
\alpha>\bar\alpha(U, \vt^*,\vt^*+2k\pi)\;,\quad\text{for every minimum $\vt^*$ of $U$,}
\end{equation}
then any action minimizer in the class of $T$--periodic trajectories winding $k$ times around zero is collisionless.
\end{theorem}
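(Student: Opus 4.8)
The plan is to produce the periodic minimizer by the direct method and then to rule out its collisions by cutting the loop at a collision instant, thereby reducing to the fixed--ends situation governed by Theorem~\ref{theo:bolza}. \emph{Existence.} Let $\Lambda_k$ be the set of $T$--periodic loops in $H^1(\RR/T\ZZ;\RR^2)$ with winding number $k$ about the origin. On $\Lambda_k$ the action $\action([0,T];\cdot)$ is finite -- the potential contribution $\int_0^T U(\vt)/r^\alpha\,\dt$ is integrable even along loops meeting $0$, precisely because $\alpha<2$ forces $r\gtrsim|t-t_0|^{1/2}$ near a collision -- it is bounded below by a positive constant, and it is coercive: a minimizing sequence cannot run off to infinity, since a loop winding $k\neq0$ times on scale $R$ has length at least $2\pi|k|R$ and hence kinetic action of order $R^2/T$, and it cannot collapse onto the origin, by a Gordon--type lower bound for loops of nonzero winding. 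Weak $H^1$ lower semicontinuity of $\action$, together with the stability of the winding number under weak convergence (no uniform collapse being possible), then yield a minimizer $x=(r,\vt)$, which off its collision set $\{r=0\}$ solves \eqref{eq:dynsys}.

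\emph{The collision direction is a minimum of $U$.} Suppose, by contradiction, that $x$ has a collision at some time $t_0$. The standard asymptotic analysis of collisions of action minimizers (blow--up of Sperling--Sundman type, carried out in the present homogeneous setting in \cite{BFT2} and \cite{BTV}) shows that near $t_0$ the loop is asymptotic to a parabolic ejection--collision motion and that its asymptotic direction is a single central configuration, approached from both sides; for a minimizer this direction is moreover a local minimum of $U$, hence by hypothesis a non--degenerate global one. Fixing a lift, call $\vt^*$ this value, so that $\vt(t)\to\vt^*$ as $t\to t_0^\pm$; in particular $\vt^*$ and $\vt^*+2k\pi$ are admissible endpoints for the framework of Section~\ref{sec:intro}, and the threshold $\bar\alpha(U,\vt^*,\vt^*+2k\pi)$ appearing in \eqref{eq:alfamax} is defined.

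\emph{Cutting at the collision.} Consider the restriction of the lifted minimizer to $[t_0,t_0+T]$: it joins a point near $0$ at angle $\vt^*$ to a point near $0$ at angle $\vt^*+2k\pi$ and, being a subarc of the global minimizer with its endpoints frozen, it is a fixed--time Bolza minimizer in the sense of Definition~\ref{defi:Bolza_min}; moreover, up to an angular folding that does not increase the action, it may be taken within the sector $[\vt^*,\vt^*+2k\pi]$. Since $\alpha>\bar\alpha(U,\vt^*,\vt^*+2k\pi)$ by \eqref{eq:alfamax}, Theorem~\ref{theo:bolza} applied with $W\equiv0$ (so that $V=U(\vt)/r^\alpha$, any $\alpha'\in(0,\alpha)$ works and \eqref{eq:perturbed} holds trivially) forces every such Bolza minimizer to be collisionless -- contradicting the collision of $x$ at $t_0$. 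Hence $x$ is collisionless. When $x$ has several collisions one isolates a single one of them and runs the same comparison on the subarc between two consecutive collisions.

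The step I expect to be the genuine obstacle is the topological bookkeeping in the last paragraph: for a minimizer that might a priori oscillate in the angular variable one has to show that cutting at a collision yields -- after a modification that does not increase the action -- an arc truly confined to the sector $[\vt^*,\vt^*+2k\pi]$, so that Theorem~\ref{theo:bolza} applies with exactly the threshold of \eqref{eq:alfamax}; and, when several collisions are present, one needs to compare $\bar\alpha(U,\vt^*,\vt^*+2k\pi)$ with the thresholds attached to the sub--arcs, which rests on the variational description of $\bar\alpha$ underlying Theorem~\ref{theo:main}. The remaining ingredients -- the direct method, the collision asymptotics, and the principle that the restriction of a minimizer is a fixed--ends minimizer -- are by now routine.
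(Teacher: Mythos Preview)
Your route---cut the loop at the collision and invoke Theorem~\ref{theo:bolza}---is different from the paper's and has two real gaps.

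\emph{First gap: the collision direction need not be a minimum.} The blow-up analysis you cite yields only that the asymptotic direction is a central configuration; Corollary~\ref{coro:maximal_not_minimal} then excludes maxima satisfying~\eqref{eq:too_strict}, but the paper explicitly remarks just after it that ``it is not clear whether trajectories corresponding to `not too-strict' maxima for $U$ \dots\ may be minimizers''. Your reduction to the sector $[\vt^*,\vt^*+2k\pi]$ with $\vt^*$ a \emph{minimum} therefore rests on an unproved claim. The paper sidesteps this by a comparison trick: it replaces $U$ by an auxiliary $\tilde U\geq U$ sharing the same minima but with every maximum sharpened so that~\eqref{eq:too_strict} holds, proves the collisionless statement for $\tilde U$ (where Corollary~\ref{coro:maximal_not_minimal} does rule out all maxima), and then transfers it back via the identity $c^c(\alpha,U)=c^c(\alpha,\tilde U)$ for the collision level.

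\emph{Second gap: angular folding does not control the action.} Reflecting $\vt$ across $\vt^*$ leaves $\dot r^2+r^2\dot\vt^2$ unchanged but replaces $U(\vt)$ by $U(2\vt^*-\vt)$; since $U$ carries no symmetry about its minima, the folded path can have strictly larger action, so it need not be a Bolza minimizer in the sector, and Theorem~\ref{theo:bolza} does not apply to it. You flag this as the expected obstacle, and it is indeed fatal as stated. The paper avoids any sector confinement of the original loop: after reducing to $k=1$ by conformal invariance, it sets up the constrained problems $c^c_\eps$, blows up along a sequence $\eps_n\to0$, and obtains in the limit a $\cont^1$ zero-energy constrained trajectory to which the phase-plane barrier Lemma~\ref{lemma:barriere} applies directly. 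The angular bound $\widetilde\vt^+\leq\widetilde\vt^-+2\pi$ that Lemma~\ref{lemma:barriere} needs is read off at the blow-up level from the simplicity of the minimizing loop, not from folding.
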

The outline of the paper is the following: in Section \ref{sec:devaney} we
exploit some results due to Devaney \cite{DevInvMath1978,DevProgMath1981} in
order to rewrite equations \eqref{eq:dynsys}, \eqref{eq:dynsys2} in terms of an equivalent planar first-order system; this allows us to develop a first
phase-plane analysis of the dynamical properties of parabolic trajectories. In
Section \ref{sec:min_prop} we turn to the variational properties of zero-energy
solutions. In Section \ref{sec:sector} we prove Theorem \ref{theo:main} in the particular case in which $\pi< \vt^+-\vt^-\leq 2\pi$. Finally
Sections \ref{sec:srotolo} and \ref{sec:other_proofs} are devoted to the
end of the proof of Theorem \ref{theo:main} and to the proofs of Theorems \ref{theo:bolza}, \ref{theo:gordon}, respectively.
%
\section{Phase Plane Analysis}\label{sec:devaney}
%
Following Devaney \cite{DevInvMath1978,DevProgMath1981}, an appropriate change of variables makes the differential
problem \eqref{eq:dynsys}, \eqref{eq:dynsys2} equivalent to a planar first order  system, for which a phase plane
analysis can be carried out. This allows a first investigation of its trajectories from a dynamical
(i.e. not variational) point of view.

Let $U\in\Uh_{\vt_1\vt_2}$, and let us assume for simplicity that $U$ is a Morse function, even
though the only important assumption is that $\vt_1$, $\vt_2$ are non-degenerate.
Introducing the Cartesian coordinates $q_1=r\cos \vt$, $q_2=r\sin\vt$
and the momentum vector $(p_1,p_2) = (\dot q_1,\dot q_2)$, we write
equations \eqref{eq:dynsys} and \eqref{eq:dynsys2} as
\[
\begin{cases}
\dot q_1 = p_1 \\
\dot q_2 = p_2 \\
\dot p_1 = {\partial_{q_1}}\left( {r^{-\alpha}}{U(\vt)} \right)
         = {r^{-\alpha-2}}\left( -U'(\vt)q_2 -\alpha U(\vt)q_1 \right) \\
\dot p_2 = {\partial_{q_2}}\left( {r^{-\alpha}}{U(\vt)} \right)
         = {r^{-\alpha-2}}\left( U'(\vt)q_1 -\alpha U(\vt)q_2 \right),
\end{cases}
\]
and
\[
\frac12 \left( p_1^2 + p_2^2\right) = \frac{U(\vt)}{r^\alpha}.
\]
Since $U(\vartheta)\geq U(\vt_1)=U(\vt_2) =: U_{\min}>0$, we have that $|p|\neq0$.
As a consequence, for every solution of the previous dynamical system we can find smooth functions
$z >0$ and $\vp \in \RR$ in such a way that
$p_1=r^{-\alpha/2}z\cos \vp$, $p_2=r^{-\alpha/2}z\sin\vp$.
These functions satisfy
\[
z = \sqrt{2U(\vt)}
\]
and
\[
\begin{cases}
\dot r   = r^{-\alpha/2}z\left( \cos \vt \cos \vp + \sin \vt \sin \vp \right)
         = r^{-\alpha/2}z \cos (\vp-\vt) \\
\dot \vt = r^{-1-\alpha/2}z\left( \cos \vt \sin \vp - \sin \vt \cos \vp \right)
         = r^{-1-\alpha/2}z\sin (\vp-\vt) \\
\dot z   = r^{-1-\alpha/2}U'(\vt)\sin(\vp-\vt) \\
\dot \vp = \frac{1}{z} r^{-1-\alpha/2} \left[ U'(\vt)\cos(\vp-\vt) +\alpha U(\vt) \sin(\vp-\vt)\right].
\end{cases}
\]
This system has a singularity at $r=0$ that can be removed by a change of time scale.
Assuming $r>0$, we introduce the new variable $\tau$ via
\[
\frac{\dt}{\dtau} = zr^{1+\alpha/2}
\]
in order to rewrite the dynamical system as (here ``\,$'$\,'' denotes the derivative with respect to $\tau$)
\begin{equation}\label{eq:syscomplete}
\begin{cases}
r'   = r z^2 \cos (\vp-\vt) = 2r U(\vt) \cos (\vp-\vt) \\
z'   = z U'(\vt)\sin(\vp-\vt) \\
\vt' = z^2 \sin (\vp-\vt) = 2 U(\vt) \sin (\vp-\vt) \\
\vp' = U'(\vt)\cos(\vp-\vt) +\alpha U(\vt) \sin(\vp-\vt),
\end{cases}
\end{equation}
which contains the independent planar system
\begin{equation}\label{eq:dydthetaphi}
\begin{cases}
\vt' = 2 U(\vt) \sin (\vp-\vt) \\
\vp' = U'(\vt)\cos(\vp-\vt) +\alpha U(\vt) \sin(\vp-\vt).
\end{cases}
\end{equation}
It is immediate to see that the systems above enjoy global existence, and that the stationary points of \eqref{eq:dydthetaphi} are the points $(\vt^*,\vp^*)$, where $U'(\vt^*)=0$ and $\sin(\vp^*-\vt^*)=0$.

\begin{theorem}[Devaney \cite{DevProgMath1981}]\label{thm:devaney}
The path $x=x(t)$ satisfies \eqref{eq:dynsys}, \eqref{eq:dynsys2} if and only if
$(\vt,\vp)$ satisfies \eqref{eq:dydthetaphi} (and $(r,z)$ satisfies \eqref{eq:syscomplete}).

The function
\[
v(\tau) = \sqrt{U(\vartheta(\tau))} \cos\left( \varphi(\tau)-\vartheta(\tau) \right),
\]
is non-decreasing on the solutions of \eqref{eq:dydthetaphi}, which correspond to
\begin{itemize}
\item saddle-type equilibria $(\vt^{*},\vt^*+h\pi)$, $U'(\vt^*)=0$,  $U''(\vt^*)>0$  and $h\in\ZZ$;
\item sink/source-type equilibria $(\vt^{*},\vt^*+h\pi)$, where $U'(\vt^*)=0$,  $U''(\vt^*)<0$  and $h\in\ZZ$;
\item heteroclinic trajectories connecting two of the previous equilibria.
\end{itemize}
To every trajectory of \eqref{eq:dydthetaphi} there corresponds infinitely
many trajectories of \eqref{eq:syscomplete}, all equivalent through a radial homotheticity.

The corresponding solutions of \eqref{eq:dynsys}, \eqref{eq:dynsys2} satisfy the following:
\begin{itemize}
\item if
\begin{equation}\label{eq:cond_cos}
\text{$\cos(\vp-\vt)\to\pm1$ as $\tau\to\pm\infty$,}
\end{equation}
then $x$ is globally defined and unbounded in the future/past (in $t$);
\item if $\cos(\vp-\vt)\to\mp 1$ as $\tau\to\pm\infty$, then $t(\tau)\to T_{\pm}\in\RR$
and $x(t)\to0$ as $t\to T_{\pm}$.
\end{itemize}
\end{theorem}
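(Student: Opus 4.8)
\section*{Proof proposal (plan for Theorem \ref{thm:devaney})}

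The plan is to build everything on the change of variables already performed just before the statement, which establishes the ``only if'' half of the equivalence; for the converse I would start from a solution $(\vt,\vp)$ of \eqref{eq:dydthetaphi} coupled with $(r,z)$ governed by \eqref{eq:syscomplete}, and first observe that the relation $z^2=2U(\vt)$ is \emph{preserved} by the flow: indeed $w:=z^2-2U(\vt)$ satisfies the linear equation $w'=2U'(\vt)\sin(\vp-\vt)\,w$, so imposing $w=0$ on the initial datum forces $z=\sqrt{2U(\vt)}$ identically. One then reconstructs $q=(r\cos\vt,r\sin\vt)$, $p=r^{-\alpha/2}z(\cos\vp,\sin\vp)$ and the physical time from $\dt/\dtau=zr^{1+\alpha/2}$ (a strictly increasing, hence invertible, reparametrisation), and retracing the computations backwards shows that $q(t)$ solves \eqref{eq:dynsys} with $\tfrac12|\dot q|^2=V(q)$ on the $t$--interval covered; the zero--energy relation is just $\tfrac12|p|^2=\tfrac12 r^{-\alpha}z^2=U(\vt)/r^\alpha$. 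This part is pure bookkeeping.

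For the monotonicity of $v(\tau)=\sqrt{U(\vt)}\cos(\vp-\vt)$ I would differentiate directly: writing $\psi=\vp-\vt$ and substituting $\vt'=2U\sin\psi$ and $\vp'-\vt'=U'\cos\psi+(\alpha-2)U\sin\psi$ from \eqref{eq:dydthetaphi}, the two terms containing $U'$ cancel and what is left is
\[
v'=(2-\alpha)\,U(\vt)^{3/2}\sin^2(\vp-\vt),
\]
which is nonnegative since $\alpha\in(0,2)$ and vanishes exactly on $\{\sin(\vp-\vt)=0\}$. Getting the algebra and the sign right here is the step where I would be most careful, although it is entirely elementary.

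Next I would classify the equilibria by linearising \eqref{eq:dydthetaphi} at $(\vt^*,\vt^*+h\pi)$ with $U'(\vt^*)=0$: a short computation gives a Jacobian with $\det=-2U(\vt^*)\,U''(\vt^*)$ and $\operatorname{tr}=(\alpha-2)(-1)^{h}\,U(\vt^*)$. Hence $U''(\vt^*)>0$ yields a saddle (real eigenvalues of opposite sign), while $U''(\vt^*)<0$ gives a node which is a sink or a source according to the parity of $h$ — and never a centre, since $v$ is strictly increasing along non-constant orbits so no periodic orbit can exist. Passing to the quotient torus $(\vt,\vp)\in(\RR/2\pi\ZZ)^2$, on which \eqref{eq:dydthetaphi} is well defined and all orbits are precompact (the flow being complete), the LaSalle invariance principle applied to $v$ — bounded, nondecreasing, with $\{v'=0\}=\{\sin(\vp-\vt)=0\}$, whose largest invariant subset is precisely the set of equilibria — forces the limit sets as $\tau\to\pm\infty$ to be single equilibria (these being isolated, by the standing non--degeneracy of the critical points of $U$); thus every non--stationary orbit is a heteroclinic, and lifting to $\RR^2$ yields the stated trichotomy. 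The radial homotheticity is then immediate: once $(\vt,\vp)$ is fixed, \eqref{eq:syscomplete} determines $r$ up to a multiplicative constant, since $(\log r)'=2U(\vt)\cos(\vp-\vt)$ depends only on $(\vt,\vp)$, and $r\mapsto\lambda r$ rescales $q$ by $\lambda$ and $t$ by $\lambda^{1+\alpha/2}$.

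Finally, for the behaviour in $t$, fix such an orbit with $\vt(\tau)\to\vt^*$ as $\tau\to+\infty$. Since $(\log r)'=2U(\vt)\cos(\vp-\vt)\to\pm 2U(\vt^*)$ according to the sign in \eqref{eq:cond_cos}, in the ``$+1$'' case $r$ grows at least exponentially in $\tau$ while $z=\sqrt{2U(\vt)}$ stays bounded away from $0$ (as $U\ge U_{\min}>0$), so $\int zr^{1+\alpha/2}\dtau$ diverges, $t(\tau)\to+\infty$ and $r(t)\to+\infty$; in the ``$-1$'' case $r\to 0$ exponentially, $r^{1+\alpha/2}$ is integrable against $\dtau$ (and $z$ is bounded, $U$ being continuous and periodic), hence $t(\tau)\to T_+\in\RR$ and $x(t)=r(t)(\cos\vt,\sin\vt)\to 0$ as $t\to T_+$. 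The statements at $\tau\to-\infty$ are identical after reversing the rôles of past and future. The only genuine subtlety in the whole argument is making sure the LaSalle step is run on the compact quotient, so that the limit sets are nonempty, connected and made of isolated equilibria; everything else is Devaney's blow--up followed by routine computations.
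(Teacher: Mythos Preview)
The paper does not supply its own proof of this theorem: it is stated with attribution to Devaney and used as a black box, so there is no ``paper's proof'' to compare against. That said, your argument is correct and self-contained, and in fact reproduces two computations the paper performs elsewhere for other purposes --- the formula $v'=(2-\alpha)U(\vt)^{3/2}\sin^2(\vp-\vt)$ appears as equation~\eqref{eq:v'} in the proof of Corollary~\ref{coro:as}, and the linearisation at a saddle is carried out in Lemma~\ref{lem:unst_man_control} (where the matrix $J^-$ agrees with your Jacobian). Your LaSalle argument on the torus is the right way to globalise the local picture: the only point worth emphasising is that $v$ descends to a single-valued continuous function on $(\RR/2\pi\ZZ)^2$, which you do note. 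One tiny terminological quibble: when $U''(\vt^*)<0$ the equilibrium need not be a node in the strict sense (the discriminant $(\alpha-2)^2U(\vt^*)^2+8U(\vt^*)U''(\vt^*)$ can be negative, giving a spiral), but it is always a sink or source since the trace never vanishes, which is all the statement claims.
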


\begin{figure}[!t]
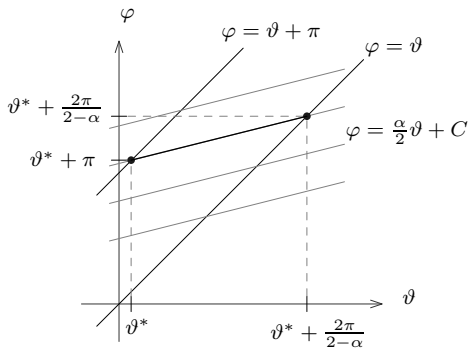

\begin{center}
\begin{texdraw}
\drawdim cm  \setunitscale .5
\linewd 0.01 \setgray 0.2
\move (0 -1) \arrowheadtype t:V \arrowheadsize l:0.3 w:0.2 \avec (0 7)
\htext (0 7.5) {\footnotesize{$\varphi$}}
\move (-1 0) \arrowheadtype t:V \arrowheadsize l:0.3 w:0.2 \avec (7 0)
\htext (7.5 0) {\footnotesize{$\vt$}}
\linewd 0.02 \setgray 0
\move (-.6 -.6) \lvec (6.5 6.5)
\htext (6.5 6.6) {\footnotesize{$\vp=\vt$}}
\move (-.6 2.9) \lvec (3.3 6.8)
\htext (2.7 7) {\footnotesize{$\vp=\vt+\pi$}}
\linewd 0.01
\move (.33 -0.2) \lvec (.33 0.2)
\move (5 -0.2) \lvec (5 0.2)
\htext (.13 -.8) {\footnotesize{$\vt^*$}}
\htext (4 -1.2) {\footnotesize{$\vt^*+\frac{2\pi}{2-\alpha}$}}
\move (-0.2 3.83) \lvec (0.2 3.83)
\move (-0.2 5) \lvec (0.2 5)
\htext (-2.4 3.6) {\footnotesize{$\vt^*+\pi$}}
\htext (-3 4.8) {\footnotesize{$\vt^*+\frac{2\pi}{2-\alpha}$}}
\linewd 0.01 \setgray 0.5 \lpatt (0.2 0.2)
\move (.33 0.2) \lvec (.33 3.83) \lvec (0.2 3.83)
\move (5 0.2) \lvec (5 5) \lvec (0.2 5)
\lpatt()
\move (-.25 3.68) \lvec (6 5.25)
\move (-.25 4.68) \lvec (6 6.25)
\move (-.25 2.68) \lvec (6 4.25)
\htext (6 4.25) {\footnotesize{$\vp = \frac{\alpha}{2}\vt+C$}}
\move (-.25 1.68) \lvec (6 3.25)
\setgray 0
\linewd 0.03
\move (.33 3.83) \fcir f:0.1 r:0.1
\move (5 5) \fcir f:0.1 r:0.1
\linewd 0.04
\move (.33 3.83) \lvec (5 5)
\end{texdraw}
\end{center}
\caption{the figure sketches the phase portrait of \eqref{eq:dydthetaphi}
when $U(\vartheta)\equiv 1$. The dynamical system reads
$\vp' = ({\alpha}/{2})\,\vt' = \alpha \sin(\vp-\vt)$,
which critical points satisfy $\varphi = \vartheta +k\pi$, $k \in \ZZ$.
Trajectories lie on the bundle $\varphi = (\alpha/2) \vartheta + C$, $C \in \RR$, and,
recalling condition \eqref{eq:cond_cos}, we deduce that parabolic solutions coincide
with heteroclinic connections departing from points on
$\varphi = \vartheta +(2k+1)\pi$ and ending on $\varphi = \vartheta +2k\pi$, for some $k\in\ZZ$.
For instance, when $k=0$, we obtain heteroclinics connecting $(\vartheta^*,\vartheta^*+\pi)$ to
$(2\pi/(2-\alpha)+\vartheta^*,2\pi/(2-\alpha)+\vartheta^*)$, for some $\vartheta^* \in \RR$.
Going back to the original dynamical system, this implies that parabolic motions exists
only when the angle between the ingoing and outgoing asymptotic directions
is $2\pi/(2-\alpha)$; let us emphasize that such angle is always greater than $\pi$.
When $\alpha =1$, i.e. in the classical Kepler problem, this angle is $2\pi$:
the heteroclinic between $(\vartheta^*,\vartheta^*+\pi)$ and
$(2\pi+\vartheta^*,2\pi+\vartheta^*)$
actually describes a parabola whose axis form an angle
$\vartheta^*$ with the horizontal line.
\label{fig:keplero}}
\end{figure}

In Figure \ref{fig:keplero} we describe the phase plane for the dynamical system \eqref{eq:dydthetaphi}
when $U$ is \emph{isotropic} and in particular for the Kepler problem.
On the other hand, if we take into account an anisotropic potential $U$ in the class $\Uh_{\vt_1 \vt_2}$
and a homogeneous extension $(U,\alpha)$, $\alpha \in (0,2)$,
then we can deduce the following result (by time reversibility, it is not
restrictive to assume that $\vt^-<\vt^+$).
\begin{corollary}\label{coro:as}
Let $\vt^-<\vt^+$ belong to $\Theta_{\vt_1 \vt_2}$ and let $x=rs$ be an associated  parabolic Morse minimizer
for $(U,\alpha)$. Then (a suitable choice of)
the corresponding $(\vt,\vp)$ is an heteroclinic connection
between the saddles
\[
(\vt^-,\vt^- + \pi) \text{ and } (\vt^+,\vt^+).
\]
Moreover $\vt$ is strictly increasing between $\vt^-$ and $\vt^+$.
\end{corollary}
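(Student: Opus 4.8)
The plan is to read the statement off Devaney's Theorem~\ref{thm:devaney}, supplemented by a \emph{quantitative} use of the monotone function $v$; the only genuine work is to exclude that the heteroclinic meets the lines $\{\vp=\vt+h\pi\}$ at finite $\tau$.

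First, a parabolic trajectory is entire, with $r\to+\infty$ at both ends and $\min r>0$. By Theorem~\ref{thm:devaney} an equilibrium of~\eqref{eq:dydthetaphi} corresponds to a solution that collides in the original time, so the associated $(\vt,\vp)$ is a nontrivial heteroclinic between two equilibria; in particular it converges to a pair $(\vt^*,\vt^*+h\pi)$ at each end, so $\cos(\vp-\vt)$ tends to $\pm1$ there. Since $x$ is entire and collisionless, the dichotomy in the last two bullets of Theorem~\ref{thm:devaney} forces $\cos(\vp-\vt)\to-1$ as $\tau\to-\infty$ and $\cos(\vp-\vt)\to+1$ as $\tau\to+\infty$ (the opposite limits would give a collision in the past, resp.\ in the future). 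As $x$ is entire the end $\tau\to-\infty$ corresponds to $t\to-\infty$, where $\vt\to\vt^-$, and symmetrically at $+\infty$; hence the orbit of~\eqref{eq:dydthetaphi} runs from $(\vt^-,\vt^-+(2h^-+1)\pi)$ to $(\vt^+,\vt^++2h^+\pi)$ for some $h^\pm\in\ZZ$, and both endpoints are saddle-type because $\vt^\pm\in\Theta_{\vt_1\vt_2}$ and $U\in\Uh_{\vt_1\vt_2}$ give $U''(\vt^\pm)>0$.

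Next I would use the monotonicity of $v$ quantitatively. The direct computation behind Theorem~\ref{thm:devaney} gives
\[
v'=(2-\alpha)\,U(\vt)^{3/2}\sin^2(\vp-\vt)\ge0,
\]
with equality only when $\sin(\vp-\vt)=0$. Since $\vt^\pm$ are \emph{minimal} central configurations, $U(\vt^\pm)=U_{\min}$, hence $v(-\infty)=-\sqrt{U_{\min}}$ and $v(+\infty)=+\sqrt{U_{\min}}$, so by monotonicity $|v(\tau)|\le\sqrt{U_{\min}}$ for all $\tau$. Suppose now $\vp(\tau_0)-\vt(\tau_0)=h\pi$ at some finite $\tau_0$: then $|v(\tau_0)|=\sqrt{U(\vt(\tau_0))}\ge\sqrt{U_{\min}}$, which forces $U(\vt(\tau_0))=U_{\min}$ and $v$ to attain a global extremum at $\tau_0$; being non-decreasing, $v$ is then constant on a half-line $\tau\le\tau_0$ or $\tau\ge\tau_0$, so $\sin(\vp-\vt)\equiv0$ there, so $\vt$ is constant and, $\vt(\tau_0)$ being a critical point of $U$, the orbit sits at an equilibrium on that half-line, hence everywhere --- contradicting that $x$ is parabolic. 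Therefore $\sin(\vp-\vt)$ never vanishes, so it has a constant sign, necessarily positive because $\vt'=2U(\vt)\sin(\vp-\vt)$ carries $\vt$ from $\vt^-$ up to $\vt^+>\vt^-$. This yields that $\vt$ is strictly increasing with range $(\vt^-,\vt^+)$.

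Finally, $\sin(\vp-\vt)>0$ together with $\vp-\vt\to(2h^-+1)\pi$ as $\tau\to-\infty$ confines $\vp-\vt$ to the strip $(2h^-\pi,(2h^-+1)\pi)$ for all $\tau$, which forces $\vp-\vt\to2h^-\pi$ as $\tau\to+\infty$, i.e.\ $h^+=h^-=:h$. Replacing $(\vt,\vp)$ by the equally admissible lift $(\vt,\vp-2h\pi)$ --- which solves the same system~\eqref{eq:dydthetaphi} and leaves $x$ unchanged, $\vp$ being defined only modulo $2\pi$ --- normalizes the endpoints to $(\vt^-,\vt^-+\pi)$ and $(\vt^+,\vt^+)$, as asserted. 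The main obstacle is the finite-time non-vanishing of $\sin(\vp-\vt)$ in the third paragraph, but the pinching $|v|\le\sqrt{U_{\min}}\le\sqrt{U(\vt)}$ disposes of it quickly.
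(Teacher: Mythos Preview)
Your proof is correct and follows essentially the same route as the paper's: both identify the parities of the endpoint lifts via the collisionless condition \eqref{eq:cond_cos}, use the pinching $-\sqrt{U_{\min}}\le v(\tau)\le\sqrt{U_{\min}}$ coming from minimality of the asymptotic configurations, and deduce that $\sin(\vp-\vt)$ never vanishes, forcing $\vt$ strictly monotone and $h_1=h_2+1$. The only cosmetic difference is that the paper invokes the alternate form $v'=(2-\alpha)\sqrt{U(\vt)}\bigl(U(\vt)-v^2\bigr)$ to read off strict positivity of $v'$ in one line (since $v^2\le U_{\min}\le U(\vt)$, with equality only at equilibria), whereas you argue the non-vanishing of $\sin(\vp-\vt)$ by contradiction; your argument is a touch longer but equally valid, and you make the ``suitable choice'' normalization $\vp\mapsto\vp-2h\pi$ explicit, which the paper leaves to the reader.
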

\begin{proof}
Since $\vt^{\pm}$ are minima for $U$ we have that $(\vt,\vp)$ connects
the two saddles (say)
\[
(\vt^-,\vt^- + h_1\pi) \text{ and } (\vt^+,\vt^+ + h_2\pi),
\]
in such a way that
\[
\lim_{\tau\to-\infty}\left[\vp(\tau)-\vt(\tau)\right]=h_1\pi,\qquad
\lim_{\tau\to+\infty}\left[\vp(\tau)-\vt(\tau)\right]=h_2\pi.
\]
Since $x$ is globally defined, condition \eqref{eq:cond_cos} holds,
yielding $\cos (h_1\pi) = -1$ and $\cos (h_2\pi) = 1$, that is $h_1$ is odd while $h_2$ is even.
Since $v$ is non-decreasing, we have that
\[
-\sqrt{U_{\min}} = v(-\infty) < v(\tau) < v(+\infty) = \sqrt{U_{\min}}.
\]
Now we observe that
\begin{equation}\label{eq:v'}
v' = (2-\alpha)\left[U(\vartheta)\right]^{3/2} \sin^2\left( \varphi-\vartheta \right)
   = (2-\alpha)\sqrt{U(\vartheta)} \left[U(\vartheta)-v^2\right],
\end{equation}
hence $v$ strictly increases.
Then $\sin \left( \varphi-\vartheta \right) \neq 0$, therefore also
$\vt$ is strictly monotone. Since $\vt^- < \vt^+$ we obtain that $\vt$ increases.
But this finally implies that $\sin \left( \varphi-\vartheta \right) > 0$, for every $\tau$.
Summing up all the information we deduce that
\[
h_1 = h_2 + 1. \qedhere
\]
\end{proof}
Motivated by the previous result we devote the rest of the section to study the properties
of the stable and unstable trajectories associated to the saddle points of \eqref{eq:dydthetaphi},
in dependence of the parameter $\alpha$. To start with,
using equation \eqref{eq:v'}, we provide a necessary condition for the existence of saddle-saddle connections.
\begin{lemma}\label{lem:stimaalpha}
Let us assume that for some $\alpha \in (0,2)$ there exists a saddle-saddle connection
for \eqref{eq:dydthetaphi} between $(\vt^-,\vt^-+\pi)$ and $(\vt^+,\vt^+)$.
Then
\[
2-\frac{2\pi}{\vt^+-\vt^-} \leq \alpha \leq 2-\frac{4}{\vt^+-\vt^-}\arcsin \sqrt\frac{U_{\min}}{U_{\max}},
\]
where $U_{\min}\leq U(\vt)\leq U_{\max}$, for every $\vt$.
\end{lemma}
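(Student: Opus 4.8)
The plan is to promote Devaney's monotone quantity $v$ to the role of principal unknown and to extract from \eqref{eq:v'} a separable scalar equation for $v$ viewed as a function of the angle $\vartheta$. First I would record the qualitative structure of the connection, reproducing the argument in the proof of Corollary~\ref{coro:as} (which in fact uses nothing beyond the existence of a heteroclinic of \eqref{eq:dydthetaphi} joining $(\vt^-,\vt^-+\pi)$ to $(\vt^+,\vt^+)$ with $\vt^-<\vt^+$): along such a trajectory one has $\sin(\vp-\vt)>0$ for every $\tau\in\RR$, so that both $\vt$ and $v$ are strictly increasing; moreover, passing to the limits at the two saddles, $\vt(\tau)\to\vt^\mp$ and — using $\cos(\vp-\vt)\to\mp1$ together with $U(\vt^\pm)=U_{\min}$, since $\vt^\pm\in\Theta$ are global minima of $U$ — $v(\tau)\to\pm\sqrt{U_{\min}}$ as $\tau\to\mp\infty$. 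In particular $v(\tau)^2\le U_{\min}\le U(\vartheta(\tau))$ for all $\tau$, so that all the square roots below are meaningful, and $\tau\mapsto\vartheta(\tau)$ is a diffeomorphism of $\RR$ onto $(\vt^-,\vt^+)$.

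Next I would compute, using \eqref{eq:v'} and the elementary identities $U(\vt)\sin^2(\vp-\vt)=U(\vt)-v^2$ and (because $\sin(\vp-\vt)>0$) $\sqrt{U(\vt)}\,\sin(\vp-\vt)=\sqrt{U(\vt)-v^2}$, that
\[
\frac{dv}{d\vartheta}=\frac{v'}{\vt'}=\frac{(2-\alpha)\,[U(\vt)]^{3/2}\sin^2(\vp-\vt)}{2\,U(\vt)\sin(\vp-\vt)}=\frac{2-\alpha}{2}\sqrt{U(\vartheta)-v^2}>0 .
\]
Separating variables and integrating along the whole connection (so $\vartheta$ runs over $(\vt^-,\vt^+)$ while, by the substitution $v=v(\vartheta)$ which is licit since $v$ is strictly increasing from $-\sqrt{U_{\min}}$ to $\sqrt{U_{\min}}$, the variable $v$ runs over $(-\sqrt{U_{\min}},\sqrt{U_{\min}})$) turns the statement into the exact identity
\[
\frac{2-\alpha}{2}\,(\vt^+-\vt^-)=\int_{-\sqrt{U_{\min}}}^{\sqrt{U_{\min}}}\frac{dv}{\sqrt{U(\vartheta(v))-v^2}} .
\]

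Finally I would sandwich the integral by replacing the unknown $U(\vartheta(v))$ with the two constants $U_{\min}$ and $U_{\max}$: since $v^2\le U_{\min}\le U(\vartheta(v))\le U_{\max}$, one gets
\[
\int_{-\sqrt{U_{\min}}}^{\sqrt{U_{\min}}}\frac{dv}{\sqrt{U_{\max}-v^2}}
\;\le\;\int_{-\sqrt{U_{\min}}}^{\sqrt{U_{\min}}}\frac{dv}{\sqrt{U(\vartheta(v))-v^2}}
\;\le\;\int_{-\sqrt{U_{\min}}}^{\sqrt{U_{\min}}}\frac{dv}{\sqrt{U_{\min}-v^2}} ,
\]
and the two extreme integrals are evaluated by the substitutions $v=\sqrt{U_{\max}}\sin\theta$ and $v=\sqrt{U_{\min}}\sin\theta$ to be, respectively, $2\arcsin\sqrt{U_{\min}/U_{\max}}$ and $\pi$. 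Inserting these bounds into the identity above and solving the resulting inequalities for $\alpha$ yields exactly the two claimed estimates. The only point that requires genuine care is the first step — the sign of $\sin(\vp-\vt)$ and the limiting values of $\vt$ and $v$ at the saddles, which rest on Theorem~\ref{thm:devaney} and \eqref{eq:v'} — while everything afterwards is an elementary one-variable computation; I therefore do not anticipate a serious obstacle.
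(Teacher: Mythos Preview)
Your proof is correct and follows essentially the same route as the paper: both arguments use the monotonicity of $\vartheta$ and $v$ along the heteroclinic (as in Corollary~\ref{coro:as}) to reduce to the scalar ODE $\dfrac{dv}{d\vartheta}=\dfrac{2-\alpha}{2}\sqrt{U(\vartheta)-v^2}$, and then obtain the two bounds by replacing $U(\vartheta)$ with $U_{\min}$ and $U_{\max}$ in the resulting integral. The only cosmetic difference is that you first isolate the exact identity $\dfrac{2-\alpha}{2}(\vt^+-\vt^-)=\int_{-\sqrt{U_{\min}}}^{\sqrt{U_{\min}}}\dfrac{dv}{\sqrt{U(\vartheta(v))-v^2}}$ and then sandwich it, whereas the paper passes directly to the two one-sided inequalities \eqref{eq:stimadx}, \eqref{eq:stimasx}.
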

\begin{proof}
Let $(\vt,\vp)$ be such an heteroclinic.
Reasoning as in the proof of the previous corollary,
one can deduce that both $v$ and $\vt$ are (strictly) monotone in $\tau$.
It is then possible to write $v = v(\tau(\vt)) =: \hat v(\vt)$ obtaining that
\[
\lim_{\vt \to \vt^{\pm}} \hat v(\vt) = \pm\sqrt{U_{\min}}.
\]
With this notation we can write
\[
\frac{\d \hat v}{\dtheta} = v'(\tau) \frac{\dtau}{\dtheta}
                     = \frac{2-\alpha}{2}\frac{\sqrt{U(\vartheta)}}{U(\vt)}
                       \frac{U(\vartheta)-v^2}{\sin (\vp-\vt)}
                     = \frac{2-\alpha}{2}\sqrt{U(\vartheta)-\hat v^2}.
\]
Integrating on $\vartheta \in [\vt^-,\vt^+]$, we obtain
on one hand
\begin{equation} \label{eq:stimadx}
\vt^+ - \vt^-
\leq \frac{2}{2-\alpha}\int_{-\sqrt{U_{\min}}}^{\sqrt{U_{\min}}} \frac{\d v}{\sqrt{U_{\min}-v^2}}
=    \frac{2\pi}{2-\alpha}
\end{equation}
and on the other hand
\begin{equation} \label{eq:stimasx}
\vt^+ - \vt^-
\geq \frac{2}{2-\alpha}\int_{-\sqrt{U_{\min}}}^{\sqrt{U_{\min}}} \frac{\d v}{\sqrt{U_{\max}-v^2}}
=    \frac{4}{2-\alpha}\arcsin \sqrt\frac{U_{\min}}{U_{\max}}.\qedhere
\end{equation}
\end{proof}
Using the previous arguments, together with
standard results in structural stability, it is already
possible, for appropriate values of $\alpha$, to show the existence of saddle-saddle
heteroclinic connections (see Figure \ref{fig:pplane}).
In any case, if in principle saddle-saddle connections occur only for particular values of $\alpha$,
on the other hand, whenever $\vt^\pm$ are minima for $U$, \emph{for every} $\alpha$
they correspond to saddle points. The above techniques allow us to study the dependence
of their stable and unstable manifolds on $\alpha$.
\begin{figure}[!t]
\centering
\includegraphics[width=4.5cm]{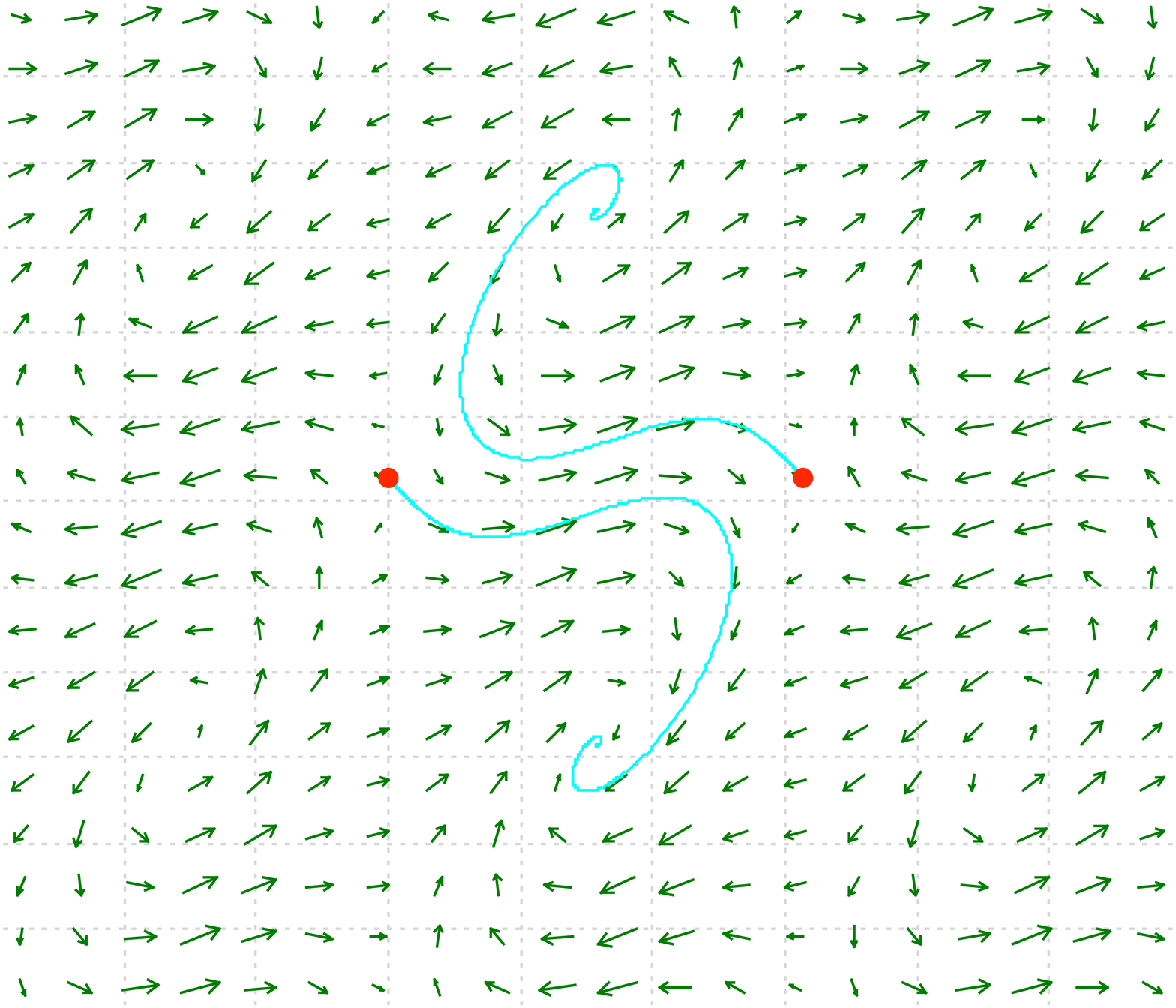}
\qquad
\includegraphics[width=4.5cm]{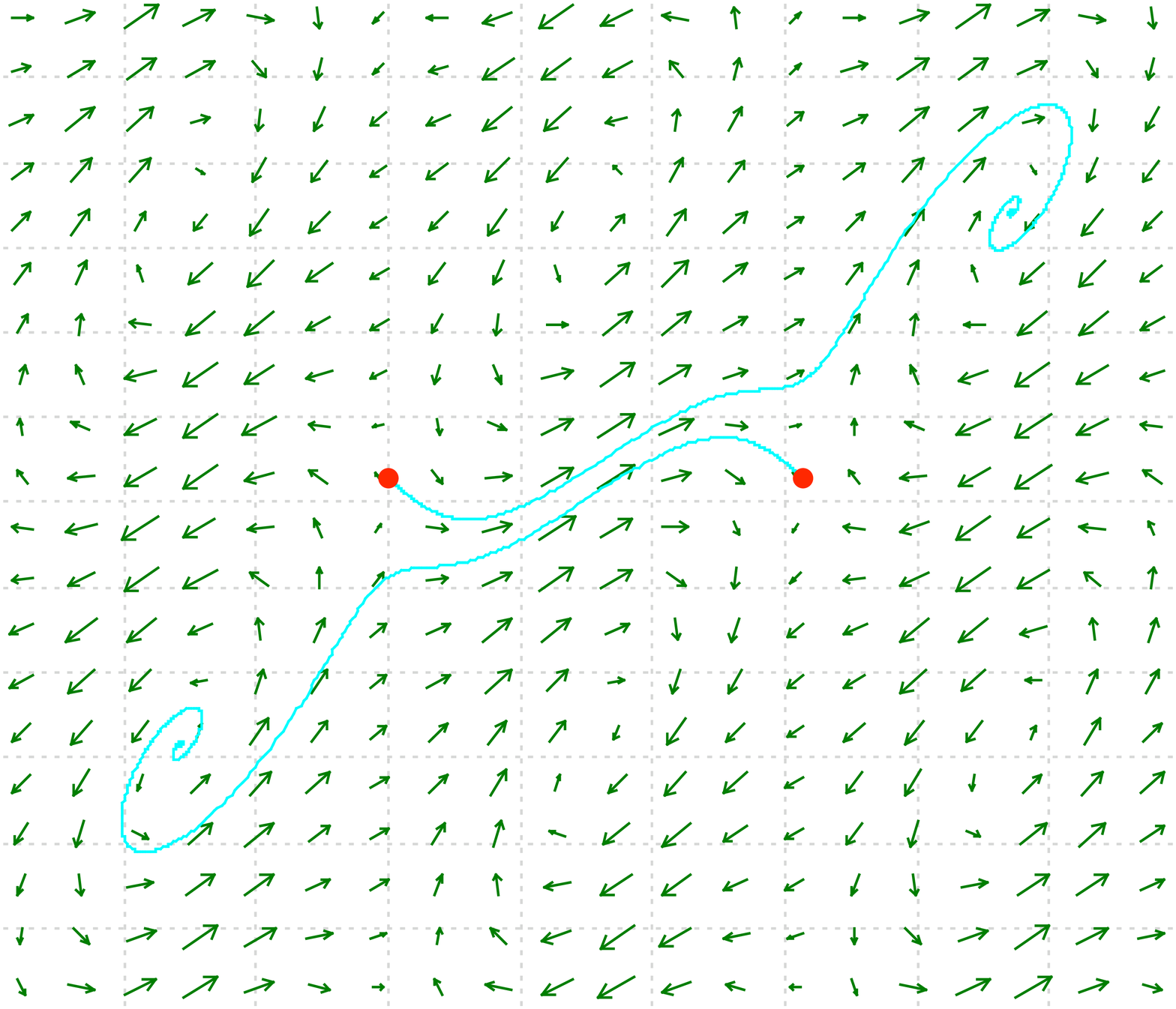}
\caption{the two pictures represent the phase portrait of the dynamical system \eqref{eq:dydthetaphi}
with $U(\vt) = 2-\cos (2\vt)$, when $\alpha = 0.5$ (at left) or $\alpha = 1$ (at right).
We focus our attention on the saddles $(0,\pi)$ and $(\pi,\pi)$ (that satisfy condition \eqref{eq:cond_cos}):
from the mutual positions of the
heteroclinic departing from $(0,\pi)$ and the one ending in $(\pi,\pi)$ we deduce that the
two vector fields are not topologically equivalent.
By structural stability we infer the existence, for some $\bar\alpha\in(0.5,1)$, of a saddle connection
between $(0,\pi)$ and $(\pi,\pi)$.\label{fig:pplane}}
\end{figure}
\begin{lemma}\label{lem:unst_man_control}
Let $(\vt,\vp)$ denote the (unique, apart from time translations) unstable trajectory
emanating from $(\vt^-,\vt^-+\pi)$ with increasing $\vt$.
Then it intersects the line $\vp=\vt+\pi/2$ in an unique point with first coordinate
$\hat \vt^- = \hat \vt^- (\alpha)$. Moreover $\vt$ is strictly increasing on $(\vt^-, \hat \vt^-]$
and on the same interval $\vp = \vp_\alpha(\vt)$ can be expressed as a function of $\vt$.
Finally,
\[
\alpha_1<\alpha_2\quad\text{ implies }\quad \hat\vt^-(\alpha_1)<\hat\vt^-(\alpha_2)
\]
and  $\vp_{\alpha_1}(\vt)<\vp_{\alpha_2}(\vt)$ on $(\vt^-,\hat\vt^-(\alpha_1)]$ (see
also Figure \ref{fig:monoton_manif}).
\end{lemma}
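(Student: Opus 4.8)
The plan is to pass to the angular variable $\psi:=\vp-\vt$, so that $(\vt,\psi)$ solves $\vt'=2U(\vt)\sin\psi$ and $\psi'=U'(\vt)\cos\psi+(\alpha-2)U(\vt)\sin\psi$, and to use the strictly monotone quantity $v=\sqrt{U(\vartheta)}\cos\psi$ and the orbit classification of Theorem~\ref{thm:devaney}. First I would analyse the saddle $(\vt^-,\pi)$: its linearisation has matrix $\bigl(\begin{smallmatrix}0 & -2U_{\min}\\ -U''(\vt^-) & (2-\alpha)U_{\min}\end{smallmatrix}\bigr)$, whose positive eigenvalue $\lambda=\lambda(\alpha)$ solves $\lambda^{2}-(2-\alpha)U_{\min}\lambda-2U_{\min}U''(\vt^-)=0$ and whose unstable eigenvector has $\psi-\pi$ and $\vt-\vt^-$ of opposite sign. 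Hence the branch with increasing $\vt$ leaves the saddle with $\psi<\pi$ (so $\sin\psi>0$, $\vt'>0$) and with slope $d\psi/d\vt\to-\lambda(\alpha)/(2U_{\min})$; since $\lambda(\alpha)$ strictly decreases in $\alpha$ (it increases with $2-\alpha$), I record that $\alpha_{1}<\alpha_{2}$ forces $\psi_{\alpha_{1}}<\psi_{\alpha_{2}}$ for $\vt$ slightly larger than $\vt^-$.

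Next, along the orbit $v$ is strictly increasing by \eqref{eq:v'} and $v(-\infty)=\sqrt{U_{\min}}\cos\pi=-\sqrt{U_{\min}}$, hence $v(\tau)>-\sqrt{U_{\min}}\ge-\sqrt{U(\vt(\tau))}$, i.e.\ $\cos\psi(\tau)>-1$, for every $\tau$; so $\psi$ never returns to $\pi$ and, being continuous and $<\pi$ near $-\infty$, stays $<\pi$. As long as $\psi>\pi/2$ we then have $\psi\in(\pi/2,\pi)$, hence $\vt'>0$, $\vt$ is strictly increasing, and $\vp=\vp_{\alpha}(\vt)$, $\psi=\psi_{\alpha}(\vt)$ are well defined there. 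By Theorem~\ref{thm:devaney} the orbit is a heteroclinic, so $\psi(\tau)\to k\pi$ as $\tau\to+\infty$ for some $k\in\ZZ$; since $\psi<\pi$ the limit is $\le\pi$, and it cannot equal $\pi$ (that would give $v(+\infty)=-\sqrt{U(\vt_{\infty})}\le-\sqrt{U_{\min}}$, against the strict increase of $v$), so $k\pi\le 0$. Thus $\psi$ runs from $\pi$ at $-\infty$ to a value $\le 0$ at $+\infty$ and, being continuous, attains $\pi/2$ at some finite $\tau^{*}$; set $\hat\vt^-(\alpha):=\vt(\tau^{*})$. The intersection with $\{\vp=\vt+\pi/2\}$ is unique because at any point with $\psi=\pi/2$ one has $\psi'=(\alpha-2)U(\vt)<0$, so every crossing is transversal and downward. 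On $(-\infty,\tau^{*}]$ then $\sin\psi>0$, so $\vt$ is strictly increasing with image $(\vt^-,\hat\vt^-(\alpha)]$, which proves the first part of the lemma.

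For the dependence on $\alpha$, note that on $\psi\in(0,\pi)$ the graph $\psi_{\alpha}(\vt)$ solves
\[
\frac{d\psi}{d\vt}=f(\vt,\psi,\alpha):=\frac{U'(\vt)}{2U(\vt)}\,\frac{\cos\psi}{\sin\psi}+\frac{\alpha-2}{2},\qquad \partial_{\alpha}f\equiv\tfrac12>0 .
\]
Fix $\alpha_{1}<\alpha_{2}$. By the first paragraph $\psi_{\alpha_{1}}<\psi_{\alpha_{2}}$ near $\vt^-$, and a first-contact argument propagates this: if $\vt_{0}$ were the first value with $\psi_{\alpha_{1}}(\vt_{0})=\psi_{\alpha_{2}}(\vt_{0})=:\psi_{0}$ (possibly $\vt_{0}=\hat\vt^-(\alpha_{1})$, where $\psi_{0}=\pi/2$), then $\psi_{\alpha_{2}}-\psi_{\alpha_{1}}$ would vanish at $\vt_{0}$ after being positive, so its derivative there would be $\le 0$, whereas it equals $f(\vt_{0},\psi_{0},\alpha_{2})-f(\vt_{0},\psi_{0},\alpha_{1})=\tfrac{\alpha_{2}-\alpha_{1}}{2}>0$, a contradiction. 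Hence $\psi_{\alpha_{1}}<\psi_{\alpha_{2}}$ on $(\vt^-,\hat\vt^-(\alpha_{1})]$; in particular $\psi_{\alpha_{2}}(\hat\vt^-(\alpha_{1}))>\pi/2$, and since $\psi_{\alpha_{2}}$ meets $\pi/2$ only once, transversally from above, this forces $\hat\vt^-(\alpha_{2})>\hat\vt^-(\alpha_{1})$. Finally $\vp_{\alpha_{i}}(\vt)=\vt+\psi_{\alpha_{i}}(\vt)$ gives $\vp_{\alpha_{1}}(\vt)<\vp_{\alpha_{2}}(\vt)$ on $(\vt^-,\hat\vt^-(\alpha_{1})]$.

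The main obstacle is the second step: ruling out that the unstable branch fails to meet $\{\psi=\pi/2\}$ — e.g.\ drifting with $\vt\to+\infty$ while $\psi$ stays in $(\pi/2,\pi)$ — and it is precisely here that the classification of Theorem~\ref{thm:devaney} (every orbit is a heteroclinic) together with the strict monotonicity of $v$ (which excludes $\psi(+\infty)\ge\pi$) is indispensable. A secondary technical point is making the $\alpha$-ordering of the two unstable branches at the saddle rigorous, for which the $\cont^{1}$ unstable manifold theorem and the monotonicity of the unstable eigenvalue $\lambda(\alpha)$ suffice.
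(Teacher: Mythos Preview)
Your proof is correct and follows essentially the same approach as the paper's: linearise at the saddle to order the unstable branches for different $\alpha$ near $\vt^-$, use the monotone quantity $v$ to control the crossing with $\vp=\vt+\pi/2$, express the unstable branch as a graph over $\vt$, and run a first-contact argument on the resulting scalar ODE (your $d\psi/d\vt$ equation is just the paper's equation for $d\vp_\alpha/d\vt$ shifted by $1$). The only noteworthy difference is that you are more explicit about the \emph{existence} of the crossing---invoking Devaney's heteroclinic classification to force $\psi(+\infty)\le 0$---whereas the paper simply asserts that since $v$ is strictly increasing from $-\sqrt{U_{\min}}$ there is a unique $\hat\tau$ with $v(\hat\tau)=0$; your argument fills in that step cleanly.
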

\begin{proof}
To start with we observe that, for any $\alpha \in (0,2)$, the linearized matrix for \eqref{eq:dydthetaphi} at
$(\vt^-, \vt^- + \pi)$ 
is
\[
J^- = U_{\min}\left( \begin{array}{cc} 2   &   -2 \\   \alpha - \mu^-   &   -\alpha \end{array} \right),
\]
where $\mu^- = U''(\vt^{-})/U_{\min}$.
The eigendirection correspondent to the heteroclinic emanating from $(\vt^-, \vt^- + \pi)$ is
$v^- =(1,v^-_2) = (1,1-\lambda^-_+/2)$, where
$\lambda^-_+ = \left( 2-\alpha + \sqrt{(2-\alpha)^2+8\mu^-} \right)/2$
is the positive eigenvalue of $J^-$; hence
\[
v_2^- = v_2^-(\alpha) = \frac12 + \frac{\alpha}{4} -\frac14 \sqrt{(2-\alpha)^2+8\mu^-}.
\]
On one hand, we have that
\[
\frac{\d}{\d \alpha}v_2^-(\alpha)
= \frac14+\frac{2-\alpha}{4\sqrt{(2-\alpha)^2+8\mu^-}}>0,
\]
implying that, for different values of $\alpha$, the corresponding unstable
trajectories are ordered as claimed near $(\vt^-, \vt^- + \pi)$.
On the other hand, since
$v_2^- <1$, we have that the trajectory is contained in the strip $\pi/2<\vp-\vt<\pi$
for large negative times.

Now recall that, as above, $v(-\infty)=-\sqrt{U_{\min}}$ and that both
$\vt$ and $v$ are strictly increasing whenever $v$ is smaller than $\sqrt{U_{\min}}$.
We deduce that there exists exactly one $\hat\tau$ such that
\[
v(\hat\tau) =0,\text{ or equivalently }\quad \vp(\hat\tau)=\vt(\hat\tau)+\frac{\pi}{2}.
\]
As a consequence the value $\hat\vt^-=\vt(\hat\tau)$ is well defined and, reasoning as in Corollary \ref{coro:as} and in Lemma \ref{lem:stimaalpha}, we can invert
$\vt=\vt(\tau)$ on $(-\infty,\hat \tau]$. We deduce that we can write
\begin{equation}\label{eq:ODE_x_vp}
\vp_\alpha(\vt):=\vp(\tau(\vt)),\text{ where }\quad
\frac{\d\varphi_\alpha}{\d\vartheta} = \frac{\alpha}{2}+
\frac{U'(\vt)}{2U(\vt)}\cotan(\vp_\alpha-\vt)\text{ on }(\vt^-,\hat\vt^-].
\end{equation}
To conclude the proof we have to show that, if $\alpha_1<\alpha_2$, then
$\vp_{\alpha_1}(\vt)<\vp_{\alpha_2}(\vt)$ where they are defined. To this aim,
let by contradiction $\vt^*>\vt^-$ be such that $\vp_{\alpha_1}(\vt)
<\vp_{\alpha_2}(\vt)$ on $(\vt^-,\vt^*)$, and $\vp_{\alpha_1}(\vt^*)=\vp_{\alpha_2}(\vt*)$. But the above differential equation implies
\[
\frac{\d(\varphi_{\alpha_2}-\varphi_{\alpha_1})}{\d\vartheta}(\vt^*) = \frac{\alpha_2-\alpha_1}{2}>0,
\]
a contradiction.
\end{proof}
\begin{figure}[!t]
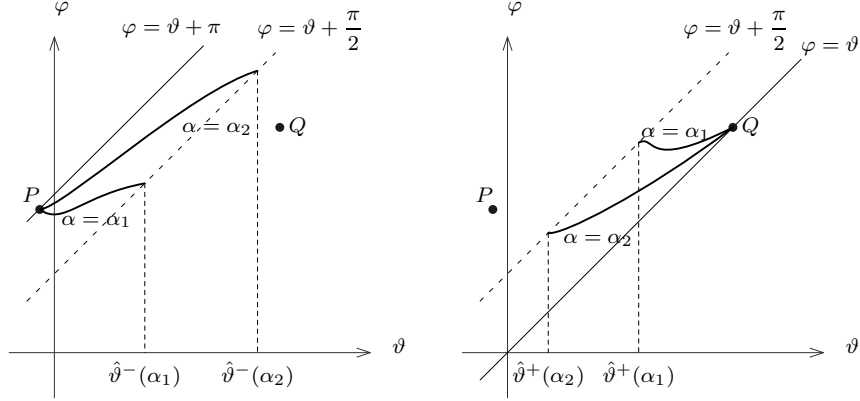

\begin{center}
\begin{tabular}{ccc}
\begin{texdraw}
\drawdim cm  \setunitscale 0.6
\linewd 0.01 \setgray 0.2
\move (0 -1) \arrowheadtype t:V \arrowheadsize l:0.3 w:0.2 \avec (0 7)
\htext (0 7.5) {\footnotesize{$\varphi$}}
\move (-1 0) \arrowheadtype t:V \arrowheadsize l:0.3 w:0.2 \avec (7 0)
\htext (7.5 0) {\footnotesize{$\vt$}}
\linewd 0.02 \setgray 0
\move (-.6 2.9) \lvec (3.3 6.8)
\htext (1.5 7) {\footnotesize{$\vp=\vt+\pi$}}
\lpatt (0.1 0.2) \move (-.6 1.15) \lvec (4.9 6.65)
\htext (4.5 6.7) {\footnotesize{$\displaystyle\vp=\vt+\frac{\pi}{2}$}}
\lpatt()
\linewd 0.03
\move (-.33 3.17) \fcir f:0.1 r:0.1
\textref h:R v:C \htext (-.33 3.5) {\footnotesize{$P$}}
\move (5 5) \fcir f:0.1 r:0.1
\textref h:L v:C \htext (5.2 5) {\footnotesize{$Q$}}
\linewd 0.04
\move (-.33 3.17) \clvec (0.2 2.8)(0.6 3.5)(2 3.75)
\textref h:L v:C \htext (.15 2.9) {\footnotesize{$\alpha=\alpha_1$}}
\textref h:C v:T \htext (2 -0.2) {\footnotesize{$\hat\vt^-(\alpha_1)$}}
\linewd 0.015 \lpatt (0.1 0.1) \move (2 3.75) \lvec (2 0) \lpatt ()
\linewd 0.04
\move (-.33 3.17) \clvec (0.2 3.2)(3 5.8)(4.5 6.25)
\textref h:C v:C \htext (3.6 4.95) {\footnotesize{$\alpha=\alpha_2$}}
\textref h:C v:T \htext (4.5 -0.2) {\footnotesize{$\hat\vt^-(\alpha_2)$}}
\linewd 0.015 \lpatt (0.1 0.1) \move (4.5 6.25) \lvec (4.5 0)
\end{texdraw}

& \hspace{0.1cm} &

\begin{texdraw}
\drawdim cm  \setunitscale 0.6
\linewd 0.01 \setgray 0.2
\move (0 -1) \arrowheadtype t:V \arrowheadsize l:0.3 w:0.2 \avec (0 7)
\htext (0 7.5) {\footnotesize{$\varphi$}}
\move (-1 0) \arrowheadtype t:V \arrowheadsize l:0.3 w:0.2 \avec (7 0)
\htext (7.5 0) {\footnotesize{$\vt$}}
\linewd 0.02 \setgray 0
\move (-.6 -.6) \lvec (6.5 6.5)
\htext (6.5 6.6) {\footnotesize{$\vp=\vt$}}
\lpatt (0.1 0.2) \move (-.6 1.15) \lvec (4.9 6.65)
\htext (3.9 6.7) {\footnotesize{$\displaystyle\vp=\vt+\frac{\pi}{2}$}}
\lpatt()
\linewd 0.03
\move (-.33 3.17) \fcir f:0.1 r:0.1
\textref h:R v:C \htext (-.33 3.5) {\footnotesize{$P$}}
\move (5 5) \fcir f:0.1 r:0.1
\textref h:L v:C \htext (5.2 5) {\footnotesize{$Q$}}
\linewd 0.04
\move (2.9 4.65) \clvec (3.3 4.85)(3 4)(5 5)
\textref h:R v:C \htext (4.5 4.85) {\footnotesize{$\alpha=\alpha_1$}}
\textref h:C v:T \htext (2.9 -0.2) {\footnotesize{$\hat\vt^+(\alpha_1)$}}
\linewd 0.015 \lpatt (0.1 0.1) \move (2.9 4.65) \lvec (2.9 0) \lpatt ()
\linewd 0.04
\move (0.9 2.65) \clvec (1.2 2.6)(3 3.5)(5 5)
\textref h:C v:C \htext (2 2.5) {\footnotesize{$\alpha=\alpha_2$}}
\textref h:C v:T \htext (.9 -0.2) {\footnotesize{$\hat\vt^+(\alpha_2)$}}
\linewd 0.015 \lpatt (0.1 0.1) \move (0.9 2.65) \lvec (0.9 0)
\end{texdraw}
\end{tabular}
\end{center}
\caption{the unstable (resp. stable) manifold emanating from $P=(\vt^-,\vt^-+\pi)$
(resp. entering in $Q=(\vt^+,\vt^+)$), and its dependence on $\alpha$, according
to Lemma \ref{lem:unst_man_control} (resp. Lemma \ref{lem:st_man_control}).
Here $\alpha_1<\alpha_2$.\label{fig:monoton_manif}}
\end{figure}
Arguing exactly as above one can prove analogous properties for the stable manifolds.
\begin{lemma}\label{lem:st_man_control}
Let $(\vt,\vp)$ denote the (unique, apart from time translations) stable trajectory
entering in $(\vt^+,\vt^+)$ with increasing $\vt$.
Then it intersects the line $\vp=\vt+\pi/2$ in an unique point with first coordinate
$\hat \vt^+ = \hat \vt^+ (\alpha)$. Moreover $\vt$ is strictly increasing on $[\hat\vt^+, \vt^-)$
and on the same interval $\vp = \vp_\alpha(\vt)$ can be expressed as a function of $\vt$.
Finally,
\[
\alpha_1<\alpha_2\quad\text{ implies }\quad \hat\vt^+(\alpha_1)>\hat\vt^+(\alpha_2)
\]
and  $\vp_{\alpha_1}(\vt)>\vp_{\alpha_2}(\vt)$ on $[\hat\vt^+(\alpha_1),\vt^+)$.
\end{lemma}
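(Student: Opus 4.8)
The plan is to run, up to time-reversal, the argument of Lemma~\ref{lem:unst_man_control}, using that the stable manifold of the saddle $(\vt^+,\vt^+)$ is approached as $\tau\to+\infty$ and, on the branch of interest, from values $\vt<\vt^+$.

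\emph{Linearisation at $(\vt^+,\vt^+)$.} Here $\sin(\vp-\vt)=0$, $\cos(\vp-\vt)=1$, and a direct computation gives for \eqref{eq:dydthetaphi} the linearised matrix
\[
J^+ = U_{\min}\left( \begin{array}{cc} -2 & 2 \\ \mu^+-\alpha & \alpha \end{array} \right),\qquad \mu^+:=\frac{U''(\vt^+)}{U_{\min}}>0 .
\]
Its trace $U_{\min}(\alpha-2)$ and its determinant $-2\mu^+U_{\min}^2$ are both negative, so $(\vt^+,\vt^+)$ is a saddle for every $\alpha\in(0,2)$; the negative eigenvalue is $\lambda^+_-=\tfrac{U_{\min}}{2}\bigl[(\alpha-2)-\sqrt{(2-\alpha)^2+8\mu^+}\,\bigr]$, with eigendirection $v^+=(1,v_2^+)$ where $v_2^+=\tfrac12+\tfrac{\alpha}{4}-\tfrac14\sqrt{(2-\alpha)^2+8\mu^+}$. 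One checks $v_2^+<1$, so that on the stable branch along which $\vt\nearrow\vt^+$ (i.e.\ the saddle is reached from $\vt<\vt^+$) one has $\vp-\vt\to0^+$; moreover $\tfrac{\d}{\d\alpha}v_2^+=\tfrac14+\tfrac{2-\alpha}{4\sqrt{(2-\alpha)^2+8\mu^+}}>0$, so for larger $\alpha$ this branch leaves the saddle with a larger slope $v_2^+$ in the $(\vt,\vp)$ plane. \textbf{The key point} is that, since the relevant points have $\vt-\vt^+<0$, multiplying the inequality $v_2^+(\alpha_1)<v_2^+(\alpha_2)$ by $\vt-\vt^+$ \emph{reverses} it: thus $\vp_{\alpha_1}(\vt)>\vp_{\alpha_2}(\vt)$ for $\vt$ slightly below $\vt^+$ when $\alpha_1<\alpha_2$, the opposite ordering to the unstable case, and this is what produces the sign reversal in the statement.

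\emph{Defining $\vp_\alpha$ and $\hat\vt^+(\alpha)$.} Along this branch $v(\tau)=\sqrt{U(\vt)}\cos(\vp-\vt)$ solves \eqref{eq:v'} with $v(+\infty)=\sqrt{U_{\min}}$; since $v^2<U_{\min}\le U$ at every finite $\tau$, $v$ is strictly increasing, hence $\sin(\vp-\vt)\neq0$ and $\vt$ is strictly monotone — increasing, by the choice of branch — so $\sin(\vp-\vt)>0$ throughout. Therefore there is a unique $\hat\tau^+$ with $v(\hat\tau^+)=0$, i.e.\ $\vp(\hat\tau^+)=\vt(\hat\tau^+)+\pi/2$; set $\hat\vt^+(\alpha):=\vt(\hat\tau^+)<\vt^+$. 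Inverting $\vt=\vt(\tau)$ on $[\hat\tau^+,+\infty)$ one defines $\vp_\alpha(\vt):=\vp(\tau(\vt))$ on $[\hat\vt^+(\alpha),\vt^+)$, which solves \eqref{eq:ODE_x_vp} with $\vp_\alpha(\vt)\to\vt^+$ as $\vt\to\vt^+$; note that between the crossing and $\vt^+$ one has $v>0$, hence $\vp_\alpha-\vt\in(0,\pi/2)$ strictly there.

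\emph{Monotonicity in $\alpha$ and conclusion.} Fix $\alpha_1<\alpha_2$. The linearisation step gives $\vp_{\alpha_1}>\vp_{\alpha_2}$ near $\vt^+$; if this ever failed, let $\vt^*<\vt^+$ be the largest point where $\vp_{\alpha_1}(\vt^*)=\vp_{\alpha_2}(\vt^*)$ (both being defined there). Then $g:=\vp_{\alpha_2}-\vp_{\alpha_1}$ satisfies $g(\vt^*)=0$ and $g<0$ on $(\vt^*,\vt^+)$, so $g'(\vt^*)\le0$; but the $\cotan$ terms in \eqref{eq:ODE_x_vp} agree at $\vt^*$, whence $g'(\vt^*)=(\alpha_2-\alpha_1)/2>0$, a contradiction. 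Hence $\vp_{\alpha_1}>\vp_{\alpha_2}$ on every interval $(\vt_0,\vt^+)$ on which both are defined as functions of $\vt$. Finally, if we had $\hat\vt^+(\alpha_1)\le\hat\vt^+(\alpha_2)$, then on $(\hat\vt^+(\alpha_2),\vt^+)$ we get $\vp_{\alpha_1}>\vp_{\alpha_2}$, so evaluating at $\vt=\hat\vt^+(\alpha_2)$ (where $\vp_{\alpha_2}-\vt=\pi/2$) gives $\vp_{\alpha_1}(\hat\vt^+(\alpha_2))-\hat\vt^+(\alpha_2)\ge\pi/2$, contradicting $\vp_{\alpha_1}-\vt<\pi/2$ on $(\hat\vt^+(\alpha_1),\vt^+)\ni\hat\vt^+(\alpha_2)$; the equality case $\hat\vt^+(\alpha_1)=\hat\vt^+(\alpha_2)$ is excluded by running the previous comparison at $\vt^*=\hat\vt^+(\alpha_1)$. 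Thus $\hat\vt^+(\alpha_1)>\hat\vt^+(\alpha_2)$, so $[\hat\vt^+(\alpha_1),\vt^+)\subset[\hat\vt^+(\alpha_2),\vt^+)$ and $\vp_{\alpha_1}>\vp_{\alpha_2}$ holds on all of $[\hat\vt^+(\alpha_1),\vt^+)$. The only genuinely delicate point, as opposed to bookkeeping, is precisely the time-reversal together with the sign of $\vt-\vt^+$, which turns the (identical) eigendirection monotonicity into the \emph{reversed} monotonicity of $\hat\vt^+$; everything else copies Lemmas~\ref{lem:stimaalpha} and~\ref{lem:unst_man_control}.
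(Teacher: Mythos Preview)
Your proof is correct and is precisely the time-reversed adaptation of Lemma~\ref{lem:unst_man_control} that the paper has in mind; indeed, the paper gives no separate argument for this lemma beyond the sentence ``Arguing exactly as above one can prove analogous properties for the stable manifolds.'' Your explicit identification of the sign reversal --- that the identical monotonicity $\tfrac{\d}{\d\alpha}v_2^+>0$, combined with $\vt-\vt^+<0$ on the relevant branch, flips the ordering of the $\vp_\alpha$ and hence of $\hat\vt^+(\alpha)$ --- is exactly the point one must check, and the rest is a faithful transcription of the unstable-manifold proof.
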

By uniqueness, the above unstable/stable trajectories can not be crossed by any other orbit.
To be more precise, we have the following.
\begin{corollary}\label{coro:barriere}
Let
\[
\vt^*\in[\vt^-,\vt^+]\text{ such that } U'(\vt^*)=0
\]
be \emph{any} central configuration, and $\gamma$ be a trajectory of system \eqref{eq:dydthetaphi} emanating from $(\vt^*,\vt^*+\pi)$ and intersecting the
set
\[
 \Sigma:=\left\{(\vt,\vp):\, \vt^-\leq\vt\leq\vt^+,\,\vt+\frac{\pi}{2}\leq\vp\leq\vt
 +\frac{3\pi}{2}\right\}.
\]
Then, if $\gamma$ exits from $\Sigma$, it must cross either the union of the segments
\[
 \left\{\hat\vt^-(\alpha)\leq\vt\leq\vt^+,\,\vp=\vt+\frac{\pi}{2}\right\},
 \quad
 \left\{\vt^-\leq\vt\leq\hat\vt^+(\alpha),\,\vp=\vt+\frac{3\pi}{2}\right\},
 \]
or the vertical lines $\vt=\vt^\pm$.
Analogously, for a trajectory asymptotic (in the future) to $(\vt^*,\vt^*)$,
the entering set in
\[
\Sigma':=\left\{(\vt,\vp):\, \vt^-\leq\vt\leq\vt^+,\,\vt-\frac{\pi}{2}\leq\vp\leq\vt
+\frac{\pi}{2}\right\}
\]
is the union of the segments
 \[
 \left\{\hat\vt^+(\alpha)\leq\vt\leq\vt^+,\,\vp=\vt-\frac{\pi}{2}\right\},
 \quad
 \left\{\vt^-\leq\vt\leq\hat\vt^-(\alpha),\,\vp=\vt+\frac{\pi}{2}\right\},
 \]
and of the vertical lines $\vt=\vt^\pm$.
\end{corollary}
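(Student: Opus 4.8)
The plan is to read off the direction of the flow of \eqref{eq:dydthetaphi} along the four sides of $\Sigma$, and then to use the unstable trajectories of the two ``corner'' saddles $P^{\pm}:=(\vt^{\pm},\vt^{\pm}+\pi)$ as impenetrable barriers, appealing to uniqueness of solutions of \eqref{eq:dydthetaphi}; the symmetric statement for $\Sigma'$ will then follow by a reflection.

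First I would record the flow on $\partial\Sigma$. On the bottom side $\{\vp=\vt+\pi/2\}$ and on the top side $\{\vp=\vt+3\pi/2\}$ (with $\vt\in(\vt^-,\vt^+)$) one has $(\vp-\vt)'=(\alpha-2)U(\vt)<0$ and $(\vp-\vt)'=(2-\alpha)U(\vt)>0$ respectively; equivalently $v=0$ there while $v$ is strictly increasing by \eqref{eq:v'}. Hence every trajectory meeting these two sides crosses them transversally, \emph{leaving} $\Sigma$. On the left side $\{\vt=\vt^-\}$ one has $\vt'=2U_{\min}\sin(\vp-\vt^-)$, which points into $\Sigma$ for $\vp-\vt^-\in(\pi/2,\pi)$ and out of $\Sigma$ for $\vp-\vt^-\in(\pi,3\pi/2)$, with the equilibrium $P^-$ in between; symmetrically, on the right side $\vt'$ points out of $\Sigma$ for $\vp-\vt^+\in(\pi/2,\pi)$ and into $\Sigma$ for $\vp-\vt^+\in(\pi,3\pi/2)$, with $P^+$ in between. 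Two consequences: no trajectory can reach, from within $\Sigma$, the open ``lower--left'' segment $\{\vt=\vt^-,\ \pi/2<\vp-\vt^-<\pi\}$ nor the open ``upper--right'' segment $\{\vt=\vt^+,\ \pi<\vp-\vt^+<3\pi/2\}$; and a trajectory touching any other point of $\partial\Sigma$ must leave $\Sigma$ there. In particular, if $\gamma$ is as in the statement and $\bar\tau$ is its first exit time, then $\gamma$ stays in the interior of $\Sigma$ on $(-\infty,\bar\tau)$, where moreover $\vt^-<\vt(\tau)<\vt^+$.

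Next I would set up the barriers. Let $W^-$ be the unstable branch emanating from $P^-$ with increasing $\vt$: by Lemma \ref{lem:unst_man_control} it is the graph of a function over $[\vt^-,\hat\vt^-(\alpha)]$, joins $P^-$ to the point $(\hat\vt^-(\alpha),\hat\vt^-(\alpha)+\pi/2)$ of the bottom side, and lies in the lower half $\{\pi/2\le\vp-\vt\le\pi\}$ of $\Sigma$ — the bound $\vp-\vt<\pi$ at finite times following from $v>-\sqrt{U_{\min}}\ge-\sqrt{U(\vt)}$ along $W^-$, whence $\cos(\vp-\vt)>-1$. Dually, let $W^+$ be the unstable branch emanating from $P^+$ with decreasing $\vt$: the reversing symmetry $\sigma:(\vt,\vp)\mapsto(\vt,\vp+\pi)$, which conjugates \eqref{eq:dydthetaphi} to its time reversal, together with the $2\pi$--periodicity in $\vp$, identifies $W^+$ with the translate by $(0,\pi)$ of the stable branch entering $(\vt^+,\vt^+)$ with increasing $\vt$, so that Lemma \ref{lem:st_man_control} applies: $W^+$ is the graph of a function over $[\hat\vt^+(\alpha),\vt^+]$, joins $P^+$ to the point $(\hat\vt^+(\alpha),\hat\vt^+(\alpha)+3\pi/2)$ of the top side, and lies in the upper half $\{\pi\le\vp-\vt\le3\pi/2\}$ of $\Sigma$. (If $\hat\vt^-(\alpha)>\vt^+$, resp.\ $\hat\vt^+(\alpha)<\vt^-$, the corresponding barrier exits $\Sigma$ through a vertical side before reaching the bottom, resp.\ top, side, and the argument below only simplifies.) Let $R^-$, resp.\ $R^+$, be the open region of $\Sigma$ enclosed by $W^-$, the lower--left segment and the portion $\{\vt^-\le\vt<\hat\vt^-(\alpha)\}$ of the bottom side, resp.\ by $W^+$, the upper--right segment and the portion $\{\hat\vt^+(\alpha)<\vt\le\vt^+\}$ of the top side.

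Finally I would conclude. By the previous steps it suffices to show that $\gamma(\bar\tau)$ lies neither on the open bottom segment $\{\vp=\vt+\pi/2,\ \vt<\hat\vt^-(\alpha)\}$ nor on the open top segment $\{\vp=\vt+3\pi/2,\ \vt>\hat\vt^+(\alpha)\}$. If it lay on the former, then, since a one--sided neighbourhood of that segment in $\Sigma$ lies in $R^-$, the arc of $\gamma$ just before $\bar\tau$ would lie in $R^-$, while its $\alpha$--limit $(\vt^*,\vt^*+\pi)$ does not, so $\gamma$ enters $R^-$; but $\gamma$ cannot cross $W^-$ (distinct orbits are disjoint, and $\gamma\neq W^-$ unless $\vt^*=\vt^-$, handled below), cannot cross the bottom side inward, and cannot touch the lower--left segment (because $\vt>\vt^-$ on $(-\infty,\bar\tau)$), so it cannot enter $R^-$ — a contradiction; the top case is symmetric, with $W^+$ and $R^+$. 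If $\vt^*=\vt^-$, then $\gamma$ is one of the two unstable branches of the saddle $P^-$: the one with increasing $\vt$ is $W^-$ itself, which exits $\Sigma$ exactly at $(\hat\vt^-(\alpha),\hat\vt^-(\alpha)+\pi/2)$ or across the right side, both admissible; the one with decreasing $\vt$ starts at $\vt<\vt^-$ with $\vp-\vt>\pi$, and since $U(\vt^-)=U_{\min}$ it stays in $\{\vp-\vt>\pi\}$ by the same bound on $v$, hence can neither re-enter $\Sigma$ across the bottom side nor, arguing as for $R^+$, exit across the forbidden top segment; the case $\vt^*=\vt^+$ is analogous. The statement for $\Sigma'$ is then obtained by applying $\sigma$, which maps $\Sigma'$ onto $\Sigma$ and the trajectories asymptotic in the future to $(\vt^*,\vt^*)$ onto the trajectories emanating from $(\vt^*,\vt^*+\pi)$, interchanging ``entering'' with ``exiting''. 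The delicate point in this scheme is the boundary analysis of the second step — that the two ``inward'' sides are genuinely unreachable from within $\Sigma$ and that a trajectory touching any other boundary point must leave — since it is exactly this that forces $\gamma$ through $R^{\pm}$ before a forbidden exit.
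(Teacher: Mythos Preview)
Your proof is correct and follows essentially the same idea as the paper's: the unstable branches $W^-=\gamma_1$ and $W^+=\gamma_2$ emanating from the corner saddles $(\vt^-,\vt^-+\pi)$ and $(\vt^+,\vt^++\pi)$ act as barriers, and uniqueness for \eqref{eq:dydthetaphi} prevents $\gamma$ from crossing them; the $\Sigma'$ statement then follows from the time-reversing symmetry $(\vt,\vp,\tau)\mapsto(\vt,\vp+\pi,-\tau)$. The paper's own argument is much terser---it simply observes that for $\vt^-<\vt^*<\vt^+$ the trajectory $\gamma$ must lie ``above $\gamma_1$ and below $\gamma_2$''---whereas you supply the boundary analysis of $\partial\Sigma$, the explicit confinement of $W^\pm$ to the lower/upper halves of $\Sigma$ via the bound $v>-\sqrt{U_{\min}}$, and the region-trapping argument with $R^\pm$ that makes this rigorous.
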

\begin{proof}
We prove only the first part. If $\vt^*=\vt^-$ then $\gamma\equiv\gamma_1$, the unique unstable trajectory emanating from the corresponding saddle point with $\vt$ increasing
considered in Lemma \ref{lem:unst_man_control}; but then it exits from $\Sigma$ through
the point $(\hat\vt^-(\alpha),\hat\vt^-(\alpha)+\pi/2)$. In the same way, if
$\vt^*=\vt^+$ then $\gamma\equiv\gamma_2$, the unique unstable trajectory emanating
from the corresponding saddle point with $\vt$ decreasing (recall that, if
$(\vt(\tau),\vp(\tau))$ solves \eqref{eq:dydthetaphi}, then also $(\vt(-\tau),\vp(-\tau)
+\pi)$ does); in such a case the exit point is $(\hat\vt^+(\alpha),\hat\vt^+(\alpha)+
3\pi/2)$. Finally, if $\vt^-<\vt^*<\vt^+$, then $\gamma$ must lie above $\gamma_1$ and
below $\gamma_2$, and the assertion follows.
\end{proof}
The angles $\hat \vt^\pm (\alpha)$ defined above represent the (oriented) parabolic
\emph{apsidal angles} swept by the parabolic arc from the infinity up to the
pericenter. As a consequence of the previous arguments, the appearance of a parabolic
trajectory associated with the asymptotic directions  $(\vt^-,\vt^+)$, or,
equivalently, the existence of a
heteroclinic connection between $(\vt^-, \vt^- + \pi)$ and $(\vt^+, \vt^+)$ can be
expressed in terms of the corresponding apsidal angles. Summing up, we have proved the following.
\begin{proposition}\label{propo:unique_baralfa}
Let $U \in \Uh$, $\vt^- < \vt^+ \in \Theta$, and the
monotone functions $\hat\vt^-(\alpha)$, $\hat\vt^+(\alpha)$ be defined as in Lemmata
\ref{lem:unst_man_control}, \ref{lem:st_man_control}, respectively. Then system \eqref{eq:dydthetaphi} admits a heteroclinic connection between $(\vt^-, \vt^- + \pi)$ and $(\vt^+, \vt^+)$ for some value $\alpha=\bar\alpha \in (0,2)$ if and only if
\[
\hat\vt^-(\bar\alpha)=\hat\vt^+(\bar\alpha).
\]
In particular, if such a value exists, then it is unique.
\end{proposition}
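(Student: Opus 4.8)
The plan is to read Proposition~\ref{propo:unique_baralfa} as a direct consequence of the structure established in Lemmata~\ref{lem:unst_man_control} and \ref{lem:st_man_control}, together with the monotonicity in $\alpha$ recorded there. First I would fix $U\in\Uh$ and $\vt^-<\vt^+$ in $\Theta$, and for each $\alpha\in(0,2)$ consider the two distinguished trajectories of \eqref{eq:dydthetaphi}: the unstable one $\gamma_1^\alpha$ emanating from the saddle $(\vt^-,\vt^-+\pi)$ with $\vt$ increasing, and the stable one $\gamma_2^\alpha$ entering the saddle $(\vt^+,\vt^+)$ with $\vt$ increasing. By Corollary~\ref{coro:as} (and the discussion preceding it), any heteroclinic connection between these two saddles realizing a parabolic orbit must have $\vt$ strictly increasing from $\vt^-$ to $\vt^+$ and must cross the line $\vp=\vt+\pi/2$ exactly once, since $v$ passes strictly through $0$; hence such a connection coincides, near $(\vt^-,\vt^-+\pi)$, with $\gamma_1^\alpha$ and, near $(\vt^+,\vt^+)$, with $\gamma_2^\alpha$. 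Therefore a heteroclinic exists for the value $\alpha$ if and only if $\gamma_1^\alpha$ and $\gamma_2^\alpha$ are the same trajectory, and by the uniqueness of the crossing point on $\{\vp=\vt+\pi/2\}$ this happens if and only if they cross that line at the same point, i.e.\ $\hat\vt^-(\alpha)=\hat\vt^+(\alpha)$.

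Second, I would argue the ``only if'' direction carefully: if $\hat\vt^-(\bar\alpha)=\hat\vt^+(\bar\alpha)=:\hat\vt$, then $\gamma_1^{\bar\alpha}$ and $\gamma_2^{\bar\alpha}$ both pass through the point $(\hat\vt,\hat\vt+\pi/2)$. On the open interval where both are graphs $\vp=\vp_{\bar\alpha}(\vt)$ over $\vt$, they solve the same first-order ODE \eqref{eq:ODE_x_vp} with the same value at $\vt=\hat\vt$, so by uniqueness for that ODE (valid as long as $\sin(\vp_{\bar\alpha}-\vt)\neq0$, which holds on $(\vt^-,\vt^+)$ since $\vt$ is strictly monotone there) they coincide on all of $(\vt^-,\vt^+)$. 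Taking limits at the endpoints, this common trajectory is asymptotic to $(\vt^-,\vt^-+\pi)$ in the past and to $(\vt^+,\vt^+)$ in the future, i.e.\ it is the sought saddle--saddle connection. The ``if'' direction is the converse I sketched above.

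Third, uniqueness of $\bar\alpha$ follows from the strict monotonicity in $\alpha$ already proved: Lemma~\ref{lem:unst_man_control} gives that $\hat\vt^-(\alpha)$ is strictly increasing in $\alpha$, while Lemma~\ref{lem:st_man_control} gives that $\hat\vt^+(\alpha)$ is strictly decreasing in $\alpha$. Hence $\alpha\mapsto\hat\vt^-(\alpha)-\hat\vt^+(\alpha)$ is strictly increasing, so it can vanish for at most one value of $\alpha$; this is exactly the asserted uniqueness.

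The only genuinely delicate point is making sure that the two distinguished trajectories are really \emph{graphs} over $\vt$ on the \emph{whole} of $(\vt^-,\vt^+)$ when $\hat\vt^-=\hat\vt^+$, so that the ODE-uniqueness argument applies globally and not just locally: a priori $\gamma_1^\alpha$ is controlled only on $(\vt^-,\hat\vt^-(\alpha)]$ and $\gamma_2^\alpha$ only on $[\hat\vt^+(\alpha),\vt^+)$. But when $\hat\vt^-=\hat\vt^+=\hat\vt$ these two half-intervals together cover $(\vt^-,\vt^+)$ and overlap exactly at $\hat\vt$, where both curves sit on $\{\vp=\vt+\pi/2\}$; gluing them there produces a single $\cont^1$ graph over $(\vt^-,\vt^+)$, and on it $v$ is strictly increasing from $-\sqrt{U_{\min}}$ to $+\sqrt{U_{\min}}$, so $\sin(\vp-\vt)>0$ throughout and the orbit stays a genuine trajectory of \eqref{eq:dydthetaphi}. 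Everything else is bookkeeping with the monotonicity statements already in hand.
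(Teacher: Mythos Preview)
Your argument is correct and matches the paper's approach: in the paper the proposition is presented without a separate proof, as a direct summary of Lemmata~\ref{lem:unst_man_control} and \ref{lem:st_man_control} (``Summing up, we have proved the following''), and you have simply written out those implicit details. Two minor remarks: you swapped the labels ``if'' and ``only if'' in your second paragraph; and your gluing via the scalar ODE \eqref{eq:ODE_x_vp} is slightly more work than needed, since uniqueness for the planar system \eqref{eq:dydthetaphi} already forces $\gamma_1^{\bar\alpha}=\gamma_2^{\bar\alpha}$ once they share the point $(\hat\vt,\hat\vt+\pi/2)$.
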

The function $\bar\alpha$ can be extended to all the possible triplets $U \in \Uh_{
\vt_1 \vt_2}$ and $\vt^- \;,\vt^+ \in \Theta_{\vt_1 \vt_2}$  as follows:
\begin{definition}\label{defi:baralfa}
For any triplet $U \in \Uh$, $\vt^- < \vt^+ \in \Theta$,
we define the function
\[
\bar\alpha(\vt^-,\vt^+,U)=\inf\left\{\alpha\in(0,2):\,\hat\vt^-(\alpha)>
\hat\vt^+(\alpha)\right\}
\]
If $\vt^- >  \vt^+$ we define $\bar\alpha(\vt^-,\vt^+,U)=\bar\alpha(\vt^+,\vt^-,U)$.
\end{definition}
In this way, the previous proposition proves the first point of Theorem \ref{theo:main}.

As a final remark, let us notice that the apsidal angles defined above, and
the corresponding stable/unstable trajectories, act as a ``barrier'' for any
heteroclinic traveling in the strip $\vt^-\leq\vt\leq\vt^+$ and
corresponding to a (not necessarily minimal) parabolic trajectory. Such kind of arguments
will turn out to be useful in the proof of Theorems \ref{theo:bolza} and \ref{theo:gordon}.
\begin{proposition}\label{propo:no_para_in_strip}
Let $U \in \Uh$, $\vt^-, \vt^+ \in \Theta$, and
let us assume that
\[
\alpha>\bar\alpha(\vt^-,\vt^+,U).
\]
Then $(U,\alpha)$ does not admit any (not necessarily minimal) parabolic trajectory
completely contained in the sector $[\vt^-,\vt^+]$.
\end{proposition}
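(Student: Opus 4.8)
The plan is to argue by contradiction: a parabolic trajectory lying in the sector $[\vt^-,\vt^+]$ produces a heteroclinic of \eqref{eq:dydthetaphi} trapped in the strip $\vt^-\le\vt\le\vt^+$, whose \emph{apsidal angle} I will squeeze between $\hat\vt^-(\alpha)$ and $\hat\vt^+(\alpha)$, contradicting $\hat\vt^-(\alpha)>\hat\vt^+(\alpha)$ (which holds for $\alpha>\bar\alpha$ by Definition~\ref{defi:baralfa} and the strict monotonicity in $\alpha$ of $\hat\vt^\pm$ from Lemmata~\ref{lem:unst_man_control} and \ref{lem:st_man_control}).

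First I would set up the reduction. Assume $(U,\alpha)$ has a parabolic trajectory whose angular coordinate stays in $[\vt^-,\vt^+]$. By Theorem~\ref{thm:devaney} and \eqref{eq:cond_cos} it corresponds to a non-stationary solution $(\vt,\vp)$ of \eqref{eq:dydthetaphi} with $\cos(\vp-\vt)\to-1$ as $\tau\to-\infty$ and $\cos(\vp-\vt)\to+1$ as $\tau\to+\infty$; being non-stationary, it is a heteroclinic joining two equilibria. Along it \eqref{eq:v'} gives $v'=(2-\alpha)[U(\vt)]^{3/2}\sin^2(\vp-\vt)\ge0$, and $\sin(\vp-\vt)$ cannot vanish identically on an interval (otherwise $(\vt,\vp)$ would be constant), so $v$ is strictly increasing; since $v$ passes through negative and positive values, there is a unique $\tau_0$ with $v(\tau_0)=0$, i.e.\ $\cos(\vp(\tau_0)-\vt(\tau_0))=0$. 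After possibly replacing the orbit by its time-reversal (again parabolic and in the same sector, via the symmetry $(\vt(\tau),\vp(\tau))\mapsto(\vt(-\tau),\vp(-\tau)+\pi)$) and shifting $\vp$ by a multiple of $2\pi$, I may assume $\vp(\tau_0)-\vt(\tau_0)=\pi/2$. Then $\cos(\vp-\vt)$ has constant sign on each of $(-\infty,\tau_0)$ and $(\tau_0,+\infty)$, and continuity of $\vp-\vt$ forces $\vp-\vt\in[\pi/2,3\pi/2]$ for $\tau\le\tau_0$ and $\vp-\vt\in[-\pi/2,\pi/2]$ for $\tau\ge\tau_0$; the asymptotic equilibria are therefore $(\vt_a,\vt_a+\pi)$ and $(\vt_b,\vt_b)$ with $U'(\vt_a)=U'(\vt_b)=0$ and $\vt_a,\vt_b\in[\vt^-,\vt^+]$.

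Next I would run the barrier argument twice. Set $\vt_0:=\vt(\tau_0)$; note $\vt_0\in(\vt^-,\vt^+)$, because if $\vt_0=\vt^\pm$ then $\tau_0$ would be a global extremum of $\vt$ along the orbit, forcing $\vt'(\tau_0)=0$, whereas $\vt'(\tau_0)=2U(\vt^\pm)\sin(\pi/2)=2U_{\min}\neq0$. The half-orbit $\gamma^-:=\{(\vt(\tau),\vp(\tau)):\tau\le\tau_0\}$ lies in $\Sigma$, emanates from $(\vt_a,\vt_a+\pi)$, and leaves $\Sigma$ at $(\vt_0,\vt_0+\pi/2)$; since this exit point lies on the line $\vp=\vt+\pi/2$ with $\vt_0\neq\vt^\pm$, Corollary~\ref{coro:barriere} (applied with the central configuration $\vt_a$) forces it onto the segment $\{\hat\vt^-(\alpha)\le\vt\le\vt^+,\ \vp=\vt+\pi/2\}$, whence $\vt_0\ge\hat\vt^-(\alpha)$. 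Applying the symmetry $(\vt(\tau),\vp(\tau))\mapsto(\vt(-\tau),\vp(-\tau)+\pi)$ to the other half-orbit $\{(\vt(\tau),\vp(\tau)):\tau\ge\tau_0\}$ (which lies in $\Sigma'$) produces a solution $\gamma^+$ that emanates from $(\vt_b,\vt_b+\pi)$, lies in $\Sigma$, and leaves $\Sigma$ at $(\vt_0,\vt_0+3\pi/2)$; since this point lies on $\vp=\vt+3\pi/2$ with $\vt_0\neq\vt^\pm$, Corollary~\ref{coro:barriere} (now with central configuration $\vt_b$) forces it onto the segment $\{\vt^-\le\vt\le\hat\vt^+(\alpha),\ \vp=\vt+3\pi/2\}$, whence $\vt_0\le\hat\vt^+(\alpha)$. (If either of these segments were empty, the corollary would be contradicted outright.)

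Combining, $\hat\vt^-(\alpha)\le\vt_0\le\hat\vt^+(\alpha)$, contradicting $\hat\vt^-(\alpha)>\hat\vt^+(\alpha)$; hence no parabolic trajectory lies in $[\vt^-,\vt^+]$. The delicate point is the reduction in the second paragraph: because $\vt_a$ and $\vt_b$ need not be global minima of $U$, the angle $\vt$ need not be monotone along the heteroclinic (unlike in Corollary~\ref{coro:as}), so one cannot integrate along $\vt$; it is instead the monotonicity of $v$ that confines the two half-orbits to the diagonal strips of Corollary~\ref{coro:barriere}, and one must also pin down the orientation/normalization correctly and exclude that the orbit exits $\Sigma$ through the vertical sides $\{\vt=\vt^\pm\}$ — these being handled, respectively, by the choice of $\tau_0$ and by the sign of $\vt'(\tau_0)$.
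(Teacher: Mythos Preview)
Your proof is correct and follows essentially the same strategy as the paper's: reduce the hypothetical parabolic trajectory to a heteroclinic of \eqref{eq:dydthetaphi} in the strip, locate the unique crossing of $\vp=\vt+\pi/2$, and use the barrier Corollary~\ref{coro:barriere} to trap the apsidal angle $\vt_0$ in $[\hat\vt^-(\alpha),\hat\vt^+(\alpha)]$, contradicting $\hat\vt^-(\alpha)>\hat\vt^+(\alpha)$. The only cosmetic difference is that the paper applies both halves of Corollary~\ref{coro:barriere} (exit from $\Sigma$ and entry into $\Sigma'$) directly, whereas you apply the first half twice, once to $\gamma^-$ and once to the time-reversed, $\pi$-shifted image of the forward half-orbit; your route has the minor advantage of being more explicit about the normalization and the exclusion of the vertical sides $\vt=\vt^\pm$.
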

\begin{proof}
By Theorem \ref{thm:devaney} (and in particular condition \eqref{eq:cond_cos})
such a parabolic trajectory $x=x(t)$ would correspond to an heteroclinic connection
for system \eqref{eq:dydthetaphi}, joining an equilibrium (say)
$(\vt^*,\vt^*+\pi)$ to another one $(\vt^{**}+2h\pi,\vt^{**}+2h\pi)$,
with $h$ integer. We want to prove that such a trajectory, under the above
assumptions, can not be completely contained in the strip $[\vt^-,\vt^+]\times\RR$.

To start with, we observe that $h$ must be equal to either $0$ or $1$. Indeed, the
function $v(\tau)$ is non-decreasing along any trajectory, and $v=0$ whenever
$\vp=\vt+\pi/2 + k\pi$, $k$ integer. W.l.o.g we can assume $h=0$, so that the
trajectory we are considering joins $(\vt^*,\vt^*+\pi)$ to $(\vt^{**},\vt^{**})$. Let
us assume by contradiction that it is completely contained in the strip
$[\vt^-,\vt^+]\times\RR$; but then, using the notations of Corollary
\ref{coro:barriere}, it must both exit $\Sigma$ and enter $\Sigma'$, across
a single point belonging to the line $\vp=\vt+\pi/2$ and the strip.
This immediately provides a contradiction with the selfsame corollary, since
\[
\alpha > \bar\alpha
\qquad\Longrightarrow\qquad\hat\vt^-(\bar\alpha)>\hat\vt^+(\bar\alpha).\qedhere
\]
\end{proof}
\section{Minimality Properties near Equilibria}\label{sec:min_prop}

The purpose of this section is to develop a first investigation about the
minimality properties of zero energy solutions of \eqref{eq:dynsys}
with respect to the Maupertuis' functional
\[
J(x) = J([a,b];x):=
\int_{a}^{b} \frac12 |\dot x(t)|^2  \, \dt \cdot  \int_{a}^{b} V(x(t)) \, \dt,
\]
where $V=(U,\alpha) \in \Vh$. Indeed let us recall that
\begin{multline*}
\min \left\{ \action([a',b'];y) : \, a'<b', y \in H^1(a',b') \, + \, \text{further conditions} \right\}
= \\
\min \left\{ \sqrt{2J([a,b];x)} : x \in H^1(a,b) \, + \, \text{same conditions} \right\}
\end{multline*}
\emph{for every (fixed) $a<b$}, indeed $J$ is invariant under reparameterizations (see \cite{AC-Z}).
As a consequence, every parabolic trajectory is a critical point of $J$,
at least when restricted on suitably small bounded intervals.

In particular, we want to evaluate the second differential of $J$ along zero-energy critical points.
In order to do this, we first perform a change of time-scale essentially equivalent to
the Devaney's one we exploited in Section \ref{sec:devaney}. In polar coordinates $J$ reads as
\[
J(r,\vt) =
   \int_{a}^{b} \frac12 \left[ \dot r^2(t) + r^2(t)\dot\vt^2(t) \right]\dt\cdot
   \int_{a}^{b}  \frac{U(\vt(t))}{r^{\alpha}(t)}\dt;
\]
introducing the time-variable
\[
\tau = \tau(t) = \int_a^t r^{-(2+\al)/2}(\xi)\,\d\xi
\]
we obtain (noting with a prime `` $'$ '' the derivative with respect to $\tau$ )
\[
J(r,\vt) =
   \int_{a}^{\tau^*} \frac12 \left[\left(r^{-(2+\alpha)/4}r'\right)^2
                                 + \left(r^{(2-\alpha)/4}\vt' \right)^2 \right]\dtau
   \cdot\int_{a}^{\tau^*} r^{(2-\alpha)/2}U(\vt)\dtau
\]
where $r$ and $\vt$ depends now on $\tau$, and
\[
\tau^* = \int_a^{b} r^{-\frac{2+\al}{2}}dt.
\]
We introduce the change of variables
\begin{equation*}
\rho = r^{\frac{2-\al}{4}}, \quad \rho' = \frac{2-\al}{4}
r^{-\frac{2+\al}{4}} r'
\end{equation*}
in order to obtain the Maupertuis' functional depending on $(\rho,\vt)$, i.e.
\begin{equation*}
J (\rho,\vt) = F(\rho,\vt)\,G(\rho,\vt),
\end{equation*}
where
\[
F(\rho,\vt) = \int_0^{\tau^*} \frac{8}{(2-\alpha)^2} (\rho')^2 + \frac{1}{2} (\rho\vt')^2 \dtau, \quad
G(\rho,\vt) = \int_0^{\tau^*} \rho^2 U(\vt)\dtau.
\]
The energy relation \eqref{eq:dynsys2} written in terms of $\tau$, $\rho$
and $\vt$ yields $F(\rho,\vt) = G(\rho,\vt)$.
Let now $(\rho,\vt)$ be a critical point of $J$, then
\[
\d J(\rho,\vt) = \d F(\rho,\vt)\,G(\rho,\vt) + F(\rho,\vt)\,\d G(\rho,\vt) = 0;
\]
from the energy relation we then deduce that if $(\rho, \vt)$ is a zero-energy critical point for $J$ then $\d F(\rho,\vt) = -\d G(\rho,\vt)$; as a consequence we can write
\[
\d^2J(\rho,\vt) = G(\rho,\vt)\, \left[\d^2F(\rho,\vt) + \d^2 G(\rho,\vt)\right] -2 \left[ \d G(\rho,\vt)\right]^2.
\]
More explicitly, given  a compactly supported variation $(\lambda,\xi)$ and
a zero-energy critical point for $J$, $(\rho, \vt)$, we have that
\[
\begin{split}
\d^2F(\rho,\vt)[(\lambda,\xi),(\lambda,\xi)] &=
\int_0^{\tau^*} \frac{16}{(2-\alpha)^2} (\lambda')^2 + (\rho\xi')^2
                 + 4\vt' \xi' \rho \lambda +(\vt')^2\lambda^2\dtau,  \\
\d  G(\rho,\vt)(\lambda,\xi) &=  \int_0^{\tau^*} 2\rho\lambda U(\vt) + \rho^2 U'(\vt)\xi \dtau, \\
\d^2G(\rho,\vt)[(\lambda,\xi),(\lambda,\xi)] &= \int_0^{\tau^*} 2 \lambda^2U(\vt) +
                                                4\lambda \rho U'(\vt)\xi + \rho^2 U''(\vt)\xi^2 \dtau.
\end{split}
\]
In the rest of the paper we will prove that trajectories asymptotic to minimal
central configurations are indeed, at least locally, minimizers for $J$.
The main result of this section concerns the non-minimality of trajectories
which are asymptotic to ``sufficiently'' non-minimal central configurations.
\begin{proposition}\label{propo:maximal_not_minimal}
Let $\bar\vt$ be such that $U'(\bar\vt)=0$, and let $(\rho,\vt)$ be any critical
point of $J$, defined for $\tau\in[0,+\infty)$, such that $\vt(\tau)\to\bar\vt$
as $\tau\to+\infty$. Finally let $\alpha$ be such that
\begin{equation}\label{eq:too_strict}
U''\left(\bar\vt\right)<-\frac{(2-\alpha)^2}{8} U\left(\bar\vt\right).
\end{equation}
Then, for $a'<b'$ sufficiently large, $(\rho,\vt)$ restricted to $(a',b')$ is neither
a minimum for $\action$, nor for $J$.
\end{proposition}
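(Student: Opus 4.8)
The plan is to reduce, via the Maupertuis correspondence recalled at the beginning of this section, to showing that $(\rho,\vt)$ restricted to a suitable interval $(a',b')$, with $a'$ large and $b'-a'$ suitably large, is not a local minimizer of $J$; and to prove the latter by exhibiting a compactly supported variation on which $\d^2J<0$. Since $G(\rho,\vt)=\int_0^{\tau^*}\rho^2 U(\vt)\dtau>0$ and $-2[\d G]^2\le0$, from the formula
\[
\d^2J(\rho,\vt)[(\lambda,\xi)]^2=G(\rho,\vt)\big[\d^2F+\d^2G\big][(\lambda,\xi)]^2-2\big[\d G(\lambda,\xi)\big]^2
\]
it suffices to produce $(\lambda,\xi)$ with $\big[\d^2F+\d^2G\big][(\lambda,\xi)]^2<0$. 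I would look for a purely angular variation $(\lambda,\xi)=(0,\xi)$: then the cross terms and the $\lambda$-terms in $\d^2F,\d^2G$ disappear, and
\[
\big[\d^2F+\d^2G\big][(0,\xi)]^2=\int_{a'}^{b'}(\rho\xi')^2+\rho^2 U''(\vt)\,\xi^2\dtau .
\]

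Next I would record the asymptotics of the trajectory as $\tau\to+\infty$. Since $(\rho,\vt)$ is a zero-energy critical point with $\vt(\tau)\to\bar\vt$, $U'(\bar\vt)=0$, Theorem \ref{thm:devaney} implies that the associated $(\vt,\vp)$ converges to an equilibrium $(\bar\vt,\bar\vt+h\pi)$; hence $\cos(\vp-\vt)\to\pm1$, $\sin(\vp-\vt)\to0$, $U(\vt)\to U(\bar\vt)=:U_0>0$ and $U''(\vt)\to U''(\bar\vt)$. Moreover, from $\dot r=r^{-\al/2}\sqrt{2U(\vt)}\cos(\vp-\vt)$ and $\rho'/\rho=\tfrac{2-\al}{4}\dot r\,r^{\al/2}$ (which follows from $\rho=r^{(2-\al)/4}$ and $\d\tau/\dt=r^{-(2+\al)/2}$), one gets
\[
\frac{\rho'(\tau)}{\rho(\tau)}=\frac{2-\al}{4}\sqrt{2U(\vt(\tau))}\,\cos(\vp(\tau)-\vt(\tau))\longrightarrow\pm\beta_0,\qquad \beta_0:=\frac{2-\al}{4}\sqrt{2U_0},
\]
so that $\beta_0^2=\tfrac{(2-\al)^2}{8}U_0$; the key observation is that hypothesis \eqref{eq:too_strict} is exactly the inequality $\beta_0^2+U''(\bar\vt)<0$.

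Now I would fix $L>0$ and $w\in H^1(0,L)$ with $w(0)=w(L)=0$, $w\not\equiv0$, and for large $a'$ set $b'=a'+L$ and $\xi(\tau):=\rho(\tau)^{-1}w(\tau-a')$, so that $\rho\xi=w$ and $\rho\xi'=w'-(\rho'/\rho)\,w$. After the substitution $s=\tau-a'$,
\[
\int_{a'}^{b'}(\rho\xi')^2+\rho^2 U''(\vt)\,\xi^2\dtau=\int_0^L\Big(w'(s)-\tfrac{\rho'(a'+s)}{\rho(a'+s)}\,w(s)\Big)^2+U''(\vt(a'+s))\,w(s)^2\,\d s ,
\]
and, letting $a'\to+\infty$ with $L$ fixed, the uniform convergence on $[a',b']$ of $\rho'/\rho$ to $\pm\beta_0$ and of $U''(\vt)$ to $U''(\bar\vt)$ shows that this integral tends to $\int_0^L (w')^2+\big(\beta_0^2+U''(\bar\vt)\big)w^2\,\d s=:I(w)$, the cross term $\mp2\beta_0\int_0^L w w'=\mp\beta_0[w^2]_0^L$ vanishing by the boundary conditions on $w$. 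Choosing $w(s)=\sin(\pi s/L)$ gives $I(w)=\tfrac{\pi^2}{2L}+\big(\beta_0^2+U''(\bar\vt)\big)\tfrac{L}{2}$, which by \eqref{eq:too_strict} is strictly negative once $L$ is large. Fixing such an $L$, and then $a'$ large enough that $\big[\d^2F+\d^2G\big][(0,\xi)]^2<\tfrac12 I(w)<0$, we obtain $\d^2J(\rho,\vt)[(0,\xi)]^2<0$; hence $(\rho,\vt)|_{(a',b')}$ is not a local minimizer of $J$, and, by the Maupertuis equivalence, not a minimizer of $\action$ either.

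The one genuinely delicate step is the limit in the last paragraph: one must verify that the exponentially growing or decaying weight $\rho^2$ is \emph{exactly} absorbed by the substitution $w=\rho\xi$, so that, along variations concentrated near infinity, the asymptotic form of $\d^2F+\d^2G$ collapses to the constant-coefficient one-dimensional quadratic form $I$, whose sign on long intervals is governed precisely by $\beta_0^2+U''(\bar\vt)=\tfrac{(2-\al)^2}{8}U(\bar\vt)+U''(\bar\vt)$ — the quantity in \eqref{eq:too_strict}. Simply discarding the nonnegative contribution $-2[\d G]^2$, rather than estimating it, is what lets us avoid introducing a radial component $\lambda$ and keeps the computation short.
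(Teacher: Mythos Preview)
Your argument is correct, and the key structural move --- taking a purely angular variation $(0,\xi)$ so that only the quadratic form $\int \rho^2\big[(\xi')^2+U''(\vt)\xi^2\big]\dtau$ must be made negative --- coincides with the paper's. The identification of the threshold condition $\beta_0^2+U''(\bar\vt)<0$, with $\beta_0^2=\tfrac{(2-\alpha)^2}{8}U(\bar\vt)$, is also the same as in the paper (where $\beta_0$ is called $\gamma$).

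Where you genuinely diverge is in how you render that quadratic form negative. The paper proceeds by Sturm comparison: it shows (via a Riccati equation for $\rho'/\rho$ derived from the Euler--Lagrange and energy relations) that $\rho(\tau)\leq k e^{(\gamma+\ep)\tau}$, compares the weighted equation $(\rho^2\xi')'=(\mu+2\ep)\rho^2\xi$ with the explicit constant-coefficient model $\xi''+2(\gamma+\ep)\xi'-(\mu+2\ep)\xi=0$, observes the latter is oscillatory precisely when $\mu<-\gamma^2$, and then takes $a',b'$ to be two large consecutive zeroes. You instead perform the Liouville substitution $w=\rho\xi$ directly, which absorbs the weight and collapses the second variation to the asymptotically constant-coefficient form $\int(w')^2+(\beta_0^2+U''(\bar\vt))w^2$; an explicit sine then makes this negative on a long enough interval. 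Your route is arguably more elementary --- no Sturm theory, no exponential estimate on $\rho$ --- and the derivation of $\rho'/\rho\to\pm\beta_0$ via the Devaney coordinates is a clean shortcut compared with the paper's Riccati argument. The paper's route, in return, yields a slightly stronger byproduct (infinitely many zeroes, hence infinitely many intervals of negativity), though this is not needed for the proposition.

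One remark on your last line: invoking ``the Maupertuis equivalence'' to pass from $J$ to $\action$ is unnecessary here. Since $\action=F+G$, the very inequality you established, $\big[\d^2F+\d^2G\big][(0,\xi)]^2<0$, is literally $\d^2\action[(0,\xi)]^2<0$; the same variation works for both functionals at once. This is what the paper means by saying the computation for $\action$ is ``similar but simpler''.
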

\begin{proof}
We prove the result for the Maupertuis' functional $J$, indeed the computations for
the action are similar but simpler (recall that, under the above notations,
$\action (\rho,\vt) = F(\rho,\vt)+G(\rho,\vt)$). More precisely, we are going to provide a
compactly supported variation $(0,\xi)$ along which $d^2J(\rho,\vt)$ will result
negative. By the above calculations we have
\begin{multline*}
\d^2J(\rho,\vt)[(0,\xi),(0,\xi)] = \\
\int_{a'}^{b'} \rho^2 U(\vt) \, \dtau \cdot
\int_{a'}^{b'} \rho^{2} \left[  (\xi')^2 + U''(\vt) \xi^2 \right] \dtau
-2\left( \int_{a'}^{b'} \rho^2 U'(\vt)\xi \, \dtau,\right)^2\\
\leq C\int_{a'}^{b'} \rho^{2} \left[  (\xi')^2 + (\mu+\ep) \xi^2 \right] \dtau,
\end{multline*}
where $C>0$, $\ep>0$ is small, $a'<b'$ are large and $\mu:=U''(\bar\vt)$.

Now, we claim that the solutions of the linear equation
\begin{equation}\label{eq:yetanotherstupidequation}
(\rho^2 \xi')'=(\mu+2\ep)\rho^2\xi
\end{equation}
have infinitely many zeroes for $\tau$ large; as a consequence, choosing $a',b'$ to be two of such zeroes, testing with $\xi$ and integrating by parts, one would obtain
\[
\int_{a'}^{b'} \rho^{2} \left[  (\xi')^2 + (\mu+\ep) \xi^2 \right] \dtau = -\ep\int_{a'}^{b'} \rho^{2} \xi^2  \dtau<0,
\]
providing the desired result.

In order to establish the oscillatory nature of equation \eqref{eq:yetanotherstupidequation} we will apply Sturm comparison principle.
First of all, by combining the Euler-Lagrange
equation for $\rho$
\[
\frac{16}{(2-\alpha)^2} \rho'' = (\vt')^2 \rho + 2\rho U(\vt),
\]
and the zero-energy relation
\[
\frac{8}{(2-\alpha)^2} (\rho')^2 +\frac12 (\vt')^2 \rho^2 = \rho^2 U(\vt),
\]
we have that the function
\[
p(\tau):=\frac{\rho'(\tau)}{\rho(\tau)}\quad\text{ satisfies }\quad
p'= - 2p^2+\frac{(2-\alpha)^2}{4}U(\vt)
\]
on $[0,+\infty)$. But then, since $\vt(\tau)\to\bar\vt$ as $\tau\to+\infty$,
by elementary comparison we easily obtain
\[
\lim_{\tau\to+\infty} \frac{\rho'(\tau)}{\rho(\tau)} = \sqrt{\frac{(2-\alpha)^2}{8}
U(\bar\vt)}=:\gamma.
\]
We finally infer that, for some constant $k$, and for $\tau$ large, there holds
$\rho(\tau)< k e^{(\gamma+\ep)\tau}$. But then Sturm comparison principle applies
to \eqref{eq:yetanotherstupidequation} and to
\[
(k^2 e^{2(\gamma+\ep)\tau} \xi')'=(\mu+2\ep)k^2 e^{2(\gamma+\ep)\tau}\xi,
\]
yielding that every nodal interval of the second equation contains (at least) one zero
of the first one; to conclude we observe that this last equation writes
\[
\xi''+2(\gamma+\ep)\xi'-(\mu+2\ep)\xi=0,
\]
which is oscillatory if and only if, for some $\ep>0$, there holds
$(\gamma+\ep)^2+(\mu+2\ep)<0$, i.e. if and only if
\[
\mu<-\gamma^2.\qedhere
\]
\end{proof}
\begin{corollary}\label{coro:maximal_not_minimal}
Let $\bar\vt$ be such that $U'(\bar\vt)=0$, and let $x$, defined for $t\in[0,+\infty)$,
be a solution of \eqref{eq:dynsys}, \eqref{eq:dynsys2},
such that $x(t)/|x(t)| \to (\cos \bar\vt, \sin \bar\vt)$ as $t\to+\infty$.
Finally let $\alpha$ satisfy condition \eqref{eq:too_strict}.
Then, $x$ can neither be a free-time Morse minimizer, nor a fixed-time one.
\end{corollary}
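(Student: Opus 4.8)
The plan is to pull the statement back, via the time rescaling and change of variables of Section~\ref{sec:min_prop}, to the non-minimality result for the Maupertuis' functional established in Proposition~\ref{propo:maximal_not_minimal}.

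The first step is to recognize that $x$ produces a critical point of $J$ of exactly the kind treated there. Since $x$ is defined on the whole of $[0,+\infty)$ and $x(t)/|x(t)|\to(\cos\bar\vt,\sin\bar\vt)$, its angular coordinate satisfies $\vt(t)\to\bar\vt$; the monotonicity of $v$ along \eqref{eq:dydthetaphi}, together with \eqref{eq:v'}, then forces $\cos(\vp-\vt)\to\pm1$, and the value $-1$ is excluded by Theorem~\ref{thm:devaney} (it would make $x$ reach the origin in finite time, contradicting that $x$ is defined on all of $[0,+\infty)$). Hence $r(t)\to+\infty$, and, using global existence for \eqref{eq:syscomplete} together with the zero-energy relation, the time change $\tau(t)=\int_0^t r^{-(2+\alpha)/2}$ is seen to be an increasing diffeomorphism of $[0,+\infty)$ onto itself. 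Setting $\rho=r^{(2-\alpha)/4}$, we obtain a critical point $(\rho,\vt)$ of $J$, defined for all $\tau\in[0,+\infty)$, with $\vt(\tau)\to\bar\vt$; and since $\alpha$ satisfies \eqref{eq:too_strict} by hypothesis, Proposition~\ref{propo:maximal_not_minimal} applies.

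It provides, for arbitrarily large $a'<b'$, an $H^1$ variation $(0,\xi)$ with $\xi$ supported in $(a',b')$ (hence $\xi(a')=\xi(b')=0$) along which $\d^2\action(\rho,\vt)[(0,\xi),(0,\xi)]<0$, where, as in the proof of that proposition, $\action=F+G$ denotes the action in the $(\rho,\vt,\tau)$ variables. Thus, for small $s\neq0$, the path $(\rho,\vt+s\xi)$ on $[a',b']$ has the same endpoint values as $(\rho,\vt)$, still has positive radial component, and satisfies $\action([a',b'];\rho,\vt+s\xi)<\action([a',b'];\rho,\vt)$. The crucial observation is that this variation leaves $\rho$---hence $r=\rho^{4/(2-\alpha)}$---untouched, so the competitor $z_s:=(r,\vt+s\xi)$ shares with $x$ the very same time change $\tau\mapsto t$. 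Writing $t_1<t_2$ for the $t$-instants corresponding to $\tau=a',b'$, and recalling that the change of variables of Section~\ref{sec:min_prop}, performed term by term, identifies the Lagrangian action $\action$ in the $t$-variable with $F+G$ in the $\tau$-variable, we obtain
\[
\action([t_1,t_2];z_s)=(F+G)([a',b'];\rho,\vt+s\xi)<(F+G)([a',b'];\rho,\vt)=\action([t_1,t_2];x),
\]
while $z_s\in H^1(t_1,t_2)$ and $z_s(t_i)=x(t_i)$, $i=1,2$. Hence $x$, restricted to $[t_1,t_2]$, is beaten at fixed endpoints and fixed times by $z_s$, so $x$ is not a fixed-time Morse minimizer; and since, by Definition~\ref{defi:Morse_min}, a free-time Morse minimizer is in particular a fixed-time one (take $t_i'=t_i$), $x$ is not a free-time Morse minimizer either.

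The main obstacle I anticipate is the first step: ruling out pathological futures for $x$, so that the $\tau$-image of $[0,+\infty)$ is genuinely a half-line, and checking that the change of variables of Section~\ref{sec:min_prop} really is a bijection between the relevant classes of admissible competitors with prescribed endpoints. Everything afterwards is the bookkeeping of an honest change of variables, made painless by the fact that the variation acts only on the angular component.
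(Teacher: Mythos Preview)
Your proof is correct and follows the same route the paper intends: the corollary is the translation of Proposition~\ref{propo:maximal_not_minimal} back to the original $(x,t)$ variables, and the paper leaves this step implicit (noting in the proof of the proposition that the computation for $\action=F+G$ is ``similar but simpler'' than for $J$). Your added verification that the $\tau$-change of variables carries $[0,+\infty)$ onto a half-line, and that the purely angular variation $(0,\xi)$ leaves the time change intact so that the action inequality transfers directly, is exactly the bookkeeping the paper skips.
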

Let us mention that this result completely agrees with the one proved, in the complementary case of collision trajectories, in \cite{BaSe}; on the other hand, quite surprisingly, it is not clear wether trajectories
corresponding to ``not too-strict'' maxima for $U$ (i.e. maxima such that
$-\gamma^2<U''(\bar\vt)<0$) may be minimizers for $J$.

\section{Constrained Minimizers}\label{sec:sector}

In this section we prove Theorem \ref{theo:main} in the case in which
$\vt^-,\vt^+\in\Theta_{\vt_1 \vt_2}$ are such that
\[
\pi < \vt^+ - \vt^- \leq 2\pi.
\]
By time reversibility, also the case $-2\pi \leq \vt^+ - \vt^- <-\pi$ will follow.
In such situation the results in \cite{BTV} apply almost straightforwardly; we summarize them here, making explicit the minor changes we need in the present situation.

The main idea is that, since parabolic minimizers exist only for special values of
$\alpha$, one first introduces more general objects which, on the contrary, exist for every $\alpha$.
\begin{definition}\label{defi:constr_Morse_min}
We say that $x =(r,\vt)\in H^1_{\mathrm{loc}}(\RR)$ is a \emph{constrained Morse minimizer} if
\begin{itemize}
 \item $\min_t r(t)=1$;
 \item $r(t)\to+\infty$, $\vt(t)\to\vt^\pm$ as $t\to\pm\infty$;
 \item for every $t_1<t_2$, $t_1'<t_2'$, and 
$z\in H^1(t_1',t_2')$, there holds
\begin{multline*}
z(t_i')=x(t_i),\,i=1,2,\, \min_{[t_1',t_2']}|z|=\min_{[t_1,t_2]}r \\
\implies\quad \action([t_1,t_2];x)\leq\action([t_1',t_2'];z).
\end{multline*}
\end{itemize}
We denote with $\morse=\morse(U,\alpha)$ the set of constrained Morse minimizers.
\end{definition}
As for Definition \ref{defi:Morse_min}, also the previous definition makes sense
for any pair of central configurations, not necessarily for minimal ones. From this
point of view, Proposition \ref{propo:maximal_not_minimal} provides a necessary
condition for $\morse$ to be non-empty, in the case of non-minimal central
configurations. In any case, when not explicitly remarked, we will always refer
to constrained minimizers between minimal asymptotic configurations.

The following two lemmas describe the main properties of constrained
minimizers; they are a direct consequence of the theory developed in \cite{BTV},
Sections 5 and 6.
\begin{lemma}\label{lem:descrizMorse}
For every $\alpha\in(0,2)$ the set $\morse$ is not empty. If
$x=(r,\vt) \in\morse$ then (up to a time translation)
there exist $t_* \leq 0 \leq t_{**}$ such that:
\begin{enumerate}
\item $r(t) = 1$ if and only if $t \in [t_{*},t_{**}]$,
$\dot{r}(t)< 0$ (resp. $>0$) if and only if $t <t_*$ (resp. $t > t_{**}$);
\item $x$ satisfies \eqref{eq:dynsys} for every $t \not\in [t_{*},t_{**}]$ and
\eqref{eq:dynsys2} for every $t$;
\item one of the following alternatives hold:
\begin{enumerate}
\item $t_*<t_{**}$, $x$ is $\cont^1$ for every $t$, $\dot r\equiv 0$ in $[t_{*},t_{**}]$;
\item $t_*=t_{**}=0$ and $x$ is $\cont^1$ for every $t$;
\item $t_*=t_{**}=0$ and $\dot x$ has a jump discontinuity at $0$, with
\[
-\dot r(0^-)=\dot r(0^+)>0,\qquad \dot\vt(0^-)=\dot\vt(0^+).
\]
\end{enumerate}
\end{enumerate}
\end{lemma}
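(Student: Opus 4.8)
The statement to be proved is Lemma \ref{lem:descrizMorse}, describing the structure of constrained Morse minimizers. The plan is to adapt, essentially verbatim, the variational machinery of \cite{BTV}, Sections 5 and 6, keeping track of the minor modifications needed here: in \cite{BTV} the asymptotic directions are $\xi^\pm=(\cos\vt_i,\sin\vt_i)$, whereas here we impose the richer homotopy constraint of joining $\vt^-$ to $\vt^+$ in $\Theta$, possibly with several turns around the origin. First I would establish non-emptiness of $\morse$. Fix $\alpha\in(0,2)$. One works on the set of paths $x=(r,\vt)\in H^1_{\mathrm{loc}}(\RR)$ with $\min_t r(t)=1$ and $\vt(t)\to\vt^\pm$, $r(t)\to+\infty$ as $t\to\pm\infty$, and minimizes $\action$ over all finite sub-intervals with the pinching constraint on the minimum of $r$; existence follows from the direct method once one checks coercivity of $\action$ on bounded intervals (using $V>0$ and the growth of $V$ near the origin only through the constraint $r\geq1$) together with the asymptotic control near infinity provided by the fact that $V$ is $(-\alpha)$-homogeneous and vanishes at infinity, so that ``pieces of path going to infinity'' cost a bounded amount of action — this is precisely the parabolic/zero-energy balance that makes $\action$ finite on the half-lines. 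The homotopy constraint is closed under $H^1_{\mathrm{loc}}$-weak convergence away from collisions, and the $\min r=1$ normalization removes the scaling degeneracy, so a minimizer exists.

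\textbf{Structure of the minimizer.} Given $x=(r,\vt)\in\morse$, the key observations are local and follow from first-variation/comparison arguments. Since $x$ is a free-time minimizer on every sub-interval avoiding the constraint, on $\{t:r(t)>1\}$ it solves \eqref{eq:dynsys} and, by the free-time (Maupertuis) nature of the minimization, the energy is zero, giving \eqref{eq:dynsys2} there; by continuity \eqref{eq:dynsys2} extends to all $t$, and \eqref{eq:dynsys} to all $t\notin\{r=1\}$. Next one shows the contact set $\{r=1\}$ is a (possibly degenerate) interval $[t_*,t_{**}]$: if $r$ touched the level $1$, left it, and came back, one could cut out the excursion and strictly lower the action (a standard ``no re-entry'' argument, using that staying on $r=1$ is cheaper than any loop above it with the same endpoints — here one must check the comparison path still respects the homotopy class, which it does since cutting an $r>1$ excursion does not change the winding). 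Monotonicity of $r$ outside $[t_*,t_{**}]$ then follows: on each of the two unbounded components $r$ is a zero-energy solution with $r\to\infty$ and $r>1$, hence by the ODE it has no interior critical point (a critical point of $r$ with $r>1$ together with \eqref{eq:dynsys}, \eqref{eq:dynsys2} would force $r$ to stay bounded or to decrease back to the constraint), so $\dot r<0$ before $t_*$ and $\dot r>0$ after $t_{**}$.

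\textbf{The trichotomy at the contact set.} Finally one analyzes the behavior at $t_*,t_{**}$. If $t_*<t_{**}$ the minimizer spends a whole interval on $r=1$; there $\dot r\equiv0$ and, by the Euler--Lagrange equation in the $\vt$ variable restricted to the circle $r=1$ (a $\cont^2$ second-order ODE), $x$ is $\cont^1$ across $t_*$ and $t_{**}$ — the natural boundary condition forces $\dot r$ to vanish from both sides, giving case (a). If $t_*=t_{**}=0$, the pericenter is attained at a single instant; the tangential component $\dot\vt$ is always continuous (the Lagrangian is $\cont^1$ in $\vt$ and the $\vt$-equation has no constraint), while the radial component may jump. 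Writing the first variation with a one-sided radial perturbation at $t=0$ and using the constraint $r\geq1$ as a unilateral obstacle, the Karush--Kuhn--Tucker condition yields that the jump in $\dot r$ is nonnegative and symmetric: either $\dot r(0^-)=\dot r(0^+)=0$ (case (b), the $\cont^1$ pericenter) or $-\dot r(0^-)=\dot r(0^+)>0$ (case (c), the bounce), while the energy relation \eqref{eq:dynsys2} holding on both sides forces $|\dot r(0^-)|=|\dot r(0^+)|$ because $|\dot\vt|$ and $r$ agree there. I expect the main obstacle to be the careful bookkeeping in the ``no re-entry'' and monotonicity steps in the presence of the homotopy constraint — one must be sure that every competitor path used in a cut-and-paste comparison lies in the prescribed homotopy class and still satisfies $\min|z|=\min r$ — but since excising an $\{r>1\}$ loop or reparametrizing does not alter the winding number, and since all these arguments are already carried out in \cite{BTV} for the unconstrained-direction case, the adaptation is routine and the cited sections supply every analytic estimate needed.
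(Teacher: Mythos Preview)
Your approach matches the paper's exactly: the paper gives no proof of this lemma beyond the sentence ``they are a direct consequence of the theory developed in \cite{BTV}, Sections 5 and 6,'' and your proposal is precisely to invoke that theory, with the minor homotopy-class bookkeeping you identify.

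One caveat on your informal sketch: the ``no re-entry'' step as you phrase it---replacing an $r>1$ excursion between two contact times by an arc on $r=1$ and claiming a strict action decrease---is not obviously valid (there is no a priori reason the circular arc is cheaper) and is not how \cite{BTV} actually argues. There, the fact that the contact set is an interval and that $r$ is monotone on each side follows from the obstacle-problem Euler--Lagrange structure together with the monotonicity of $\dot r$ along zero-energy solutions (equivalently, the monotonicity of Devaney's function $v=\sqrt{U(\vartheta)}\cos(\varphi-\vartheta)$, which changes sign at most once), rather than from a cut-and-paste comparison. Since you correctly flag this step as the delicate one and explicitly defer to \cite{BTV} for the analytic details, this does not constitute a gap in your plan.
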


\begin{definition}
In view of the previous lemma, for any $x=(r,\vt)\in\morse$
we define its \emph{(angular) position and velocity jumps} respectively as
\[
\pjump(x):= |\vt(t_{**}) - \vt(t_{*})|,\qquad
\vjump(x):= |\dot r(t_{**}^+) - \dot r(t_{*}^-)|
\]
(in particular they can not be both different from 0, while they are both 0 if and only if
alternative (b) above holds).
\end{definition}

\begin{lemma}\label{lem:exist_morse}
Let $0<\alpha_1<\alpha_2<1$ and let us assume that there
exists $x_i\in\morse(U,\alpha_i)$, $i=1,2$, such that
\[
\pjump(x_1)>0\quad\text{ and }\quad\vjump(x_2)>0.
\]
Then there exist $\bar\alpha\in(\alpha_1,\alpha_2)$ and
$\bar x\in\morse(U,\bar\alpha)$ such that
\[
\pjump(\bar x)=\vjump(\bar x)=0\quad\text{ and }\bar x\text{ is a corresponding free
Morse minimizer}.
\]
\end{lemma}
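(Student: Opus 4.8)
The plan is to exploit the one-parameter family $\{\morse(U,\alpha)\}_{\alpha\in(\alpha_1,\alpha_2)}$ together with a continuity/compactness argument in $\alpha$ and an intermediate-value argument on the quantities $\pjump$ and $\vjump$. First I would establish the necessary compactness: for $\alpha$ ranging in a compact subinterval of $(0,2)$, any family of constrained Morse minimizers $x_\alpha=(r_\alpha,\vt_\alpha)$, normalized (via time translation) as in Lemma~\ref{lem:descrizMorse} so that $\min r_\alpha=1$ is attained on an interval $[t_*(\alpha),t_{**}(\alpha)]\ni 0$, enjoys uniform estimates. The action level $\action([t_1,t_2];x_\alpha)$ on bounded intervals is controlled by comparison with a fixed competitor, and the zero-energy relation \eqref{eq:dynsys2} together with $\min r_\alpha=1$ gives local $H^1$ bounds; moreover, by Corollary~\ref{coro:as} (or the phase-plane description in Section~\ref{sec:devaney}) the lifted angle $\vt_\alpha$ is monotone and ranges in $[\vt^-,\vt^+]$, hence bounded. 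Therefore, along any sequence $\alpha_n\to\bar\alpha$, the $x_{\alpha_n}$ converge in $C^1_{\mathrm{loc}}$ away from the (single) possible jump point, and the limit is again a constrained Morse minimizer $\bar x\in\morse(U,\bar\alpha)$ — the minimality property passes to the limit because competitors for $\bar x$ can be approximated by competitors for $x_{\alpha_n}$ with the prescribed value of the minimum of $|z|$.

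Next I would define, for each $\alpha\in[\alpha_1,\alpha_2]$, a real-valued ``defect'' that measures which of the three alternatives (a), (b), (c) of Lemma~\ref{lem:descrizMorse}(3) occurs; the natural choice is the signed quantity
\[
\delta(\alpha):=\pjump(x_\alpha)-\vjump(x_\alpha),
\]
with the convention that $\delta$ takes the value $\pjump$ when only a position jump is present and $-\vjump$ when only a velocity jump is present (recall these cannot be simultaneously nonzero, and they both vanish exactly in case (b)). By hypothesis $\delta(\alpha_1)=\pjump(x_1)>0$ and $\delta(\alpha_2)=-\vjump(x_2)<0$. The compactness step above, refined to include convergence of the jump data $t_*(\alpha)$, $t_{**}(\alpha)$ and of $\dot r(t_{**}^+)$, $\dot r(t_*^-)$, yields that $\delta$ is (at least) upper and lower semicontinuous, hence — after possibly selecting a convergent subsequence of minimizers at each $\alpha$ — that there is some $\bar\alpha\in(\alpha_1,\alpha_2)$ and some $\bar x\in\morse(U,\bar\alpha)$ realizing $\delta(\bar\alpha)=0$, i.e. $\pjump(\bar x)=\vjump(\bar x)=0$. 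Concretely: let $\bar\alpha:=\sup\{\alpha:\ \text{some element of }\morse(U,\alpha)\text{ has }\pjump>0\}$; pick $\alpha_n\uparrow\bar\alpha$ with $x_{\alpha_n}$ having $\pjump(x_{\alpha_n})>0$ and, by compactness, a limit $\bar x\in\morse(U,\bar\alpha)$; then $\vjump(\bar x)=0$ (else $\bar\alpha$ would not be a supremum of the indicated set, using a symmetric argument from the right with $\alpha_n\downarrow\bar\alpha$ and $\vjump>0$, together with the fact that the two jumps exclude each other). One then checks $\pjump(\bar x)=0$ as well: if $\pjump(\bar x)>0$, the same limiting procedure applied from slightly larger $\alpha$ would contradict maximality. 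Hence $\bar x$ falls into alternative (b), so it is $C^1$ everywhere, $t_*=t_{**}=0$, $\min r=1$, and it solves \eqref{eq:dynsys}, \eqref{eq:dynsys2} globally.

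Finally I would upgrade $\bar x$ from a constrained Morse minimizer to a genuine \emph{free} Morse minimizer in the sense of Definition~\ref{defi:Morse_min}. Since $\bar x$ is smooth, has $\min r=1$ attained only at $t=0$, is strictly decreasing before and strictly increasing after (Lemma~\ref{lem:descrizMorse}(1)), and satisfies the energy relation, it is a zero-energy solution asymptotic to $\vt^\pm$; the constrained minimality over competitors with the \emph{same} minimal radius, combined with the fact that the constraint $\min|z|\ge 1$ is not active — any competitor joining the prescribed endpoints can be radially rescaled (by the homothety invariance noted after Theorem~\ref{thm:devaney}, together with monotonicity of the action in the scaling parameter at zero energy) so that its minimum radius equals that of $\bar x$ without increasing the action — shows that $\bar x$ minimizes over \emph{all} $H^1$ competitors with the given endpoint values, over \emph{all} time intervals. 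This is exactly the free-time Morse minimality. The main obstacle I anticipate is the compactness/semicontinuity of the jump data $\delta(\alpha)$: one must rule out that, in the limit $\alpha_n\to\bar\alpha$, the interval $[t_*(\alpha_n),t_{**}(\alpha_n)]$ degenerates or escapes while a jump ``leaks away'', and one must ensure the limit minimizer still satisfies the constrained minimality with the correct (limiting) value of $\min|z|$; this is where the uniform $C^1_{\mathrm{loc}}$ estimates away from $t=0$ and the rigidity of Lemma~\ref{lem:descrizMorse} (only one jump point, located at $0$ after normalization) do the essential work, and it is precisely the content imported from Sections~5--6 of \cite{BTV}.
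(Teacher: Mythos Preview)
The paper does not prove this lemma in place; it states both Lemma~\ref{lem:descrizMorse} and the present one as ``a direct consequence of the theory developed in \cite{BTV}, Sections~5 and~6'', so there is no in-text argument to compare against. Your compactness-plus-intermediate-value outline is in the spirit of that reference, and Steps~1--2 are essentially sound; in the planar setting the uniqueness of the constrained minimizer (Proposition~\ref{propo:classification_constrained}, whose proof uses only the Devaney analysis of Section~\ref{sec:devaney} and is logically independent of the present lemma) even dissolves the multivaluedness issue you flag for $\delta(\alpha)$.

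There is, however, a genuine gap in Step~3, the upgrade from constrained to free-time minimality. The homothety you propose does not work: the rescaling $z\mapsto\lambda z$ (with the companion time change) sends the endpoints $\bar x(t_i)$ to $\lambda\bar x(t_i)\neq\bar x(t_i)$, so the rescaled path is no longer admissible in the fixed-ends comparison you need. The homothety recorded after Theorem~\ref{thm:devaney} concerns the one-parameter family of \emph{solutions}, not a device for normalizing arbitrary competitors while keeping their endpoints fixed. Relatedly, the constraint in Definition~\ref{defi:constr_Morse_min} is the \emph{equality} $\min|z|=\min_{[t_1,t_2]}r$, not an obstacle $\min|z|\ge1$, so the phrase ``the constraint is not active'' is already off target. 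The mechanism that actually closes this step ties the jump quantities to the dependence of the constrained value on the constraint level: writing $c(\eps)$ for the free-time infimum over paths with the prescribed endpoints and $\min|z|=\eps$, one shows that $\vjump>0$ (respectively $\pjump>0$) for the $\eps$-constrained minimizer forces $c$ to be increasing (respectively decreasing) in $\eps$ there, so that the simultaneous vanishing of both jumps at $\eps_0=\min_{[t_1,t_2]}r$ identifies $\eps_0$ as the minimum of $c$, whence $\bar x|_{[t_1,t_2]}$ beats every competitor regardless of its minimum radius. Your sketch needs this monotonicity ingredient (or an equivalent one) in place of the scaling shortcut.
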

In the planar case, the general theory we have recalled above can be complemented
using the results about the Devaney's system that we obtained in Section \ref{sec:devaney}.
\begin{remark}\label{rem:anotherfuckingkeyremark}
Let $x=(r,\vt)\in\morse$ and $t_{*} \leq 0 \leq t_{**}$ be as in Lemma \ref{lem:descrizMorse}.
Via the variable and time changes introduced in Section \ref{sec:devaney},
we can define $\tau_* \leq 0 \leq \tau_{**}$ in order to obtain
that $x|_{\{t<t_{*}\}}$ corresponds, in the phase plane of system \eqref{eq:dydthetaphi},
to a part of the unstable trajectory emanating from $(\vt^-,\vt^-+\pi)$,
with $\vt$ increasing (and $\tau <\tau_*$); moreover $\vt^-<\vt<\hat\vt^-(\alpha)$
along the trajectory (both this facts descend from the fact that, for $t<t_{*}$,
$\dot r$ is negative, and thus also $r'$ is for $\tau <\tau_*$).
Analogously, $x|_{\{t>t_{**}\}}$ corresponds to a part of the stable trajectory
entering in $(\vt^+,\vt^+)$ (with $\tau>\tau_{**}$), and $\hat\vt^+(\alpha)<\vt<\vt^+$.

Finally, for $\tau\in(\tau_*,\tau_{**})$ (whenever such interval is non empty),
$(\vt,\vp)$ lies in a 1-to-1 way on the line of equation $\vp=\vt+\pi/2$. In particular, $(\vt,\vp)$
is completely contained in the strip
\[
\left\{(\vt,\vp):\, \vt^-<\vt<\vt^+,\,\vt<\vp<\vt+\pi\right\}.
\]
\end{remark}
Taking into account Lemmata \ref{lem:unst_man_control} and \ref{lem:st_man_control} it
is possible to give a full characterization of constrained Morse minimizers in terms
of the functions $\hat\vt^\pm(\alpha)$ there defined.
\begin{proposition}\label{propo:classification_constrained}
Let $U\in\Uh$, $\alpha\in(0,2)$, $\vt^\pm$ as above. Then the corresponding
constrained Morse minimizer $x_\alpha$ is unique (up to time translations) and (see
also Figure \ref{fig:pvjump_devaney})
\begin{itemize}
\item $\pjump(x_\alpha)>0$ if and only if $\hat\vt^-(\alpha)<\hat\vt^+(\alpha)$;
\item $\vjump(x_\alpha)>0$ if and only if $\hat\vt^-(\alpha)>\hat\vt^+(\alpha)$;
\item $\pjump(x_\alpha)=\vjump(x_\alpha)=0$ if and only if
$\hat\vt^-(\alpha)=\hat\vt^+(\alpha)$.
\end{itemize}
\end{proposition}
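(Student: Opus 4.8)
The plan is to combine the ODE description of constrained Morse minimizers from Remark~\ref{rem:anotherfuckingkeyremark} with the monotonicity properties of the apsidal-angle functions $\hat\vt^\pm(\alpha)$ established in Lemmata~\ref{lem:unst_man_control} and~\ref{lem:st_man_control}. The key observation is that, according to Remark~\ref{rem:anotherfuckingkeyremark}, a constrained Morse minimizer $x_\alpha=(r,\vt)$ is built by gluing three pieces: for $\tau<\tau_*$ it follows the unstable trajectory emanating from $(\vt^-,\vt^-+\pi)$ up to the point where it meets the line $\vp=\vt+\pi/2$, which by definition happens at $\vt=\hat\vt^-(\alpha)$; for $\tau>\tau_{**}$ it follows (backwards) the stable trajectory entering $(\vt^+,\vt^+)$, which meets the same line at $\vt=\hat\vt^+(\alpha)$; and for $\tau\in(\tau_*,\tau_{**})$ the pair $(\vt,\vp)$ moves along the line $\vp=\vt+\pi/2$ itself, with $\vt$ increasing if and only if $\hat\vt^-(\alpha)<\hat\vt^+(\alpha)$. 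Thus the sign of $\hat\vt^+(\alpha)-\hat\vt^-(\alpha)$ controls whether the minimizer, at its pericenter, has a genuine angular interval to traverse (position jump, alternative (a) of Lemma~\ref{lem:descrizMorse}) or has to ``back up'' in the radial variable, producing a velocity jump (alternative (c)), or closes up smoothly (alternative (b)).

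First I would establish uniqueness. Given $U$, $\alpha$ and $\vt^\pm$, the unstable trajectory leaving $(\vt^-,\vt^-+\pi)$ with $\vt$ increasing is unique up to time translation (Lemma~\ref{lem:unst_man_control}), and so is the stable one entering $(\vt^+,\vt^+)$ (Lemma~\ref{lem:st_man_control}); by Remark~\ref{rem:anotherfuckingkeyremark} and Lemma~\ref{lem:descrizMorse} any $x\in\morse$ must, for $t<t_*$ and $t>t_{**}$, coincide with (a sub-arc of) these two trajectories, while on $[t_*,t_{**}]$ it is forced to have $r\equiv1$ and to satisfy \eqref{eq:dynsys2}, which pins down $\vt$ on that interval up to translation. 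Matching the three pieces at $r=1$ leaves no freedom, so $x_\alpha$ is unique up to time translation.

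Next, the trichotomy. Consider the value $\vt_*^-:=\hat\vt^-(\alpha)$ reached by the unstable branch as it hits $\vp=\vt+\pi/2$, i.e. as $v=0$ in the notation of Theorem~\ref{thm:devaney}, and similarly $\vt_*^+:=\hat\vt^+(\alpha)$ for the stable branch. Along the unstable branch $r$ (equivalently $\rho$, $z$, $v$) is strictly decreasing in $t$ up to the pericenter, by the computation $r'=2rU(\vt)\cos(\vp-\vt)$ together with $\cos(\vp-\vt)>0$ for $\vp<\vt+\pi/2$; symmetrically $r$ is strictly increasing past the pericenter on the stable branch. If $\hat\vt^-(\alpha)<\hat\vt^+(\alpha)$ the two arcs cannot be joined directly and the minimizer must spend an interval of ``time'' with $\vp=\vt+\pi/2$ exactly, i.e. $v\equiv0$, which by \eqref{eq:v'} and the equations forces $r'\equiv0$ (so $r\equiv1$) and $\vt'=2U(\vt)>0$: this is alternative (a) of Lemma~\ref{lem:descrizMorse}, giving $\pjump(x_\alpha)=\hat\vt^+(\alpha)-\hat\vt^-(\alpha)>0$ and $\vjump(x_\alpha)=0$. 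If $\hat\vt^-(\alpha)>\hat\vt^+(\alpha)$, the unstable arc overshoots in angle, so the minimizer is obtained by letting the unstable arc run up to $r=1$, then reflecting with $-\dot r(0^-)=\dot r(0^+)$ at fixed $\vt$ and $\dot\vt$ (alternative (c)), which is exactly the jump produced by the segment $\vp=\vt+\pi/2$ being traversed in the $\vt$-\emph{decreasing} sense in the auxiliary system — the sign condition $v'=(2-\alpha)U^{3/2}\sin^2(\vp-\vt)\ge0$ still holds, and the reflection is forced by minimality (cf. Lemma~\ref{lem:descrizMorse}(3c)); hence $\vjump(x_\alpha)>0$, $\pjump(x_\alpha)=0$. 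Finally $\hat\vt^-(\alpha)=\hat\vt^+(\alpha)$ is precisely the case where the unstable and stable trajectories meet on the line $\vp=\vt+\pi/2$ at the same $\vt$: then $\tau_*=\tau_{**}$, the minimizer is $\cont^1$ (alternative (b)), both jumps vanish, and this is the heteroclinic of Proposition~\ref{propo:unique_baralfa}.

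The main obstacle is the rigorous verification that the constrained minimizer must in fact be assembled exactly as the gluing of the two extremal trajectories described in Remark~\ref{rem:anotherfuckingkeyremark} — i.e. that no other heteroclinic-like orbit can realize the minimum — and that in the overshoot case the minimizer genuinely takes the velocity-jump form rather than sliding along $\vp=\vt+\pi/2$ in the wrong direction (which would violate $\vt'$ having constant sign, contradicting the monotonicity forced by \eqref{eq:v'} and $r\ge1$ with equality only at the pericenter). Here one leans on Lemma~\ref{lem:descrizMorse}, which already rules out all but the three alternatives, and on Corollary~\ref{coro:barriere}, whose ``barrier'' trajectories confine any minimizing arc to the strip $\vt^-\le\vt\le\vt^+$, $\vt\le\vp\le\vt+\pi$; together these leave only the three configurations above, matched in the stated way to the sign of $\hat\vt^+(\alpha)-\hat\vt^-(\alpha)$. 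Once this is in place, reading off $\pjump$ and $\vjump$ from Lemma~\ref{lem:descrizMorse} completes the proof.
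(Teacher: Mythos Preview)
Your overall approach is the paper's, and your handling of alternatives (a) and (b) of Lemma~\ref{lem:descrizMorse} is correct. The gap is in the velocity-jump case (c), both in the phase-plane picture and in the uniqueness argument.

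The jump is \emph{not} ``the segment $\vp=\vt+\pi/2$ being traversed in the $\vt$-decreasing sense''; when $t_*=t_{**}$ there is no arc on that line at all. Rather, the conditions $-\dot r(0^-)=\dot r(0^+)$ and $\dot\vt(0^-)=\dot\vt(0^+)$ translate via \eqref{eq:syscomplete} into a \emph{vertical} discontinuity of $\vp$ at a single angle $\vt_0$, symmetric about the line $\vp=\vt+\pi/2$: writing the unstable and stable branches as graphs $\vp_-(\vt)$, $\vp_+(\vt)$ over the overlap interval $(\hat\vt^+(\alpha),\hat\vt^-(\alpha))$, the matching reads $\vp_-(\vt_0)+\vp_+(\vt_0)=2\vt_0+\pi$ (this is equation~\eqref{eq:vp_jump}). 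So ``matching at $r=1$'' \emph{does} leave one real degree of freedom, the jump angle $\vt_0$, and your uniqueness claim is unsupported in this case. The paper closes the gap by setting $\psi(\vt):=\vp_+(\vt)+\vp_-(\vt)-2\vt-\pi$, observing that $\psi(\hat\vt^+(\alpha))>0>\psi(\hat\vt^-(\alpha))$, and computing from the ODE~\eqref{eq:ODE_x_vp} that $\psi(\vt_0)=0$ forces $\psi'(\vt_0)=-(2-\alpha)<0$; hence $\psi$ has exactly one zero, and the velocity-jumping constrained minimizer is unique. (A minor aside: on the incoming unstable arc one has $\vp-\vt\in(\pi/2,\pi)$, hence $\cos(\vp-\vt)<0$ and $r'<0$; your sign justification was reversed.)
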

\begin{figure}[!t]
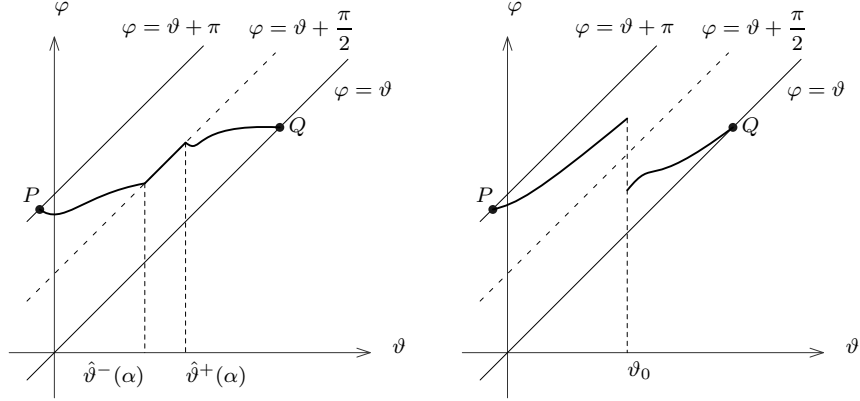

\begin{center}
\begin{tabular}{ccc}
\begin{texdraw}
\drawdim cm  \setunitscale 0.6
\linewd 0.01 \setgray 0.2
\move (0 -1) \arrowheadtype t:V \arrowheadsize l:0.3 w:0.2 \avec (0 7)
\htext (0 7.5) {\footnotesize{$\varphi$}}
\move (-1 0) \arrowheadtype t:V \arrowheadsize l:0.3 w:0.2 \avec (7 0)
\htext (7.5 0) {\footnotesize{$\vt$}}
\linewd 0.02 \setgray 0
\move (-.6 -.6) \lvec (6.5 6.5)
\htext (6.2 5.6) {\footnotesize{$\vp=\vt$}}
\move (-.6 2.9) \lvec (3.3 6.8)
\htext (1.5 7) {\footnotesize{$\vp=\vt+\pi$}}
\lpatt (0.1 0.2) \move (-.6 1.15) \lvec (4.9 6.65)
\htext (4.3 6.7) {\footnotesize{$\displaystyle\vp=\vt+\frac{\pi}{2}$}}
\lpatt()
\linewd 0.03
\move (-.33 3.17) \fcir f:0.1 r:0.1
\textref h:R v:C \htext (-.33 3.5) {\footnotesize{$P$}}
\move (5 5) \fcir f:0.1 r:0.1
\textref h:L v:C \htext (5.2 5) {\footnotesize{$Q$}}
\linewd 0.04
\move (-.33 3.17) \clvec (0.2 2.8)(0.6 3.5)(2 3.75)
\textref h:R v:T \htext (2 -0.2) {\footnotesize{$\hat\vt^-(\alpha)$}}
\linewd 0.015 \lpatt (0.1 0.1) \move (2 3.75) \lvec (2 0) \lpatt ()
\linewd 0.04
\move (2.9 4.65) \clvec (3.3 4.35)(3 5.1)(5 5)
\textref h:L v:T \htext (2.9 -0.2) {\footnotesize{$\hat\vt^+(\alpha)$}}
\linewd 0.015 \lpatt (0.1 0.1) \move (2.9 4.65) \lvec (2.9 0) \lpatt ()
\linewd 0.04
\move (2 3.75) \lvec (2.9 4.65)
\end{texdraw}

& \hspace{0.1cm} &

\begin{texdraw}
\drawdim cm  \setunitscale 0.6
\linewd 0.01 \setgray 0.2
\move (0 -1) \arrowheadtype t:V \arrowheadsize l:0.3 w:0.2 \avec (0 7)
\htext (0 7.5) {\footnotesize{$\varphi$}}
\move (-1 0) \arrowheadtype t:V \arrowheadsize l:0.3 w:0.2 \avec (7 0)
\htext (7.5 0) {\footnotesize{$\vt$}}
\linewd 0.02 \setgray 0
\move (-.6 -.6) \lvec (6.5 6.5)
\htext (6.2 5.6) {\footnotesize{$\vp=\vt$}}
\move (-.6 2.9) \lvec (3.3 6.8)
\htext (1.5 7) {\footnotesize{$\vp=\vt+\pi$}}
\lpatt (0.1 0.2) \move (-.6 1.15) \lvec (4.9 6.65)
\htext (4.3 6.7) {\footnotesize{$\displaystyle\vp=\vt+\frac{\pi}{2}$}}
\lpatt()
\linewd 0.03
\move (-.33 3.17) \fcir f:0.1 r:0.1
\textref h:R v:C \htext (-.33 3.5) {\footnotesize{$P$}}
\move (5 5) \fcir f:0.1 r:0.1
\textref h:L v:C \htext (5.2 5) {\footnotesize{$Q$}}
\linewd 0.04
\move (-.33 3.17) \clvec (0.2 3.3)(0.6 3.5)(2.65 5.2)
\linewd 0.04
\move (2.65 3.6) \clvec (3.3 4.4)(3 3.5)(5 5)
\textref h:L v:T \htext (2.65 -0.2) {\footnotesize{$\vt_0$}}
\linewd 0.015 \lpatt (0.1 0.1) \move (2.65 5.2) \lvec (2.65 0) \lpatt ()
\end{texdraw}
\end{tabular}
\end{center}
\caption{on the left, a position-jumping constrained minimizer between
$P=(\vt^-,\vt^-+\pi)$ and $Q=(\vt^+,\vt^+)$, with $\pjump(x_\alpha)=
\hat\vt^+(\alpha)-\hat\vt^+(\alpha)$. On the right, a velocity-jumping one;
in such a case, the jump discontinuity
is symmetric with respect to $(\vt_0,\vt_0+\pi/2)$ (see equation
\eqref{eq:vp_jump}).\label{fig:pvjump_devaney}}
\end{figure}
\begin{proof}
Let $x_\alpha$ be \emph{any} element of $\morse(U,\alpha)$ and let us denote with
$(\vt,\vp)$ the corresponding arc in the Devaney's plane. Moreover, let
$\tau_*\leq0\leq\tau_{**}$ be the values of $\tau$ corresponding to $t_{*}$,
$t_{**}$, respectively.

We start by assuming that $\vjump(x_\alpha)>0$. This means that $\dot r$ never
vanishes, it is not defined in $t_*=t_{**}=0$, and $-\dot r(0^-)=\dot r(0^+)>0$.
Since $x_\alpha$, and hence $\vt$, are continuous, we obtain that $\vp$ must be
discontinuous. More precisely, letting $\vt_0:=\vt(0)$ and recalling system
\eqref{eq:syscomplete}, we have that
\[
-2 U(\vt_0) \cos (\vp(0^-)-\vt_0)= -r'(0^-) = r'(0^+) =2 U(\vt_0) \cos (\vp(0^+)-\vt_0),
\]
which implies (recall also Remark \ref{rem:anotherfuckingkeyremark})
\begin{equation}\label{eq:vp_jump}
\dfrac{\vp(0^+)+\vp(0^-)}{2}=\vt_0+\frac{\pi}{2}.
\end{equation}
On the other hand, for $\tau$ negative (resp. positive) we have that $\vp$ must be
greater (resp. lower) than $\vt+\pi/2$. Recalling Lemmas \ref{lem:unst_man_control},
\ref{lem:st_man_control}, we deduce that $\hat\vt^-(\alpha)>\hat\vt^+(\alpha)$.
Let us now show that, for every $\alpha$ satisfying this last condition, there exists
exactly one $\vt_0 \in (\hat\vt^+(\alpha),\hat\vt^-(\alpha))$ such that condition
\eqref{eq:vp_jump} holds; this, together with the fact that $r(0)=1$, will imply
uniqueness for the velocity-jumping constrained minimizer.
Thanks to Lemmas \ref{lem:unst_man_control}, \ref{lem:st_man_control} we have that, for $\vt\in
(\hat\vt^+(\alpha),\hat\vt^-(\alpha))$, both the unstable manifold $\vp=\vp_-(\vt)$
and the stable one $\vp=\vp_+(\vt)$ are well defined as functions of $\vt$, and that
they both satisfy equation \eqref{eq:ODE_x_vp}, i.e.
\[
\frac{\d\varphi_\pm}{\d\vartheta} = \frac{\alpha}{2}+
\frac{U'(\vt)}{2U(\vt)}\cotan(\vp_\pm-\vt).
\]
Let us define the (smooth) auxiliary function $\psi(\vt):=\vp_+(\vt)+\vp_-(\vt)-2\vt-\pi$.
Then condition \eqref{eq:vp_jump} is equivalent to $\psi(\vt_0)=0$. We easily obtain
$\pm\psi(\hat\vt^\pm(\alpha))>0$ and
\[
\begin{split}
\frac{\d\psi}{\d\vartheta}
&= \alpha+\frac{U'(\vt)}{2U(\vt)}\left[\cotan(\vp_+-\vt)+\cotan(\vp_--\vt)\right]-2\\
&=\frac{U'(\vt)}{2U(\vt)}\,\frac{\sin(\psi+\pi)}{\sin(\vp_+-\vt)+\sin(\vp_--\vt)}-
(2-\alpha).
\end{split}
\]
We deduce that $\psi(\vt_0)=0$ implies $\d\psi(\vt_0)/\d\vt<0$, so that $\psi$ has
exactly one zero as claimed.

Let us come to the case in which $\vjump(x_\alpha)=0$. Using again Lemmas \ref{lem:unst_man_control}, \ref{lem:st_man_control} we have that both the
unstable trajectory and the stable one meet the line $\vp=\vt+\pi/2$ in exactly
one point. We deduce that, for some $\tau_* \leq \tau_{**}$
\[
\vt(\tau_*)=\hat\vt^-(\alpha),\qquad \vt(\tau_{**})=\hat\vt^+(\alpha).
\]
This, if also $\pjump(x_\alpha)=0$, immediately yields $\hat\vt^-(\alpha)
=\hat\vt^+(\alpha)$. On the other hand, let us assume that $\tau_{*}<\tau_{**}$.
Then, by minimality, the corresponding segment on the line $\vp=\vt+\pi/2$
must be traveled with $\vt$ monotone; since $\vt$ is $\cont^1$ and
$\vt'(\tau^*-)>0$, we deduce that $\vt'>0$ on $[\tau_{*},\tau_{**}]$,
i.e. $\hat\vt^-(\alpha)<\hat\vt^+(\alpha)$. Again, in both cases, the uniqueness of
$x_\alpha$ inside its category is due to the initial conditions $r(\tau_{*})=
r(\tau_{**})=1$.

Now the proof easily follows, indeed, in each of the two triplet of conditions,
at least one instance must occur and each one excludes the others.
\end{proof}
We are ready to prove our main theorem in the present case.
\begin{proof}[Proof of Theorem \ref{theo:main}, case $\pi < \vt^+ - \vt^- \leq 2\pi$]
As already mentioned, the first part of the theorem is a consequence of
Proposition \ref{propo:unique_baralfa} and Definition \ref{defi:baralfa}, while the
second easily follows by comparing Proposition \ref{propo:unique_baralfa} and the third
instance of Proposition \ref{propo:classification_constrained}. To prove
the last part we can use Lemma \ref{lem:exist_morse} in combination with Proposition
\ref{propo:classification_constrained}. In this way, we are left to show the existence
of two values $\alpha_1$, $\alpha_2$ such that the order between $\hat\vt^-(\alpha_i)$
and $\hat\vt^+(\alpha_i)$ is reversed by switching between $i=1$ and $i=2$. To this
aim, reasoning exactly as in the proof of Lemma \ref{lem:stimaalpha}, one can prove
the analogous of estimates \eqref{eq:stimadx}, \eqref{eq:stimasx}, that is
\[
\frac{2}{2-\alpha}\arcsin \sqrt\frac{U_{\min}}{U_{\max}}\leq\hat\vt^-(\alpha)-\vt^-\leq\frac{\pi}{2-\alpha},
\]
\[
\frac{2}{2-\alpha}\arcsin \sqrt\frac{U_{\min}}{U_{\max}}\leq\vt^+-\hat\vt^+(\alpha)\leq\frac{\pi}{2-\alpha}.
\]
Summing up and rearranging we obtain
\[
(\vt^+-\vt^-) - \frac{2\pi}{2-\alpha}\leq \hat\vt^+(\alpha)-\hat\vt^-(\alpha) \leq
(\vt^+-\vt^-) - \frac{4}{2-\alpha}\arcsin \sqrt\frac{U_{\min}}{U_{\max}}.
\]
It is now trivial, taking into account the limitations for $\vt^+-\vt^-$,
to verify that if $\alpha_1$ is small then
$\hat\vt^-(\alpha_1)< \hat\vt^+(\alpha_1)$, while if $\alpha_2$ is near $2$
then the opposite inequality holds.
\end{proof}
We conclude this section with a few words about the case $0<\vt^+-\vt^-\leq\pi$.
\begin{remark}
If $0<\vt^+-\vt^-\leq\pi$ then explicit conditions can be provided to show
that the number $\bar\alpha$, and hence parabolic minimizers, may or may not exist,
depending on the properties of $U$. For instance, if $U$ is a small perturbation
of a constant (that is, $V$ is an anisotropic small perturbation of an isotropic
potential), then $\bar\alpha$ does not exist, recall Figure \ref{fig:keplero}. On the
other hand, it is possible to construct angular potentials $U$ with arbitrarily
small $\vt^+-\vt^-$, such that the corresponding $\bar\alpha$ exists: roughly
speaking, this can be done by choosing $U$ very larger than $U_{\min}$ on a compact
subinterval of $(\vt^-,\vt^+)$, see Lemma 6.11 in \cite{BTV}.
\end{remark}

\section{General Winding Number}\label{sec:srotolo}

In the previous section we ruled out the case in which $\vt^+-\vt^- \in (\pi,2\pi]$.
This section is devoted to reformulate the case
\[
2h\pi < \vt^+ - \vt^- \leq 2(h+1)\pi, \quad h \geq 1
\]
in terms of that previous case, completing the proof of Theorem \ref{theo:main}
(again, the case $-2(h+1)\pi \leq \vt^+ - \vt^- < -2h\pi$ is easily treated
using time reversibility). This can be done using the following conformal change of variables.

\begin{lemma}\label{lem:conf}
Let $x=(r,\vt)$ be defined for $t \in [a,b]$, with $\min_t r >0$ and
$y=(\rho,\vp)$ be defined for $\tau \in [a',b']$, with $\min_{\tau} \rho >0$.
Let us assume that, for some $\beta>0$ there holds
\[
\tau = a' + \int_a^t r^{2(1-\beta)/\beta} \dt, \qquad
r(t) = \rho^\beta(\tau), \qquad
\vt(t) = \beta\vp(\tau),
\]
$b'=a'+\int_a^b r^{2(1-\beta)/\beta} \dt$.
Finally, let $U$ be $2\pi$-periodic and
\[
V(x) = \frac{U(\vt)}{r^\alpha}.
\]
Then
\[
\int_{a}^{b} \frac12 |\dot x|^2 + V(x)\dt = \beta^2 \int_{a'}^{b'} \frac12 |y'|^2 + \tilde V(y) \dtau,
\]
where
\[
\tilde V(y) = \frac{\tilde U(\vp)}{\rho^{\tilde\alpha}} \text{ with }
\tilde U(\vp) = \frac{U(\beta\vp)}{\beta^2}\text{ and }
\tilde \alpha = 2-\beta(2-\alpha).
\]
\end{lemma}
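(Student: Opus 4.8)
The plan is to verify the claimed identity by direct substitution, tracking how each term transforms under the conformal change of variables. First I would compute the differentials of the relations defining the change. From $\tau = a' + \int_a^t r^{2(1-\beta)/\beta}\dt$ we get $\d\tau/\dt = r^{2(1-\beta)/\beta} = \rho^{2(1-\beta)}$ (using $r=\rho^\beta$), hence $\dt = \rho^{2(\beta-1)}\dtau$. Then from $r(t)=\rho(\tau)^\beta$ and $\vt(t)=\beta\vp(\tau)$, differentiating in $t$ and using the chain rule, $\dot r = \beta\rho^{\beta-1}\rho'\cdot(\d\tau/\dt) = \beta\rho^{\beta-1}\rho'\rho^{2(1-\beta)} = \beta\rho^{1-\beta}\rho'$, and similarly $\dot\vt = \beta\vp'\rho^{2(1-\beta)}$.

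Next I would substitute into the kinetic term. We have $|\dot x|^2 = \dot r^2 + r^2\dot\vt^2 = \beta^2\rho^{2(1-\beta)}(\rho')^2 + \rho^{2\beta}\cdot\beta^2(\vp')^2\rho^{4(1-\beta)} = \beta^2\rho^{2(1-\beta)}\left[(\rho')^2 + \rho^2(\vp')^2\right] = \beta^2\rho^{2(1-\beta)}|y'|^2$. Multiplying by $\tfrac12\dt = \tfrac12\rho^{2(\beta-1)}\dtau$, the powers of $\rho$ cancel exactly, leaving $\tfrac12\beta^2|y'|^2\dtau$, which matches the kinetic part of the right-hand side. For the potential term, $V(x)\dt = \dfrac{U(\vt)}{r^\alpha}\rho^{2(\beta-1)}\dtau = \dfrac{U(\beta\vp)}{\rho^{\alpha\beta}}\rho^{2(\beta-1)}\dtau = \dfrac{U(\beta\vp)}{\rho^{\alpha\beta - 2\beta + 2}}\dtau$. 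Writing this as $\beta^2\cdot\dfrac{U(\beta\vp)/\beta^2}{\rho^{2-\beta(2-\alpha)}}\dtau = \beta^2\tilde V(y)\dtau$ gives exactly the claimed $\tilde U$ and $\tilde\alpha$. Integrating over $[a,b]$, which by the stated relation corresponds to $[a',b']$, yields the identity.

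I would also note for completeness that $\tilde U$ is $2\pi\beta^{-1}$-periodic in general, but the lemma only asserts the identity of integrals, so no periodicity claim on $\tilde U$ is needed beyond what is stated; one simply observes that $\tilde U \in \cont^2$ and positive wherever $U$ is, which is immediate. The verification that $\rho>0$ throughout (so all fractional powers are well defined) is guaranteed by the hypothesis $\min_\tau\rho>0$, and correspondingly $\min_t r>0$.

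The computation is entirely routine; there is no real obstacle, only bookkeeping of exponents. The one place to be careful is getting the exponent in $\dt = \rho^{2(\beta-1)}\dtau$ right and then checking that it cancels the $\rho^{2(1-\beta)}$ produced in the kinetic term and combines correctly with $r^{-\alpha} = \rho^{-\alpha\beta}$ in the potential term — a single sign error in the exponent would break the cancellation. So the main thing to watch is consistency of the power $2(1-\beta)/\beta$ appearing in the time change versus the powers $\beta$ and $\beta^2$ appearing in the radial and kinetic scalings.
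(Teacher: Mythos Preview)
Your proof is correct and follows essentially the same approach as the paper's: a direct substitution tracking the kinetic and potential terms separately under the change of variables, with the same cancellation of the $\rho^{2(1-\beta)}$ factor against $\dt=\rho^{2(\beta-1)}\dtau$. The only cosmetic difference is that you first isolate $\dot r$ and $\dot\vt$ explicitly, whereas the paper factors $(\d\tau/\dt)^2$ out of $|\dot x|^2$ in one step; the arithmetic is identical.
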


\begin{proof}
By direct computation we have
\[
V(x)=\frac{U(\vt)}{r^\alpha}=\frac{U(\beta\vp)}{\rho^{\alpha\beta}}
\]
and
\[
\begin{split}
|\dot x|^2 & = \dot r^2+r^2\dot\vt^2
= \beta^2\rho^{2\beta-2}\dot \rho^2 + \beta^2\rho^{2\beta}\dot \vp^2 \\
& = \beta^2\rho^{2\beta-2} \left[(\rho')^2+\rho^2(\vt')^2\right]\left(\frac{\dtau}{\dt}\right)^2
= \beta^2\rho^{2(1-\beta)}|y'|^2.
\end{split}
\]
Substituting in the action we obtain
\[
\int_{a}^{b} \frac12 |\dot x|^2 + V(x)\dt =
 \beta^2\int_{a'}^{b'}
 \left(\rho^{2(1-\beta)} \frac{|y'|^2}{2} + \frac{U(\beta\vp)/\beta^2}{\rho^{\alpha\beta}}\right)
 \cdot \rho^{-2(1-\beta)} \dtau. \qedhere
\]
\end{proof}
\begin{remark}\label{rem:regU}
It is immediate to show that if $U \in \Uh_{\vt_1 \vt_2}$, $\vt^{\pm} \in \Theta_{\vt_1 \vt_2}$,
and $\tilde U$ is defined as in the previous lemma,
then $\tilde U \in \Uh_{\frac{\vt_1}{\beta} \frac{\vt_2}{\beta}}$
and $\frac{\vt^{\pm}}{\beta} \in \Theta_{\frac{\vt_1}{\beta} \frac{\vt_2}{\beta}}$.
\end{remark}
We are in a position to conclude the proof of Theorem \ref{theo:main}. This is
done through the following proposition.
\begin{proposition}
Let $2h\pi < \vt^+ - \vt^- \leq 2(h+1)\pi$ for some $h\geq 1$ and let us define
\[
\tilde{\vt}^{\pm} = \frac{\vt^{\pm}}{h+1} \text{ and }
\tilde U(\vt) = \frac{U((h+1)\vt)}{(h+1)^2}.
\]
Then $\pi < \tilde\vt^+ - \tilde\vt^- \leq 2\pi$ and
\[
\bar \alpha (\vt^-,\vt^+,U) =
2-\frac{2-\bar \alpha(\tilde\vt^-,\tilde\vt^+,\tilde U)}{h+1},
\]
the latter being well defined by Section \ref{sec:sector}.
\end{proposition}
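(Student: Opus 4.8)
The plan is to apply the conformal change of variables of Lemma~\ref{lem:conf} with the specific choice $\beta = h+1$, and then track how both sides of the correspondence behave. First I would observe that the scaling $\tilde\vt^\pm = \vt^\pm/(h+1)$ immediately gives $\tilde\vt^+ - \tilde\vt^- = (\vt^+-\vt^-)/(h+1) \in (2h\pi/(h+1), 2\pi]$, and since $h\geq 1$ we have $2h\pi/(h+1) \geq \pi$, so indeed $\pi < \tilde\vt^+ - \tilde\vt^- \leq 2\pi$; moreover Remark~\ref{rem:regU} guarantees $\tilde U \in \Uh$ with the rescaled $\vt_1,\vt_2$ and $\tilde\vt^\pm \in \Theta$ for the rescaled lattice, so that $\bar\alpha(\tilde\vt^-,\tilde\vt^+,\tilde U)$ is meaningful and, by the results of Section~\ref{sec:sector} (the case $\pi < \tilde\vt^+-\tilde\vt^- \leq 2\pi$ of Theorem~\ref{theo:main}), is already characterized.

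Next I would set up the precise dictionary between parabolic trajectories for $(U,\alpha)$ asymptotic to $\vt^-,\vt^+$ and parabolic trajectories for $(\tilde U,\tilde\alpha)$ asymptotic to $\tilde\vt^-,\tilde\vt^+$, where $\tilde\alpha = 2 - \beta(2-\alpha) = 2 - (h+1)(2-\alpha)$. The map $(r,\vt) \mapsto (\rho,\vp) = (r^{1/\beta}, \vt/\beta)$ together with the time reparametrization $\tau = \int r^{2(1-\beta)/\beta}\dt$ is a bijection that, by Lemma~\ref{lem:conf}, multiplies the action by the constant $\beta^2$; since $J$ (or $\action$) is what defines both parabolic trajectories and Morse minimizers, this bijection carries parabolic trajectories to parabolic trajectories and free-time Morse minimizers to free-time Morse minimizers, in both directions. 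One must check that the transformation respects the defining conditions of Definition~\ref{defi:Morse_min}: $\min r > 0 \iff \min \rho > 0$ (clear, since $\beta > 0$), and $r(t) \to +\infty$, $\vt(t) \to \vt^\pm$ as $t \to \pm\infty$ translates exactly into $\rho(\tau) \to +\infty$, $\vp(\tau) \to \tilde\vt^\pm$ as $\tau \to \pm\infty$ (here one needs that the time change $\tau(t)$ is a genuine reparametrization of all of $\RR$ onto all of $\RR$, which follows since along an unbounded trajectory $r^{2(1-\beta)/\beta}$ is integrable near the pericenter but the integral diverges at both ends — this should be extractable from the phase-plane analysis of Section~\ref{sec:devaney}, as $\dt/\dtau$ behaves like a power of $r$). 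We also need $\tilde\alpha \in (0,2)$ to correspond to $\alpha \in (0,2)$: indeed $\tilde\alpha = 2-(h+1)(2-\alpha)$ is an increasing affine function of $\alpha$ sending $\alpha = 2$ to $2$ but sending small $\alpha$ to very negative values, so the correspondence is only a partial one — however this is harmless because we only care about the threshold value $\bar\alpha$, and the relation $\tilde\alpha = 2-(h+1)(2-\alpha)$ is monotone, hence invertible where it matters.

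The conclusion then follows by a threshold-matching argument: $(U,\alpha)$ admits a parabolic trajectory associated with $(\vt^-,\vt^+)$ if and only if $(\tilde U,\tilde\alpha)$ admits one associated with $(\tilde\vt^-,\tilde\vt^+)$, i.e. (by the first bullet of Theorem~\ref{theo:main}, already proved for the rescaled problem) if and only if $\tilde\alpha = \bar\alpha(\tilde\vt^-,\tilde\vt^+,\tilde U)$, i.e. $2-(h+1)(2-\alpha) = \bar\alpha(\tilde\vt^-,\tilde\vt^+,\tilde U)$, which rearranges precisely to $\alpha = 2 - \frac{2-\bar\alpha(\tilde\vt^-,\tilde\vt^+,\tilde U)}{h+1}$. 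By uniqueness of the exponent (Proposition~\ref{propo:unique_baralfa}) this value is $\bar\alpha(\vt^-,\vt^+,U)$, giving the displayed formula. I expect the main obstacle to be the careful verification that the time reparametrization $\tau(t)$ is a diffeomorphism of $\RR$ onto $\RR$ for the trajectories in question — in particular that no finite-time blow-up or premature stopping occurs — and, relatedly, checking that the asymptotic-direction conditions and the $\min r > 0$ condition transform correctly even when the trajectory passes through its pericenter; everything else is bookkeeping with Lemma~\ref{lem:conf}, Remark~\ref{rem:regU}, and the monotone substitution $\tilde\alpha \leftrightarrow \alpha$.
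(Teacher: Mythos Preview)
Your proposal is correct and follows essentially the paper's own route: both arguments rest on the conformal change of Lemma~\ref{lem:conf} with $\beta=h+1$, together with Remark~\ref{rem:regU}, to reduce to the case already handled in Section~\ref{sec:sector}. The only tactical difference is how the range restriction on $\alpha$ is handled. The paper first invokes the a~priori estimate of Lemma~\ref{lem:stimaalpha} (namely $\alpha\geq 2-2\pi/(\vt^+-\vt^-)>2-1/h$) to exclude small $\alpha$ outright, so that on the remaining range one checks directly that $\tilde\alpha=2-(h+1)(2-\alpha)>1-1/h\geq 0$ and the correspondence applies. You instead use the correspondence to \emph{produce} a parabolic trajectory at the predicted value of $\alpha$ and then appeal to the uniqueness statement of Proposition~\ref{propo:unique_baralfa} to rule out any other $\alpha$; this is equally valid, and arguably cleaner, since uniqueness has already been established independently of any range restriction. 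Your self-identified concern about $\tau(t)$ being a diffeomorphism of $\RR$ onto $\RR$ is legitimate but routine (along a parabolic arc $r(t)\sim|t|^{2/(2+\alpha)}$, so $r^{2(1-\beta)/\beta}$ is not integrable at infinity precisely when $\tilde\alpha>-2$, which certainly holds here), and the paper does not dwell on it either.
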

\begin{proof}
We have to show that $(U,\alpha)$, $\alpha \in (0,2)$, admits a parabolic Morse minimizer if and only if
$\alpha$ is equal to the r.h.s. of the expression above.
To start with we observe that, if
\[
\alpha \leq 2-\frac1h
\]
then $(U,\alpha)$ can not admit a parabolic Morse minimizer.
Indeed, on the contrary, Lemma \ref{lem:stimaalpha} would apply, yielding
\[
2-\frac{2\pi}{\vt^+-\vt^-} \leq \alpha,
\]
in contradiction with the fact that $\vt^+-\vt^- > 2h\pi$.
On the other hand, if $\alpha > 2-1/h$, we can apply
Lemma \ref{lem:conf} and Remark \ref{rem:regU},
obtaining that trajectories connecting $\vt^{\pm}$ with potential $(U,\alpha)$
correspond to trajectories connecting $\tilde\vt^{\pm}$ with potential $(\tilde U,\tilde \alpha)$,
with $\tilde \alpha = 2-(h+1)(2-\alpha)$.
As a consequence, in order to prove the proposition, we simply have to show that
the results of Section \ref{sec:sector} can be applied to this latter context.
To this aim, the only non-immediate thing to check is that $\tilde \alpha \in (0,2)$.
This is easily proved by monotonicity, since
\[
2-\frac1h <\alpha < 2 \quad \implies \quad 1-\frac{1}{h} < \tilde \alpha < 2. \qedhere
\]
\end{proof}
\section{Proof of Theorems \ref{theo:bolza} and \ref{theo:gordon}}\label{sec:other_proofs}
The strategy in the proof of both theorems is the following. To start with we assume by contradiction
the existence of a colliding minimizer and we study a class of constrained minimization problems,
restricting to the paths having distance from the origin at least $\eps$.
Next we let $\eps\to0$ and perform a blow-up procedure obtaining as a limit a global zero-energy path,
which connects two central configurations at $r \to \infty$ and solves the equation outside the constraint.
Finally, we obtain a contradiction to the existence of such a path by exploiting the results obtained in the previous sections.
To this last aim a crucial tool is given by the following lemma,
which is a generalization of Proposition
\ref{propo:no_para_in_strip} to fixed-time constrained minimizers
with $\vjump=0$ connecting (not necessarily minimal) central configurations.
\begin{lemma}\label{lemma:barriere}
Let $(U,\alpha)$ be fixed and $x=(r,\vt) \in H^1_{\mathrm{loc}}(\RR)$
be such that, for some $t_*\leq 0 \leq t_{**}$, it holds
\begin{itemize}
\item $x$ is $\cont^1$ and it is minimal under fixed-time variations;
\item $|x| \to \infty$ and $x/|x| \to \widetilde \vt^{\pm}$ as $t \to \pm \infty$;
\item $r(t)\equiv 1$ if and only if $t \in [t_*,t_{**}]$, $\dot r(t)<0$ (resp. $\dot r(t)>0$)
      if and only if  $t<t_*$ (resp. $t>t_{**}$);
\item $x$ solves \eqref{eq:dynsys} for $t \notin [t_*,t_{**}]$ and \eqref{eq:dynsys2} for every $t$;
\item there exist $\vt^{\pm}$ minimal central configurations such that
      $[\widetilde \vt^-,\widetilde \vt^+] \subset [\vt^-,\vt^+]$.
\end{itemize}
Then $\alpha \leq \bar\alpha(\vt^-,\vt^+,U)$.
\end{lemma}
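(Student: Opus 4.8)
The plan is to translate the hypotheses on $x$ into a statement about an orbit of the Devaney system \eqref{eq:dydthetaphi} and then invoke the barrier property of the apsidal trajectories established in Section \ref{sec:devaney}. First I would apply the change of variables of Section \ref{sec:devaney} to $x=(r,\vt)$, obtaining a curve $\tau\mapsto(\vt(\tau),\vp(\tau))$ that solves \eqref{eq:dydthetaphi} on each of the three regimes $\{t<t_*\}$, $\{t_*\le t\le t_{**}\}$ (if nonempty), $\{t>t_{**}\}$, using that $x$ solves \eqref{eq:dynsys}, \eqref{eq:dynsys2} outside the constraint and that $r\equiv1$ on $[t_*,t_{**}]$. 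Since $|x|\to\infty$ with $x/|x|\to\widetilde\vt^\pm$, Theorem \ref{thm:devaney} (condition \eqref{eq:cond_cos}) forces $\cos(\vp-\vt)\to\pm1$, i.e. the limiting equilibria are $(\widetilde\vt^-,\widetilde\vt^-+\pi)$ (odd multiple, since $v(-\infty)=-\sqrt{U_{\min}}$) and $(\widetilde\vt^+,\widetilde\vt^+)$ (even multiple), exactly as in Corollary \ref{coro:as}; because $\widetilde\vt^\pm$ are (by hypothesis via $[\widetilde\vt^-,\widetilde\vt^+]\subset[\vt^-,\vt^+]$ and the last bullet) central configurations, these are saddles. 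As in Remark \ref{rem:anotherfuckingkeyremark}, the sign conditions on $\dot r$ give that on $\{t<t_*\}$ the curve runs along the unstable trajectory out of $(\widetilde\vt^-,\widetilde\vt^-+\pi)$ with $\vt$ increasing, staying in $\{\vt+\pi/2<\vp<\vt+\pi\}$, that on $\{t>t_{**}\}$ it runs along the stable trajectory into $(\widetilde\vt^+,\widetilde\vt^+)$ staying in $\{\vt<\vp<\vt+\pi/2\}$, and that on the middle interval (if present) $(\vt,\vp)$ lies on the line $\vp=\vt+\pi/2$.

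Next I would argue by contradiction: suppose $\alpha>\bar\alpha(\vt^-,\vt^+,U)$, so that by Definition \ref{defi:baralfa} $\hat\vt^-(\alpha)>\hat\vt^+(\alpha)$, where $\hat\vt^\pm$ are the apsidal angles of Lemmata \ref{lem:unst_man_control}, \ref{lem:st_man_control} relative to the \emph{outer} endpoints $\vt^-<\vt^+$. The whole orbit of $(\vt,\vp)$ is contained in the strip $\{\vt^-\le\vt\le\vt^+\}$: indeed $\widetilde\vt^-\ge\vt^-$, $\widetilde\vt^+\le\vt^+$, $\vt$ is monotone increasing on the three pieces, and it cannot reach $\vt^\pm$ in the interior since those are themselves equilibria of \eqref{eq:dydthetaphi} (as $\vt^\pm$ are central configurations) and hence act as vertical barriers $\vt\equiv\vt^\pm$ for any other trajectory. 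Now I apply Corollary \ref{coro:barriere} with the central configuration $\vt^*=\widetilde\vt^-$: the piece of orbit emanating from $(\widetilde\vt^-,\widetilde\vt^-+\pi)$ must \emph{exit} the set $\Sigma$ (it reaches $\vp=\vt+\pi/2$ at $\tau_*$, or earlier if the middle interval is empty), so it exits through a point of $\{\hat\vt^-(\alpha)\le\vt\le\vt^+,\ \vp=\vt+\pi/2\}$ or through $\vt=\vt^\pm$; since it stays in $\vt<\vt^+$ and does not hit $\vt^\pm$, it exits with $\vt\ge\hat\vt^-(\alpha)$ on the line $\vp=\vt+\pi/2$. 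Symmetrically, the piece entering $(\widetilde\vt^+,\widetilde\vt^+)$ must \emph{enter} $\Sigma'$, hence through $\{\vt^-\le\vt\le\hat\vt^+(\alpha),\ \vp=\vt+\pi/2\}$ (again the vertical lines are excluded), so it reaches $\vp=\vt+\pi/2$ with $\vt\le\hat\vt^+(\alpha)$. But $x$ is $\cont^1$ and meets $\vp=\vt+\pi/2$ along a single (possibly degenerate) monotone segment on that line, traversed with $\vt$ nondecreasing; comparing the two endpoints of that segment gives $\hat\vt^-(\alpha)\le\hat\vt^+(\alpha)$, contradicting $\hat\vt^-(\alpha)>\hat\vt^+(\alpha)$. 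Therefore $\alpha\le\bar\alpha(\vt^-,\vt^+,U)$.

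The main obstacle I anticipate is the bookkeeping needed to pass from the \emph{inner} endpoints $\widetilde\vt^\pm$ (where the actual orbit has its asymptotics) to the \emph{outer} apsidal angles $\hat\vt^\pm(\alpha)$ defined for $\vt^-,\vt^+$: one must be careful that Corollary \ref{coro:barriere} is being invoked with the correct sector $[\vt^-,\vt^+]$ and that the orbit cannot escape through the vertical lines $\vt=\vt^\pm$, which requires knowing $\vt^\pm$ are equilibria (equivalently, central configurations, which is the content of the last hypothesis). A secondary technical point is handling uniformly the three structural alternatives for the constraint interval $[t_*,t_{**}]$ — nonempty (a $\cont^1$ plateau $\dot r\equiv0$), or a single point — but in all cases the $\cont^1$ hypothesis and the monotonicity of $v$ along \eqref{eq:dydthetaphi} (equation \eqref{eq:v'}) guarantee the portion on $\vp=\vt+\pi/2$ is a single monotone segment, which is all that is needed for the final comparison.
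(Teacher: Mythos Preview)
Your approach is essentially the paper's: project to the Devaney plane, split the orbit into the unstable arc out of $(\widetilde\vt^-,\widetilde\vt^-+\pi)$, a (possibly degenerate) segment on $\vp=\vt+\pi/2$, and the stable arc into $(\widetilde\vt^+,\widetilde\vt^+)$; then use Corollary~\ref{coro:barriere} to get $\vt_A\ge\hat\vt^-(\alpha)$ and $\vt_B\le\hat\vt^+(\alpha)$ for the two crossing points, and the $\cont^1$ junction plus minimality to get $\vt_A\le\vt_B$, whence $\hat\vt^-(\alpha)\le\hat\vt^+(\alpha)$ and $\alpha\le\bar\alpha$. The paper argues directly rather than by contradiction, but the content is identical.

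One correction, though it does not break the proof: the vertical lines $\vt=\vt^\pm$ are \emph{not} invariant for \eqref{eq:dydthetaphi} (only the isolated points $(\vt^\pm,\vt^\pm+h\pi)$ are equilibria), so your reason for excluding vertical-line exits is wrong. The exclusion is not needed anyway: the inequality $\vt_A\ge\hat\vt^-(\alpha)$ comes from the fact that the unstable arc from $(\widetilde\vt^-,\widetilde\vt^-+\pi)$ cannot cross the barrier trajectory emanating from $(\vt^-,\vt^-+\pi)$ (this is the real content of Corollary~\ref{coro:barriere}), and we already know from $\dot r(t_*)=0$ that the arc \emph{does} reach $\vp=\vt+\pi/2$.
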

\begin{proof}
Reasoning as in Remark \ref{rem:anotherfuckingkeyremark},
we can project $x$ to the Devaney's phase plane. As usual, the corresponding graph consists
in the junction of three arcs in the strip:
the part of an unstable trajectory emanating from (say) $(\widetilde\vt^-, \widetilde\vt^-+\pi)$
up to $A$, its crossing point with the straight line $\varphi=\vt+\pi/2$;
the arc of the stable manifold entering
in $(\widetilde\vt^+, \widetilde\vt^+)$ back to $B$, its crossing point with the same straight line;
a segment joining the two crossings, which is traveled monotonically in $\vt$ by minimality.
Since $\vt$ must be $\cont^1$ across the whole junction, and trajectories of \eqref{eq:dydthetaphi}
cross the line $\varphi=\vt+\pi/2$ with increasing $\vt$, we infer that
\[
\vt_A \leq \vt_B.
\]
On the other hand since the whole junction is completely contained in the strip $[\vt^-,\vt^+]$,
Corollary \ref{coro:barriere} implies that
\[
\vt_A \geq \hat\vt^-(\alpha), \qquad
\vt_B \leq \hat\vt^+(\alpha),
\]
and the conclusion follows from the definition of $\bar \alpha$.
\end{proof}
\begin{remark}\label{rem:baralpha}
In the previous lemma $\alpha =\bar \alpha$ forces $\vt_A=\vt_B$ and hence
$\widetilde\vt^{\pm} = \vt^{\pm}$.
\end{remark}
\begin{proof}[Proof of Theorem \ref{theo:bolza}]
Taking advantage of the conformal
equivariance of the problem, arguing as in Section \ref{sec:sector} we can
reduce to the case $\vt^+ \leq \vt^-+2\pi$. We argue by contradiction, assuming that
for some $x_1=(r_1,\vp_1)$, $x_2=(r_2,\vp_2)$ in the sector $[\vt^-,\vt^+]$ and $t_1<t_2$ there exists a
Bolza minimizer completely contained in the sector and traveling through the origin.
As we did in Definition \ref{defi:constr_Morse_min}
for Morse minimizers, we can introduce the notion of constrained Bolza ones. More precisely,
let us consider the set of paths within the sector having the required endpoints:
\[
\Gamma:=\left\{x=(r,\vt):\,r(t_i)=r_i,\,\vt(t_i)=\vp_i,\,\vt(t)\in[\vt^-,\vt^+]
\text{ for }t\in[t_1,t_2]\right\};
\]
next we consider a small parameter $\eps>0$ and we compare the values of the two
following constrained minimization problems: the one featuring equality constraint
\begin{equation*}
c_\eps^c:=\min\{\action(t_1,t_2;x):\,x\in\Gamma\text{ and }\min_{[t_1,t_2]}r(t)=\eps\}
\end{equation*}
with the obstacle-type problem
\begin{equation*}
c_\eps^c:=\min\{\action(t_1,t_2;x):\,x\in\Gamma\text{ and }\min_{[t_1,t_2]}r(t)\geq\eps\}
\end{equation*}
(it is standard to prove that they are both achieved).
Of course, $c_\eps$ is non decreasing in $\eps$ and $c_\eps\leq c_\eps^c$, $\forall\eps>0$.
Arguing as in the proof of Theorem 18 in \cite{TerVen07}, if $c_\eps< c_\eps^c$
for every small positive $\eps$, then we are done. Hence, we can reduce our analysis to
the case of a vanishing sequence $\eps_n\to 0$ with $c_{\eps_n}=c_{\eps_n}^c$
and such that the two constrained minimization problems share the same class of minimizers.
Let us take a sequence $x_n$ of such minimizers: they can interact with the constraints in essentially two ways.
On one hand, they are $\cont^1$ when they touch the lines $\vt = \vp_i$; one the other hand, concerning the
circular constraint as in Section \ref{sec:sector}
we may have either $\vjump(x_n)>0$ or $\vjump(x_n)=0$
(it can be shown that the classification in terms of position and velocity jumps holds also
for fixed-time minimizers, at least for $\eps$ small, see also \cite{BTV}, Proposition 3.6).
It is immediate to rule out the case $\vjump(x_n)>0$, because a local variation can be easily
produced in contradiction with the fact that $c_{\eps_n}=c_{\eps_n}^c$.
Following again the argument of the proof of Proposition 20 of \cite{TerVen07},
one sees that the energies are uniformly bounded along the sequence.
Defining the blow-up sequence
\[
\hat x_n(t) = \frac{1}{\eps_n}x_n(\eps_n^{-\frac{2+\alpha}{2}}t)
\]
we can argue as in \cite{TerVen07} (pages 486--488) to pass to the limit and find a zero-energy $\cont^1$-path,
minimal under fixed-time variations for the homogeneous potential $(U,\alpha)$.
We observe that such paths can not touch the lines $\vt=\vp_i$: indeed, it would be a $\cont^1$ junction,
in contradiction to the uniqueness for Cauchy problems.
As a consequence the blow-up limit consists of a pair of parabolic arcs,
connected by a circular arc, within the sector $(\vt^-,\vt^+)$.
The two parabolic arcs have ingoing and outgoing asymptotic central configurations
$\widetilde\vt^-$, $\widetilde\vt^+$ such that $\vt^- \leq \widetilde\vt^-<\widetilde\vt^+ \leq \vt^+$.
Since $\alpha >\bar \alpha$ this contradicts Lemma \ref{lemma:barriere}.
\end{proof}
\begin{remark}\label{rem:baralpha2}
The previous proof, together with Remark \ref{rem:baralpha}, immediately
provides Proposition \ref{propo:1.5}. Moreover, it is possible to show that,
if $\alpha=\bar\alpha$, then any Bolza minimizer within the sector either
is collisionless or it collides with ingoing/outgoing directions precisely
$\vt^-$ and $\vt^+$.
\end{remark}
\begin{proof}[Proof of Theorem \ref{theo:gordon}]
First of all we can take advantage of the conformal invariance to reduce to the case $k=1$.
Next we set again the constrained minimization problems over the set of loops winding one time
around the origin:
\begin{equation*}
c_\eps^c(\alpha,U)=\min\{\action(0,T;x)\;;\;x(0)=x(T)\;,\;\text{deg}(x,0)=1\;\text{and}\; \min_{[0,T]}r(t)=\eps\}
\end{equation*}
(here $\text{deg}(x,0)$ denotes the topological degree of the map $x$).  We also set
\[
c^c=\liminf_{\eps\to 0}c_\eps^c.
\]
It is easy to prove that a minimizing periodic trajectory in this class corresponds to a simple loop.
We remark that, under the previous notation, our aim is to prove that there exists $\eps>0$ such that
$c_\eps^c < c^c$.
This will be done in two steps.\\
{\em Step 1: If every maximum of $U$ satisfies condition \eqref{eq:too_strict}
then there exists $\eps>0$ such that $c_\eps^c(\alpha,U)\leq c^c$}.
Indeed, if not, we would have $c_\eps^c> c^c$ for all positive $\eps$ and
hence, for every small $\eps_2>0$, we can find a smaller $\eps_1$ such that
\[
c_{\eps_1,\eps_2}=\min\{\action(0,T;x)\;;\;x(0)=x(T)\;,\;\text{deg}(x,0)=1\;\text{and}\; \min_{[0,T]}r(t)\in[\eps_1,\eps_2)\}
\]
is achieved. In this way, we find the existence of a fixed--time constrained minimizing trajectory with $\vjump=0$.
Reasoning again as in \cite{TerVen07}, letting $\eps_2\to 0$ and going to a blow--up sequence,
we find in the limit a parabolic fixed--time constrained minimizing trajectory with $\vjump=0$.
Now we look at its asymptotic central configurations and we go to the phase plane.
We have to deal with the case when the corresponding trajectory connects a pair of
stationary points $(\widetilde\vt^-, \widetilde\vt^-+\pi)$ and $(\widetilde\vt^+,\widetilde\vt^+)$ and,
by the absence of self intersections,  we infer $\widetilde\vt^+\leq \widetilde\vt^-+2\pi$.
Now, if $\widetilde \vt^-$ is a maximum for $U$, then
thanks to Corollary \ref{coro:maximal_not_minimal}, we reach a contradiction.
On the other hand, if $\widetilde \vt^-$ is a minimum, we can apply Lemma \ref{lemma:barriere}
with $[\vt^-,\vt^+]:=[\widetilde\vt^-,\widetilde\vt^-+2\pi]$ and obtain a contradiction
with the fact that $\alpha>\bar \alpha(\widetilde\vt^-,\widetilde\vt^-+2\pi,U)$.

\noindent
{\em Step 2: if $U$ and $\widetilde U$ share the same global minimizers, at the same level $U_{min}$,
then $c^c(\alpha,U)=c^c(\alpha,\widetilde U)$}.
Indeed let $(r(t),\vt(t))$ achieve $c^c(\alpha,U)$, then also $(r(t),\vt^*)$, for any $\vt^*$ minimal configuration for $U$,
achieves the same level. On this last path the actions with potentials $U$ and $\widetilde U$ coincide.
Therefore $c^c(\alpha,\widetilde U) \leq c^c(\alpha,U)$; the claim follows by exchanging the roles of $U$ and $\widetilde U$.

\noindent
{\em Step 3: conclusion}.
Let $U$ satisfy the assumptions of the theorem.
We can always construct another Morse potential $\widetilde U\in\Uh$, still satisfying \eqref{eq:alfamax},
such that $\min \widetilde{U}= \min U$, $\widetilde U \geq U$, $\widetilde U\neq U$ and, last but not least,
$\widetilde U$ satisfies \eqref{eq:too_strict}.
Now, by Step 1, there exists $\eps>0$ such that $c^c_\eps(\alpha,\widetilde{U}) \leq c^c(\alpha,\widetilde{U})$,
the former being achieved by a collisionless loop $\widetilde{x}$.
Evaluating the action relative to $U$ along $\widetilde{x}$ we obtain
\[
c^c_\eps(\alpha,U) < c^c_\eps(\alpha,\widetilde{U}) \leq  c^c(\alpha,U),
\]
as was to be shown.
\end{proof}

%

\end{document}